\title{Orbit equivalence rigidity of irreducible actions of right-angled Artin groups}
\author{Camille Horbez, Jingyin Huang and Adrian Ioana}
\begin{document}

\maketitle 

\newtheorem{de}{Definition} [section]
\newtheorem{theo}[de]{Theorem} 
\newtheorem{prop}[de]{Proposition}
\newtheorem{lemma}[de]{Lemma}
\newtheorem{cor}[de]{Corollary}
\newtheorem{propd}[de]{Proposition-Definition}
\newtheorem{conj}[de]{Conjecture}
\newtheorem{claim}{Claim}
\newtheorem*{claim2}{Claim}
\newtheorem{theointro}{Theorem}
\newtheorem*{defintro}{Definition}
\newtheorem{corintro}[theointro]{Corollary}
\newtheorem{questionintro}[theointro]{Question}
\newtheorem{propintro}[theointro]{Proposition}

\theoremstyle{remark}
\newtheorem{rk}[de]{Remark}
\newtheorem{ex}[de]{Example}
\newtheorem{question}[de]{Question}

\normalsize

\newcommand{\Aut}{\mathrm{Aut}}
\newcommand{\Out}{\mathrm{Out}}
\newcommand{\Inn}{\mathrm{Inn}}
\newcommand{\stab}{\operatorname{Stab}}
\newcommand{\pstab}{\operatorname{Pstab}}
\newcommand{\dunion}{\sqcup}
\newcommand{\eps}{\varepsilon}
\renewcommand{\epsilon}{\varepsilon}
\newcommand{\calf}{\mathcal{F}}
\newcommand{\cali}{\mathcal{I}}
\newcommand{\caly}{\mathcal{Y}}
\newcommand{\calx}{\mathcal{X}}
\newcommand{\calz}{\mathcal{Z}}
\newcommand{\calo}{\mathcal{O}}
\newcommand{\calb}{\mathcal{B}}
\newcommand{\calq}{\mathcal{Q}}
\newcommand{\calu}{\mathcal{U}}
\newcommand{\call}{\mathcal{L}}
\newcommand{\bbR}{\mathbb{R}}
\newcommand{\bbZ}{\mathbb{Z}}
\newcommand{\bbD}{\mathbb{D}}
\newcommand{\NT}{\mathrm{NT}}
\newcommand{\cat}{\mathrm{CAT}(-1)}
\newcommand{\CAT}{\mathrm{CAT}(0)}
\newcommand{\actson}{\curvearrowright}
\newcommand{\caln}{\mathcal{N}}
\newcommand{\calg}{\mathcal{G}}
\newcommand{\Prob}{\mathrm{Prob}}
\newcommand{\calt}{\mathcal{T}}
\newcommand{\calc}{\mathcal{C}}
\newcommand{\adm}{\mathrm{adm}}
\newcommand{\cala}{\mathcal{A}}
\newcommand{\cals}{\mathcal{S}}
\newcommand{\calh}{\mathcal{H}}
\newcommand{\Stab}{\mathrm{Stab}}
\newcommand{\bdd}{\mathrm{bdd}}
\newcommand{\calp}{\mathcal{P}}
\newcommand{\Fix}{\mathrm{Fix}}
\newcommand{\cald}{\mathcal{D}}
\newcommand{\rad}{r}
\newcommand{\good}{\mathrm{good}}
\newcommand{\Faces}{\mathrm{Faces}}
\newcommand{\del}{\mathbb{D}_\Gamma}
\newcommand{\Mod}{\mathrm{Mod}}
\newcommand{\Comm}{\mathrm{Comm}}
\newcommand{\si}{\sigma}
\newcommand{\st}{\mathrm{st}}
\newcommand{\lk}{\mathrm{lk}}
\newcommand{\supp}{\mathrm{supp}}
\newcommand{\HHS}{\mathrm{HHS}}
\newcommand{\Creg}{\mathcal{C}_{\mathrm{reg}}}
\newcommand{\reg}{\mathrm{reg}}
\newcommand{\calk}{\mathcal{K}}
\newcommand{\calr}{\mathcal{R}}
\newcommand{\cale}{\mathcal{E}}
\newcommand{\bddc}{\mathrm{H}^2_{\mathrm{b}}}
\newcommand{\Ufin}{\mathcal{U}_{\mathrm{fin}}}
\newcommand{\Ginv}{\mathcal{G}_{\mathrm{inv}}}

\makeatletter

\edef\@tempa#1#2{\def#1{\mathaccent\string"\noexpand\accentclass@#2 }}

\@tempa\rond{017}

\makeatother

\begin{abstract}
Let $G_\Gamma\actson X$ and $G_\Lambda\actson Y$ be two free measure-preserving actions of one-ended right-angled Artin groups with trivial center on standard probability spaces. Assume they are \emph{irreducible}, i.e.\ every element from a standard generating set acts ergodically. We prove that if the two actions are stably orbit equivalent (or merely stably $W^*$-equivalent), then they are automatically conjugate through a group isomorphism between $G_\Gamma$ and $G_\Lambda$. Through work of Monod and Shalom, we derive a superrigidity statement: if the action $G_\Gamma\actson X$ is stably orbit equivalent (or merely stably $W^*$-equivalent) to a free, measure-preserving, mildly mixing action of a countable group, then the two actions are virtually conjugate.  We also use works of Popa and Ioana-Popa-Vaes to establish the $W^*$-superrigidity of Bernoulli actions of all ICC groups having a finite generating set made of infinite-order elements where two consecutive elements commute, and one has a nonamenable centralizer: these include one-ended non-abelian right-angled Artin groups, but also many other Artin groups and most mapping class groups of finite-type surfaces. 
\end{abstract}

\section*{Introduction}

A main goal in measured group theory, initiated by work of Dye \cite{Dye}, is to classify measure-preserving group actions on standard probability spaces up to \emph{orbit equivalence}, i.e.\ up to the existence of a measure space isomorphism sending orbits to orbits. More generally, we will be interested in \emph{stable orbit equivalence} of actions of countable groups, defined as follows: two free, ergodic, measure-preserving actions $G\actson X$ and $H\actson Y$ by Borel automorphisms on standard probability spaces are \emph{stably orbit equivalent} if there exist positive measure Borel subsets $U\subseteq X$ and $V\subseteq Y$, and a measure-scaling isomorphism $f:U\to V$, such that for every $x\in U$, one has $f((G\cdot x)\cap U)=(H\cdot f(x))\cap V$. 

A first striking result in this theory was the proof by Ornstein and Weiss \cite{OW}, building on Dye's work, that any two free, ergodic, probability measure-preserving actions of countably infinite amenable groups are orbit equivalent. 

Later, Gaboriau used the notion of cost (introduced by Levitt in \cite{Lev}) to distinguish actions of free groups of different ranks \cite{Gab-cost}, and showed that $\ell^2$-Betti numbers also provide useful invariants for the classification \cite{Gab-l2}.

In contrast to the Ornstein--Weiss theorem exhibiting a wide class of groups that are indistinguishable from the viewpoint of orbit equivalence, several strong rigidity results have then been obtained for various classes of groups, like higher rank lattices (Furman \cite{Fur-me,Fur-oe}), mapping class groups (Kida \cite{Kid,Kid-oe}) and related groups (e.g.\ \cite{CK}), certain large type Artin groups \cite{HH1} or $\mathrm{Out}(F_N)$ with $N\ge 3$ (as proved by Guirardel and the first named author in \cite{GH}). Interestingly, negative curvature features of the groups under consideration are often key ingredients in the proofs of orbit equivalence rigidity of their ergodic actions. 

Other rigidity phenomena were discovered by Monod and Shalom \cite{MS}, who proved superrigidity-type results for \emph{irreducible} actions of direct products of free groups, or more generally of direct products $G_1\times\dots\times G_k$, with $k\ge 2$, where $\bddc(G_i,\ell^2(G_i))\neq 0$ for every $i\in\{1,\dots,k\}$ (this condition on the bounded cohomology can be viewed as an analytical form of negative curvature). The crucial \emph{irreducibility} assumption means that every factor $G_i$ acts ergodically on $X$. 

In yet another direction, Popa obtained orbit equivalence rigidity results for Bernoulli actions of all property (T) groups \cite{Pop-T}, and all nonamenable groups that split as direct products or have an infinite center \cite{Pop}; these results were obtained in the framework of Popa's deformation/rigidity theory, and their proofs exploit a specific property of Bernoulli actions called \emph{malleability}, rather than geometric properties of the acting group.

In \cite{HH2}, we started to investigate the class of right-angled Artin groups from the viewpoint of measured group theory. These groups are of basic importance (see e.g. \cite{charney2007introduction,Wise}) and have a very simple definition: given a finite simple graph $\Gamma$ (i.e.\ with no loop-edge and no multiple edges between two vertices), the \emph{right-angled Artin group} $G_\Gamma$ is defined by the following presentation: it has one generator per vertex of $\Gamma$, and relations are given by commutation of any two generators whose associated vertices are joined by an edge. 

On the rigidity side, we proved in \cite{HH2} that if two right-angled Artin groups $G_\Gamma,G_\Lambda$ with finite outer automorphism groups admit free, ergodic, measure-preserving actions on standard probability spaces which are orbit equivalent, or merely stably orbit equivalent (equivalently if the groups are measure equivalent), then  $G_\Gamma$ and $G_\Lambda$ are isomorphic. However, rigidity fails beyond this context: given any right-angled Artin group $G_\Gamma$, and any group $H$ which is a graph product of countably infinite amenable groups over the same graph $\Gamma$, we can build free, ergodic, probability measure-preserving actions of $G_\Gamma$ and $H$ which are orbit equivalent \cite[Proposition~4.2]{HH2}. In fact, our proof of \cite[Proposition~4.2]{HH2} shows that starting from any action $G_\Gamma\actson Z$ as above, we can find a blown-up action $G_\Gamma\actson\hat{Z}$ (i.e.\ coming with a $G_\Gamma$-equivariant map $\hat{Z}\to Z$) which fails to be superrigid for orbit equivalence. We can also build two actions of $G_\Gamma$ which are orbit equivalent but not conjugate \cite[Remark~4.4]{HH2}.    

The goal of the present paper is to show that rigidity can be achieved if one restricts to a certain class of actions satisfying more restrictive ergodicity conditions, as in the following definition.  

\begin{defintro}
Let $G$ be a right-angled Artin group. A free, probability measure-preserving action of $G$ on a standard probability space $X$ is \emph{irreducible} if there exist a finite simple graph $\Gamma$ and an isomorphism between $G$ and the right-angled Artin group $G_\Gamma$ such that, through this isomorphism, every standard generator of $G_\Gamma$ (associated to a vertex of $\Gamma$) acts ergodically on $X$. 
\end{defintro}

The above definition is a natural extension of Monod and Shalom's irreducibility condition to the context of right-angled Artin groups (and could be naturally extended to graph products). Examples of irreducible actions of right-angled Artin groups include Bernoulli actions (considered in Theorem~\ref{theointro:Bernoulli} below) and Gaussian actions associated to mixing orthogonal representations (introduced by Connes and Weiss in \cite{CW}, see also \cite[Section~2.1]{PS} for a detailed study). To build other examples, one can start with a discrete and faithful representation of a right-angled Artin group into $\mathrm{SL}(n,\mathbb{R})$ or even $\mathrm{SL}(n,\mathbb{Z})$ (see \cite{Wan} for examples). Using this embedding in $\mathrm{SL}(n,\mathbb{R})$, one can then consider the restriction of a mixing action of a closed subgroup of $\mathrm{SL}(n,\mathbb{R})$ on a homogeneous space, coming from the Howe--Moore theorem \cite{HM}, see e.g.\ \cite[Corollary~2.5]{Bek}. Our main theorem is the following.

\begin{theointro}\label{theointro:strong-rigidity}
Let $G$ and $H$ be two one-ended right-angled Artin groups with trivial center. Let $G\actson X$ and $H\actson Y$ be two free irreducible measure-preserving actions by Borel automorphisms on standard probability spaces. 

If the actions $G\actson X$ and $H\actson Y$ are stably orbit equivalent (or merely stably $W^*$-equivalent\footnote{i.e.\ their associated von Neumann algebras $L^\infty(X)\rtimes G$ and $L^\infty(Y)\rtimes H$, defined via Murray and von Neumann's \emph{group measure space construction} \cite{MvN}, have isomorphic amplifications}), then they are conjugate, i.e.\ there exist a group isomorphism $\alpha:G\to H$ and a measure space isomorphism $f:X\to Y$ such that for every $g\in G$ and almost every $x\in X$, one has $f(gx)=\alpha(g)f(x)$.
\end{theointro}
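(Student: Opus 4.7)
My plan has two main parts. First, I would reduce stable $W^*$-equivalence to stable orbit equivalence, and extract a measurable cocycle. For actions of one-ended centerless RAAGs, acylindrical hyperbolicity of $G$ and $H$ on their Kim--Koberda extension graphs, combined with irreducibility of the actions, should force uniqueness of the Cartan subalgebra in the crossed product via Popa--Vaes or Chifan--Ioana--Osin type deformation-rigidity techniques. A stable $W^*$-isomorphism is then forced to intertwine Cartan subalgebras up to unitary conjugacy, so that the two actions are in fact stably orbit equivalent. Standard measured group theory then produces a Zimmer cocycle $\sigma: G \times X \to H$, and it suffices to show $\sigma$ is cohomologous to a group homomorphism $\alpha: G \to H$, since by symmetry $\alpha$ will be an isomorphism and straightening $\sigma$ by $\alpha$ yields the desired conjugacy of actions.

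Second, and this is the heart of the proof, I would establish the cocycle superrigidity itself. Let $\Gamma$ denote the defining graph of $G$. For every vertex $v\in V(\Gamma)$ with nonempty link, the centralizer splits as $C_G(g_v)=\langle g_v\rangle\times G_{\lk(v)}$. Irreducibility says that $g_v$ acts ergodically on $X$, and $G_{\lk(v)}$ also acts ergodically since it contains (ergodic) standard generators; hence $C_G(g_v)$ acts irreducibly on $X$ in the Monod--Shalom sense. A direct appeal to Monod--Shalom nevertheless fails, because the factor $\langle g_v\rangle=\bbZ$ has trivial bounded cohomology with $\ell^2$ coefficients. Instead I would combine the ergodicity of $g_v$ with the acylindrically hyperbolic action of $H$ on its own extension graph to produce a $\sigma$-equivariant measurable assignment from $X$ to a canonical geometric object attached to $H$ (e.g.\ fixed pairs on the Gromov boundary of the extension graph of $H$). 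Ergodicity of $g_v$ then forces this assignment to be essentially constant, yielding an element $h_v\in H$ for which $\sigma(g_v,\cdot)$ is cohomologous to the constant cocycle with value $h_v$. Using ergodicity of adjacent pairs $\{g_v, g_w\}$ when $v,w$ are joined by an edge in $\Gamma$ (still from irreducibility), I would then verify that $h_v$ and $h_w$ commute in $H$, so that $v\mapsto h_v$ extends to a group homomorphism $\alpha: G \to H$.

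The principal obstacle lies in this second part. Since $G$ admits no global direct product decomposition in general, Monod--Shalom is unavailable as a black box and the vertex-by-vertex cocycle rigidity must be extracted by hand, by combining ergodic-theoretic arguments with the geometry of the extension graph of $H$ (in particular, using the fact that the cocycle image of a cyclic subgroup generated by a vertex generator must stabilize, up to coboundary, a canonical structure on the acylindrically hyperbolic target). Once obtained locally, the family $(h_v)_{v\in V(\Gamma)}$ must be glued into a coherent group homomorphism respecting all commutation relations of $\Gamma$, which again relies on using irreducibility to control the cocycle on commuting pairs of generators. The symmetry argument then yields that $\alpha$ is an isomorphism, and a final measure-theoretic untwisting of $\sigma$ along $\alpha$ converts the stable orbit equivalence into an honest conjugacy.
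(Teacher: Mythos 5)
Your first step (deducing stable orbit equivalence from stable $W^*$-equivalence via uniqueness of the Cartan subalgebra up to unitary conjugacy) agrees in spirit with the paper, although the reference is slightly off: the paper invokes Popa--Vaes via Ozawa--Popa's property $(\mathrm{HH})^+$ for RAAGs, not acylindrical hyperbolicity of the extension graph.

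Your second step diverges from the paper and has a genuine gap. You correctly observe that Monod--Shalom cannot be applied to $C_G(g_v)=\langle g_v\rangle\times G_{\lk(v)}$ because the cyclic factor $\langle g_v\rangle$ is not in $\Creg$, but you then abandon Monod--Shalom entirely in favor of a boundary-rigidity argument on the extension graph of $H$. This misses the paper's key structural observation: the right objects to pass to are the \emph{maximal join parabolic subgroups} $P\subseteq G$, which decompose as products of non-join parabolics. When $P$ is centerless, every factor is nonabelian and lies in $\Creg$, so Monod--Shalom \emph{is} available (Theorem~\ref{theo:monod-shalom-product}). When $P$ has center, the paper uses the quotient-by-center lemma (Lemma~\ref{lem:MS}) for two maximal join parabolics with commuting centers and then cancels the two ``central ambiguities'' against each other, which is the genuinely delicate part of the argument. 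Moreover, before any of this can be applied, one must first show that the SOE cocycle maps a maximal join parabolic $P\subseteq G$ into a maximal join parabolic $Q\subseteq H$ after restricting to a positive-measure subset; this ``recognition'' step (Lemma~\ref{lemma:recognize-product}, via measured groupoid supports) is entirely absent from your plan, and nothing in your sketch substitutes for it.

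Your boundary-theoretic alternative also has internal problems. There is no reason for $\sigma(g_v,\cdot)$ to take values in loxodromic elements of $H$ acting on its extension graph, so ``fixed pairs on the Gromov boundary'' need not be defined; even when they are, a boundary pair only pins down a commensurability class of cyclic subgroups, not an element of $H$, so ergodicity of $g_v$ does not by itself make $\sigma(g_v,\cdot)$ cohomologous to a constant. Finally, the gluing step is not sound as stated: you produce, for each vertex $v$, a measurable map $\varphi_v:X\to H$ untwisting $\sigma(g_v,\cdot)$ into a constant $h_v$, but these $\varphi_v$ are a priori unrelated, and verifying that $h_v$ and $h_w$ commute when $v\sim w$ does not produce a single untwisting $\varphi:X\to H$. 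The paper's chain-commuting propagation (Lemma~\ref{lemma:chain-commutative}) sidesteps this: it only requires the cocycle to be constant on \emph{one} standard generator, and then propagates using ergodicity of each generator and the root-conjugation property, producing a single coboundary in one pass. It is also worth noting that the authors explicitly remark that their proof avoids exactly the combinatorial rigidity of the extension graph on which your approach relies.
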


Theorem~\ref{theointro:strong-rigidity} covers a much larger class of right-angled Artin groups than our previous work \cite{HH2}, including many examples with infinite outer automorphism group. 

For example, it applies to all right-angled Artin groups whose defining graph is a tree of diameter at least $3$, which are usually less rigid from other viewpoints (for instance, they are all quasi-isometric \cite{BN}, and the problem of their measure equivalence classification is open). Also, contrary to our previous work (and to other measure equivalence rigidity statements in the literature, like \cite{Kid,HH1,GH}), our proof of Theorem~\ref{theointro:strong-rigidity} does not rely on a combinatorial rigidity statement for a curve graph analogue \cite{KK} associated to the right-angled Artin group. Instead, rigidity comes from the combination of a local argument (untwisting the orbit equivalence cocycle to a group homomorphism inside a vertex group), and a propagation argument where the commutation relations play a central role. The irreducibility assumption is crucial in both steps. The first step relies on a new orbit equivalence invariant of right-angled Artin groups (compared to \cite{HH2}), namely, the orbit equivalence relation remembers the maximal join subgroups of $G$ and $H$; this is important as it enables us to apply the results of Monod and Shalom in these local subgroups as a crucial step of the proof. 

As already explained above, counterexamples without the irreducibility assumption were given in \cite[Section~4.1]{HH2}. Counterexamples when the groups are infinitely-ended already arise in the context of free groups. Indeed, Bowen proved in \cite{Bow} that all nontrivial Bernoulli shifts of a given finitely generated free group are orbit equivalent; more generally, if $G=A_1\ast\dots\ast A_n$ and $G'=A'_1\ast\dots\ast A'_n$ are two free products of countably infinite amenable groups with the same number of factors, then all Bernoulli shifts of $G$ and $G'$ are orbit equivalent. In these contexts, the Bernoulli shifts are completely classified up to conjugation by the entropy of their base space \cite{Bow3,Bow4}, yielding a $1$-parameter family of orbit equivalent pairwise nonconjugate actions. He also proved that all nontrivial Bernoulli shifts of finitely generated nonabelian free groups (possibly of different ranks) are stably orbit equivalent \cite{Bow2} -- although as already mentioned, work of Gaboriau ensures that they are not orbit equivalent when the ranks of the acting groups are different, by comparing their costs \cite{Gab-cost}. This is in sharp contrast with our Theorem~\ref{theointro:strong-rigidity}, where stably orbit equivalent irreducible actions are automatically orbit equivalent, and in fact even conjugate. 

We mention that in the context of right-angled Artin groups, the stable $W^*$-rigidity statement in Theorem~\ref{theointro:strong-rigidity} is a consequence of the stable orbit equivalence rigidity statement, using that the corresponding von Neumann algebras have a unique virtual Cartan subalgebra up to unitary conjugacy. Uniqueness of the virtual Cartan subalgebra up to unitary conjugacy was proved in a groundbreaking work of Popa and Vaes \cite[Theorem~1.2 and Remark~1.3]{PV2} for all free, ergodic, probability measure-preserving actions of groups satisfying Ozawa and Popa's property $(\mathrm{HH})^+$  -- the fact that right-angled Artin groups satisfy this property was established by Ozawa and Popa in \cite[Theorem~2.3(5)]{OP}. See also \cite[Corollary 3.20]{HH2} for a more detailed explanation, and recent work of Chifan and Kunnawalkam Elayavalli for the more general case of graph products \cite{CKE}. 

We also mention that we actually obtain a slightly stronger statement than Theorem~\ref{theointro:strong-rigidity}, namely: every stable orbit equivalence between the actions $G\actson X$ and $H\actson Y$ has compression 1 (see Section~\ref{sec:soe} for definitions, and Propositions~\ref{theo:join-case} and~\ref{theo:coned-case} for our precise statements). In particular, the \emph{fundamental group} of the equivalence relation $\mathcal{R}$ associated to the action $G\actson X$ (i.e.\ the subgroup of $\mathbb{R}_+^*$ consisting of all $t>0$ such that $\mathcal{R}$ is isomorphic to the amplification $\mathcal{R}^t$) is trivial. Notice that the class of one-ended right-angled Artin groups with trivial center contains groups whose $\ell^2$-Betti numbers all vanish (e.g.\ all right-angled Artin groups whose defining graph is a tree of diameter at least $3$, see \cite{DL}), and for these triviality of the fundamental group does not follow from Gaboriau's proportionality principle \cite{Gab-l2}. As a consequence, the fundamental group of the group measure space von Neumann algebra $L^\infty(X)\rtimes G$ (defined by Murray and von Neumann in \cite{MvN,MvN2} as the subgroup of $\mathbb{R}_+^*$ consisting of all $t>0$ such that $L^\infty(X)\rtimes G$ is isomorphic to the amplification $(L^\infty(X)\rtimes G)^t$) is also trivial. Indeed, this again follows from the analogous result for $\calr$ precisely because $L^\infty(X)\rtimes G$ has a unique  Cartan subalgebra -- this reduction is at the heart of many remarkable results in deformation/rigidity theory \cite{Pop3}. 

\medskip

Using general techniques from measured group theory, developed in successive works of Furman \cite{Fur-oe}, Monod and Shalom \cite{MS}, and Kida \cite{Kid-oe}, Theorem~\ref{theointro:strong-rigidity} yields a superrigidity theorem within the class of mildly mixing group actions. Recall that an action of a countable group $G$ on a standard probability space $X$ is \emph{mildly mixing} if for every non-singular properly ergodic action of $G$ on a standard probability measure space $Y$, the diagonal $G$-action on $X\times Y$ is ergodic. Recall also that two measure-preserving actions $G_1\actson X_1$ and $G_2\actson X_2$ of countable groups on standard probability spaces are \emph{virtually conjugate} if there exist short exact sequences $1\to F_i\to G_i\to \bar{G}_i\to 1$ with $F_i$ finite, finite-index subgroups $\bar{G}_i^0\subseteq\bar{G}_i$, and conjugate actions $\bar{G}^0_i\actson X'_i$ (through an isomorphism between $\bar{G}_1^0$ and $\bar{G}_2^0$) such that for every $i\in\{1,2\}$, the action $\bar{G}_i\actson X_i/F_i$ is induced from $\bar{G}_i^0\actson X'_i$ as in \cite[Definition~2.1]{Kid-oe}.  

\begin{theointro}\label{theointro:superrigidity}
Let $G$ be a one-ended right-angled Artin group with trivial center. Let $G\actson X$ be a free, irreducible, measure-preserving action of $G$ on a standard probability space $X$. Let $H$ be a countable group, and let $H\actson Y$ be a mildly mixing, free, measure-preserving action of $H$ on a standard probability space $Y$.

If the actions $G\actson X$ and $H\actson Y$ are stably orbit equivalent (or merely stably $W^*$-equivalent), then they are virtually conjugate.  
\end{theointro}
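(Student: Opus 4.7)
The plan is to combine Theorem~\ref{theointro:strong-rigidity} with the standard bootstrapping machinery of Furman \cite{Fur-oe}, Monod--Shalom \cite{MS} and Kida \cite{Kid-oe}, which upgrades a rigidity theorem for two irreducible actions into a superrigidity theorem where only one side is assumed irreducible and the other is merely mildly mixing. The stable $W^*$-equivalence case reduces to the stable orbit equivalence case via uniqueness of the virtual Cartan subalgebra: since one-ended centerless right-angled Artin groups satisfy Ozawa--Popa's property $(\mathrm{HH})^+$, the Popa--Vaes theorem \cite[Theorem~1.2]{PV2} identifies every stable $W^*$-equivalence with a stable orbit equivalence, as indicated in the discussion following Theorem~\ref{theointro:strong-rigidity}. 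From the SOE, Furman's correspondence then produces a measure equivalence coupling $(\Omega,m)$ between $G$ and $H$, consisting of commuting free measure-preserving $G$- and $H$-actions admitting finite-measure fundamental domains, such that the $H$-action on $\Omega/G$ recovers $H\actson Y$ and the $G$-action on $\Omega/H$ recovers $G\actson X$.

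Next I would form the self-coupling $\Omega_G := \Omega \times_H \Omega$, a measure-equivalence self-coupling of $G$. This self-coupling yields a stable orbit equivalence between $G\actson X$ and a second free measure-preserving $G$-action $G\actson X'$, obtained by inducing $H\actson Y$ along the Zimmer cocycle $H\times Y\to G$ of the coupling. The crucial input is that mild mixing of $H\actson Y$, combined with irreducibility of $G\actson X$, forces $G\actson X'$ to be irreducible as well: for each standard generator $v$ of $G$, ergodicity of $v$ on $X'$ is to be extracted from a skew-product argument showing that any such $v$-invariant set on $X'$ would produce a nontrivial equivariant quotient of $H\actson Y$ to a properly ergodic non-singular action, contradicting mild mixing. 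This is the form the ``mild mixing upgrade'' of the Furman--Monod--Shalom--Kida framework takes in our setting.

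With both $G$-actions shown to be irreducible, Theorem~\ref{theointro:strong-rigidity} applied with $H$ replaced by $G$ produces a group automorphism $\alpha\in\Aut(G)$ conjugating $G\actson X$ to $G\actson X'$. Unpacking this conjugacy through the definition of $\Omega_G$ translates into the statement that the original coupling $\Omega$ is, up to finite-kernel and finite-index modifications, induced from a group isomorphism between finite-index subgroups of $G$ and $H$; this is precisely the notion of virtual conjugacy from \cite[Definition~2.1]{Kid-oe}. The main obstacle I expect is the ergodic-theoretic step verifying irreducibility of $G\actson X'$: mild mixing must be used to control each individual standard generator of $G$ separately rather than the group as a whole, and because the irreducibility condition for right-angled Artin groups distinguishes a preferred generating set, some care is needed to ensure that this set is respected by the cocycle induction.
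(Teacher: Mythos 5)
Your proposal follows essentially the same route as the paper's proof, which deduces Theorem~\ref{theointro:superrigidity} from an abstract superrigidity statement (Theorem~\ref{theo:conj} in the paper) for any ICC group $G$ with a ``strongly cocycle-rigid'' pair $(G,\calf)$: strong cocycle rigidity is precisely the cocycle-level reformulation of Theorem~\ref{theointro:strong-rigidity}, so applying the latter to the self-coupling, as you do, is equivalent. The main steps match: reduce stable $W^*$-equivalence to SOE via Popa--Vaes uniqueness of virtual Cartan; build Furman's measure equivalence coupling $\Sigma$ and the $G$-self-coupling; use mild mixing of $H\actson Y$ together with irreducibility of $G\actson X$ (via \cite[Lemma~6.5]{MS}) to make both $G$-actions on the self-coupling $\calf$-ergodic; untwist the SOE cocycle; and invoke Kida's criterion to extract a virtual conjugacy.

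A few small inaccuracies worth flagging. First, the self-coupling should be $\Sigma\times_H\check\Sigma$ (or, as in the paper, $\Sigma\times_H H\times_H\check\Sigma$), with the second factor being the opposite coupling; writing $\Omega\times_H\Omega$ without the check does not give a well-defined $(G,G)$-coupling. Second, neither of the two $G$-actions on the quotients $G_\ell\backslash\Omega$, $G_r\backslash\Omega$ of the self-coupling is literally $G\actson X$: both are skew-product-type actions built from $X$ and $Y$, so the cleaner statement is that these two new $G$-actions are SOE and both $\calf$-ergodic, which is all Theorem~\ref{theointro:strong-rigidity} needs. Third, the last step — passing from the untwisted cocycle on the self-coupling, i.e.\ the $\rho$-twisted equivariant map $\Phi:\Omega\to G$, to virtual conjugacy — uses that $G$ is ICC (Lemma~\ref{lemma:icc}); this is the hypothesis in \cite[Theorem~6.1]{Kid} and in \cite[Lemma~4.18]{Fur-survey}, and your proposal does not mention it. None of these affect the overall correctness of the approach, but the ICC input in the final bootstrapping step should be made explicit.
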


In the specific case of nontrivial \emph{Bernoulli actions} of $G$ (i.e.\ of the form $G\actson X_0^G$, where $X_0$ is a standard probability space not reduced to a single atom, and the action is by shift), an even stronger conclusion holds, which does not require any mildly mixing assumption on the $H$-action. By exploiting works of Popa \cite{Pop} and of Ioana, Popa and Vaes \cite{IPV}, we reach the following statement.

\begin{theointro}\label{theointro:Bernoulli}
Let $G$ be an ICC countable group, which admits a finite generating set $S=\{s_1,\dots,s_k\}$ made of infinite-order elements, such that for every $i\in\{1,\dots,k-1\}$, the elements $s_i$ and $s_{i+1}$ commute, and $s_1$ has a nonamenable centralizer in $G$. 

Let $G\actson X$ be a nontrivial Bernoulli action of $G$. Let $H$ be a countable group, and let $H\actson Y$ be a free, ergodic, measure-preserving action of $H$ on a standard probability space $Y$.

If the actions $G\actson X$ and $H\actson Y$ are orbit equivalent (or merely $W^*$-equivalent), then they are conjugate.
\end{theointro}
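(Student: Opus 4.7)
The plan is to combine Popa's cocycle superrigidity theorem for Bernoulli actions with the Ioana--Popa--Vaes framework, exploiting the commutation chain in $S$ to verify the necessary ``rigidity + spreading'' hypotheses on $G$.

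First, I extract a rigid local structure in $G$. Since $s_1$ has infinite order and $C_G(s_1)$ is nonamenable by hypothesis, and since $\langle s_1 \rangle$ is central in $C_G(s_1)$, the pair $(C_G(s_1), \langle s_1 \rangle)$ consists of commuting subgroups of $G$ with one nonamenable and one infinite. To apply Popa's criterion \cite{Pop}, I must further check that $C_G(s_1)$ is wq-normal in $G$. This I do by propagating along the commutation chain: starting from $N_0 = C_G(s_1)$ (which already contains $s_2$), I successively adjoin $s_3, s_4, \ldots, s_k$, setting $N_{i-2} = \langle N_{i-3}, s_i \rangle$. Since $s_i$ commutes with $s_{i-1} \in N_{i-3}$, the intersection $s_i N_{i-3} s_i^{-1} \cap N_{i-3}$ contains the infinite subgroup $\langle s_{i-1} \rangle$, which is precisely the inductive step in the definition of wq-normality. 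Since $N_{k-2} = G$, this verifies that $C_G(s_1)$ is wq-normal in $G$.

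Popa's cocycle superrigidity theorem \cite{Pop} for $s$-malleable actions with spectral gap then applies to any Bernoulli action $G \actson X$: every measurable cocycle valued in a $\Ufin$-group is cohomologous to a group homomorphism. Applied to the Zimmer cocycle arising from an orbit equivalence with $H \actson Y$, one obtains a homomorphism $\delta : G \to H$, and standard Furman-type manipulations \cite{Fur-oe}, using that $G$ is ICC and that the $H$-action is free, upgrade this to conjugacy of the two actions via a group isomorphism $G \to H$. This handles the orbit equivalence case.

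For the $W^*$-superrigidity statement, I invoke the Ioana--Popa--Vaes framework \cite{IPV}. The commuting-pair structure $(C_G(s_1), \langle s_1 \rangle)$, combined with the strong malleability and spectral-gap features of Bernoulli actions, allows one to show that $L^\infty(X) \rtimes G$ admits, up to unitary conjugacy, a unique group-measure-space Cartan subalgebra. Any $W^*$-equivalence $L^\infty(X) \rtimes G \simeq L^\infty(Y) \rtimes H$ then induces an orbit equivalence of the two actions, reducing to the case already handled. The main technical obstacle lies precisely in this last step: establishing uniqueness of the Cartan subalgebra is not a direct black-box application of existing theorems and requires a careful adaptation of Popa--Vaes deformation/rigidity arguments to our commuting-pair setting, leveraging nonamenability of $C_G(s_1)$ to produce the spectral gap that rules out alternative Cartan subalgebras.
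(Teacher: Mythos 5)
Your treatment of the orbit equivalence case matches the paper in spirit: both go through Popa's cocycle superrigidity theorem for $s$-malleable actions with spectral gap, verified by producing a chain witnessing wq-normality along the commutation relations in $S$. There is, however, a small but substantive slip in your chain setup. Popa's criterion requires a subgroup $G_0$ that is wq-normal in $G$ \emph{with $C_G(G_0)$ nonamenable}. You take $G_0 = C_G(s_1)$ and verify its wq-normality; but then $C_G(G_0) = C_G(C_G(s_1))$ is a bicommutant, which may well be amenable --- for instance if $G = \mathbb{Z}\times F_n$ with $s_1$ the central generator, then $C_G(s_1) = G$ and $C_G(C_G(s_1)) = Z(G) \cong \mathbb{Z}$. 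The correct choice, which is what the paper uses, is $G_0 = \langle s_1\rangle$: then $C_G(G_0) = C_G(s_1)$ is nonamenable by hypothesis, and wq-normality is witnessed by the chain $\langle s_1\rangle \subseteq \langle s_1,s_2\rangle \subseteq \cdots \subseteq G$, with each step generated by elements commuting with the previous generator. Your chain can be salvaged by prepending $\langle s_1\rangle \subset C_G(s_1)$ (since $\langle s_1\rangle$ is central, hence normal, hence wq-normal in $C_G(s_1)$, and wq-normality is transitive), but as written it applies Popa's theorem to the wrong base subgroup.

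The genuine gap is in your treatment of $W^*$-equivalence. You propose to establish uniqueness of the group-measure-space Cartan subalgebra of $L^\infty(X)\rtimes G$ and thereby reduce $W^*$-equivalence to orbit equivalence, but you yourself flag this step as ``not a direct black-box application'' and leave it unresolved. This is precisely the step the paper is designed to avoid: the appendix explicitly pursues ``another approach to $W^*$-superrigidity which bypasses Cartan-rigidity (thereby providing new examples).'' Concretely, the paper adapts the Ioana--Popa--Vaes comultiplication argument from the proof of \cite[Theorem~10.1]{IPV}: starting from an isomorphism $L^\infty(X)\rtimes G \cong L^\infty(Y)\rtimes H$, one considers the comultiplication $\Delta$ and uses the nonamenability of $C_G(s_1)$ (via \cite[Lemma~10.2(5)]{IPV}) to locate a unitary $v$ with $v\Delta(\mathrm{L}\langle s_1\rangle)v^* \subset \mathrm{L}G\,\overline{\otimes}\,\mathrm{L}G$, then propagates this along the wq-normality chain by an induction using Popa's weak mixing technique (\cite[Proposition~6.14]{PV3}). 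Since Popa's cocycle superrigidity is already in hand, the rest of the IPV machinery then gives conjugacy of the actions directly, with no Cartan-rigidity input. Your proposed route, even if the Cartan-uniqueness step could be completed, would only cover those $G$ for which Cartan-rigidity is actually established --- which would not include all the mapping class groups and non-right-angled Artin groups the theorem is meant to reach. As it stands, the $W^*$ half of your argument is therefore incomplete, and the missing step is exactly the one the paper replaces with a different technique.
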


This applies to all one-ended nonabelian right-angled Artin groups: in fact in this case, using the uniqueness of the virtual Cartan subalgebra up to unitary conjugacy, we also obtain that if the actions $G\actson X$ and $H\actson Y$ are stably $W^*$-equivalent, then they are virtually conjugate. The above theorem also applies to many (non-right-angled) Artin groups and to most mapping class groups of finite-type orientable surfaces. Let us also mention that the $W^*$-superrigidity of Bernoulli actions of countable ICC Property~(T) groups was proved by Ioana in \cite{Ioa-T}. It is conjectured that Bernoulli actions of nonamenable groups with vanishing first $\ell^2$-Betti number should always be $W^*$-superrigid. This conjecture is a consequence of the combination of two conjectures of Popa, see \cite[pages 2 and 4]{Popa-problems} and \cite[Problems III and IV]{Ioana-ICM}. Theorem~\ref{theointro:Bernoulli} provides new classes of groups that verify this conjecture.

\medskip

Let us conclude this introduction by presenting the main steps of our proof of Theorem~\ref{theointro:strong-rigidity}. We have a cocycle $c:G\times X\to H$, given by the stable orbit equivalence of the actions. We first observe that it is enough to find a standard generator $s$ of $G$ such that, after replacing $c$ by a cohomologous cocycle (of the form $c'(g,x)=\varphi(gx)c(g,x)\varphi(x)^{-1}$ for some measurable map $\varphi:X\to H$), the map $c_{|\langle s\rangle\times X}$ is almost everywhere constant. Indeed, a propagation argument, using that $s$ is part of a generating set of $G$ with the property that two consecutive elements commute, then shows that $c$ is cohomologous to a group homomorphism (and likewise for the given cocycle $H\times Y\to G$), from which the conclusion follows. This propagation argument is presented in Section~\ref{sec:commuting-chain}.

The first step towards the above goal is to use the techniques from our previous work \cite{HH2} to ``recognize'' certain natural subgroups of $G$ and $H$ from the orbit equivalence relation coming from their actions. More precisely, we prove that there exist maximal join parabolic subgroups $P\subseteq G$ and $Q\subseteq H$ (i.e.\ decomposing as a nontrivial product), and positive measure Borel subsets $U\subseteq X$ and $V\subseteq Y$, such that after identifying $U$ and $V$ through a measure-scaling isomorphism, the intersections of the $P$-orbits with $U$ coincide with the intersections of the $Q$-orbits with $V$. 

If $P$ and $Q$ have trivial center, then we can directly apply Monod and Shalom's rigidity theorem \cite[Theorem~2.17]{MS} regarding actions of direct products of groups in the class $\mathcal{C}_{\mathrm{reg}}$ to get the desired conclusion. 

The most difficult case is when all maximal join parabolic subgroups of $G$ have nontrivial center. This in fact often happens: for instance, if the underlying graph of $G$ is triangle-free and square-free, then the maximal join parabolic subgroups are exactly the star subgroups, isomorphic to $\mathbb{Z}\times F_n$. In this case, a simple combinatorial argument enables us to find two maximal join parabolic subgroups $P_1,P_2\subseteq G$ with commuting centers. Using techniques from \cite{HH2}, we are able to show that the orbits of the subgroups $P_i$, restricted to some positive measure Borel subset $U$, coincide with the orbits (restricted to some $V$) of two maximal join parabolic subgroups $Q_1,Q_2\subseteq H$ with commuting centers.  As the centers $A_1,A_2$ of $P_1,P_2$ act ergodically (and likewise for the centers $B_1,B_2$ of $Q_1,Q_2$), we can then apply another rigidity theorem due to Monod and Shalom \cite{MS} to derive that for every $i\in\{1,2\}$, the cocycle $c$ is cohomologous to a cocycle $c_i$ that induces a group isomorphism between the quotients $P_i/A_i$ and $Q_i/B_i$. Informally, this means that our cocycle $c_i$ is only controlled \emph{up to an ambiguity in the central direction}. But by comparing the ambiguities given by $c_1$ and $c_2$, we manage to cancel them and prove that $c$ is actually cohomologous to a group homomorphism on $A_i$. As explained above, this is enough to conclude our proof. 

\paragraph*{Acknowledgments.} We thank the referees for their careful reading of our manuscript.

The first named author acknowledges support from the Agence Nationale de la Recherche under Grant ANR-16-CE40-0006 DAGGER. 

\section{Right-angled Artin groups and combinatorial lemmas}

\label{sec:raag}

Given a finite simple graph $\Gamma$, the \emph{right-angled Artin group} $G_\Gamma$ is the group defined by the following presentation: 

\begin{center}
	$G_\Gamma=\langle V\Gamma$\ |\ $[v,w]=1$ if $v$ and $w$ are joined by an edge$\rangle$.
\end{center}

The images in $G_\Gamma$ of the vertices of $\Gamma$ form the \emph{standard generating set} of $G_\Gamma$. A \emph{full subgraph} of $\Gamma$ is a subgraph $\Lambda\subseteq\Gamma$ such that two vertices of $\Lambda$ are adjacent in $\Lambda$ if and only if they are adjacent in $\Gamma$. Any full subgraph $\Lambda\subseteq\Gamma$ induces an injective homomorphism $G_{\Lambda}\hookrightarrow G_{\Gamma}$ (sending the standard generating set of $G_\Lambda$ to a subset of the standard generating set of $G_\Gamma$), whose image is called a \emph{standard subgroup} of $G_{\Gamma}$. Conjugates of standard subgroups are called \emph{parabolic subgroups} of $G_\Gamma$. 

It is known that if $gG_{\Lambda_1}g^{-1}\subseteq G_{\Lambda_2}$ for some full subgraphs $\Lambda_1,\Lambda_2$ of $\Gamma$, then  $\Lambda_1\subseteq\Lambda_2$ and there exists $h\in G_{\Lambda_2}$ such that  $hG_{\Lambda_1}h^{-1}=gG_{\Lambda_1}g^{-1}$ (this follows from \cite[Proposition~2.2]{charney2007automorphisms}). Thus the parabolic subgroup $gG_{\Lambda_1}g^{-1}$ of $G_\Gamma$ is also a parabolic subgroup of $G_{\Lambda_2}$.  

For a full subgraph $\Lambda\subseteq\Gamma$, define $\Lambda^\perp$ to be the full subgraph spanned by all vertices in $V\Gamma\setminus V\Lambda$ that are adjacent to all vertices of $\Lambda$. Let now $P=gG_{\Lambda}g^{-1}$ be a parabolic subgroup. We define $P^\perp=gG_{\Lambda^\perp}g^{-1}$. This is well-defined: if we can write the parabolic subgroup $P$ in two different ways $gG_{\Lambda}g^{-1}$ and $hG_{\Lambda'}h^{-1}$, then \cite[Proposition~2.2]{charney2007automorphisms} implies that $\Lambda=\Lambda'$ and $gG_{\Lambda^\perp}g^{-1}=hG_{\Lambda^\perp}h^{-1}$. 

\begin{lemma}[{Charney--Crisp--Vogtmann \cite[Proposition~2.2]{charney2007automorphisms}}]
	\label{lemma:normalizer}
Let $P\subseteq G_\Gamma$ be a parabolic subgroup. Then the normalizer of $P$ in $G_{\Gamma}$ is $P\times P^\perp$. 
\end{lemma}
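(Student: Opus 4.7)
The plan is to reduce to the standard case by conjugation, and then to prove both inclusions of $N_{G_\Gamma}(G_\Lambda) = G_\Lambda \times G_{\Lambda^\perp}$. Writing $P = gG_\Lambda g^{-1}$ for some full subgraph $\Lambda \subseteq \Gamma$, one has $P^\perp = gG_{\Lambda^\perp}g^{-1}$ by construction; since normalizers are equivariant under conjugation, it suffices to establish the statement for the standard parabolic $G_\Lambda$.

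For the inclusion $G_\Lambda \times G_{\Lambda^\perp} \subseteq N_{G_\Gamma}(G_\Lambda)$, note that by the very definition of $\Lambda^\perp$ every vertex of $\Lambda^\perp$ is joined by an edge to every vertex of $\Lambda$, so the standard generators of $G_{\Lambda^\perp}$ commute with those of $G_\Lambda$. Hence $G_{\Lambda^\perp}$ centralizes $G_\Lambda$, and both subgroups lie in the normalizer. Since $V\Lambda$ and $V\Lambda^\perp$ are disjoint subsets of $V\Gamma$, a standard normal-form argument in right-angled Artin groups yields $G_\Lambda \cap G_{\Lambda^\perp} = \{1\}$, so the product is internal and direct.

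For the reverse inclusion, take $g \in N_{G_\Gamma}(G_\Lambda)$. The key step is to invoke Proposition~2.2 of \cite{charney2007automorphisms} in its full strength (the consequence quoted in the paper just above the lemma is a strictly weaker form): it provides $h \in G_\Lambda$ such that $h^{-1}g$ centralizes $G_\Lambda$ element-wise, not merely set-wise. This reduces the task to proving $Z_{G_\Gamma}(G_\Lambda) = Z(G_\Lambda) \cdot G_{\Lambda^\perp}$, where $Z(G_\Lambda)$ denotes the center of $G_\Lambda$ (and is contained in $G_\Lambda$). The inclusion $\supseteq$ is immediate. For $\subseteq$, write any centralizing $z$ in reduced normal form over $V\Gamma$: each of its letters must be supported on a vertex commuting with every vertex of $V\Lambda$, hence is either a vertex of $\Lambda^\perp$, or a vertex of $\Lambda$ adjacent to every other vertex of $\Lambda$ (these latter vertices generate $Z(G_\Lambda)$). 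Since all such letters pairwise commute, $z$ factors accordingly. Combining, $g \in G_\Lambda \cdot Z(G_\Lambda) \cdot G_{\Lambda^\perp} = G_\Lambda \cdot G_{\Lambda^\perp}$, which closes the argument given the direct-product structure from the previous paragraph.

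The main obstacle I anticipate is precisely extracting the full-strength Proposition~2.2 from \cite{charney2007automorphisms}: the consequence quoted in the paper, that some $h \in G_{\Lambda_2}$ satisfies $hG_{\Lambda_1}h^{-1} = gG_{\Lambda_1}g^{-1}$, is vacuous when applied with $\Lambda_1 = \Lambda_2 = \Lambda$ and $g \in N_{G_\Gamma}(G_\Lambda)$ (take $h=1$). One must therefore reach into the proof of CCV's proposition to extract the refined information that the conjugation-automorphism of $G_\Lambda$ induced by $g$ is actually inner; once this is in hand, the centralizer computation is a routine normal-form exercise.
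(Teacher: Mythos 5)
The lemma you are asked to prove is not proved in the paper at all: it is cited directly from Charney--Crisp--Vogtmann \cite[Proposition~2.2]{charney2007automorphisms}, so there is no paper proof to compare against. Your write-up is best read as a sketch of how one \emph{would} prove it from scratch, and in that light there are two issues worth flagging.

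First, the argument is essentially circular. You rightly note that the weaker consequence quoted just above the lemma is vacuous for $g\in N_{G_\Gamma}(G_\Lambda)$, and you propose to remedy this by invoking ``the full strength'' of Proposition~2.2, namely that conjugation by a normalizing element agrees with an inner automorphism of $G_\Lambda$. But the normalizer formula $N_{G_\Gamma}(G_\Lambda)=G_\Lambda\times G_{\Lambda^\perp}$ \emph{is} (part of) Proposition~2.2 in \cite{charney2007automorphisms}, and the ``inner automorphism'' statement is equivalent to it once the centralizer is computed. If one is allowed to cite Proposition~2.2 in full strength, the lemma follows immediately and the rest of your argument is unnecessary; if one is not, then the step you flag as the main obstacle is precisely the content of the lemma, and ``reaching into the proof of CCV'' is not a proof but a deferral. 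A genuinely self-contained argument would have to replace this step by an actual computation, for example via the normal form/piling arguments that CCV themselves use.

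Second, the final centralizer computation is correct in conclusion but is understated as ``routine.'' The assertion that every letter of a reduced word representing a centralizing element $z$ must lie on a vertex commuting with every vertex of $V\Lambda$ is not a direct reading of normal forms: for a single vertex $v$ it is Servatius' Centralizer Theorem, which says that $Z_{G_\Gamma}(v)=\langle v\rangle\times G_{\lk(v)}$ and hence is a standard subgroup; one then intersects over $v\in V\Lambda$ using that an intersection of standard subgroups $G_{\Theta_1}\cap G_{\Theta_2}=G_{\Theta_1\cap\Theta_2}$. Both facts are classical, but they are real lemmas and should be cited, not folded into ``a routine normal-form exercise.'' With these two points addressed---either by citing the full CCV statement outright (as the paper does) or by substituting an honest centralizer/normalizer computation for the appeal to Proposition~2.2---your outline would be a valid alternative route.
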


 Many properties of $G_\Gamma$ can be read from its defining graph $\Gamma$. For instance $G_\Gamma$ is one-ended if and only if $\Gamma$ is connected, and $G_\Gamma$ has trivial center if and only if no vertex of $\Gamma$ is connected to every other vertex.

For any full subgraph $\Lambda\subseteq\Gamma$, there is a retraction $r_\Lambda:G_{\Gamma}\to G_{\Lambda}$ defined by sending every element of the standard generating set corresponding to a vertex in $V\Gamma\setminus V\Lambda$ to the identity element. Hence for any parabolic subgroup $P=gG_{\Lambda}g^{-1}$ of $G_{\Gamma}$, we have a (uniquely well-defined) retraction $r_P:G_{\Gamma}\to P$, defined by letting $r_P(gsg^{-1})=gr_\Lambda(s)g^{-1}$ for every standard generator $s$ of $G_\Gamma$.

 A \emph{join subgraph} $\Lambda$ of $\Gamma$ is a full subgraph which admits a join decomposition $\Lambda=\Lambda_1\circ \Lambda_2$ (i.e.\ every vertex of $\Lambda_1$ is adjacent to every vertex of $\Lambda_2$) with $\Lambda_i\neq \emptyset$ for every $i\in\{1,2\}$. A \emph{maximal join subgraph} is a join subgraph which is not properly contained in another join subgraph.

A \emph{(maximal) join parabolic subgroup} is a parabolic subgroup of form $gG_{\Lambda}g^{-1}$ where $\Lambda$ is a (maximal) join subgraph of $\Gamma$.

The \emph{clique factor} of a graph $\Lambda$ is the maximal complete subgraph appearing in a join decomposition of $\Lambda$.

\begin{lemma}\label{lemma:join-parabolic}
Let $G=G_\Gamma$ be a right-angled Artin group, let $P$ be a join parabolic subgroup of $G$, and let $S\subseteq P$ be a parabolic subgroup. Then $S\times S^{\perp}$ is a join parabolic subgroup.
\end{lemma}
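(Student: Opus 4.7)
The plan is to reduce the statement to a purely combinatorial assertion about the defining graph $\Gamma$, then dispatch it with a short case analysis driven by the join structure of $P$.

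First I would set up notation. Since $P$ is a join parabolic subgroup, write $P=gG_\Lambda g^{-1}$ with $\Lambda=\Lambda_1\circ\Lambda_2$ a nontrivial join decomposition in $\Gamma$. As $S\subseteq P$ is a parabolic subgroup, the consequence of \cite[Proposition~2.2]{charney2007automorphisms} recalled in the excerpt gives $S=ghG_{\Lambda_0}h^{-1}g^{-1}$ for some full subgraph $\Lambda_0\subseteq\Lambda$ and some $h\in G_\Lambda$; assume $S$ is nontrivial, i.e.\ $\Lambda_0\neq\emptyset$. By definition, $S^\perp=ghG_{\Lambda_0^\perp}h^{-1}g^{-1}$, where $\Lambda_0^\perp$ is taken inside $\Gamma$. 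Every vertex of $\Lambda_0^\perp$ commutes in $G_\Gamma$ with every vertex of $\Lambda_0$, so $G_{\Lambda_0}\cdot G_{\Lambda_0^\perp}=G_{\Lambda_0\cup\Lambda_0^\perp}$ as an internal direct product. Therefore
\[
S\times S^\perp \;=\; gh\,G_{\Lambda_0\cup\Lambda_0^\perp}\,h^{-1}g^{-1},
\]
which is a parabolic subgroup of $G_\Gamma$. It remains to verify that $\Lambda_0\cup\Lambda_0^\perp$ is a join subgraph of $\Gamma$.

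The combinatorial core is the following dichotomy. If $\Lambda_0^\perp\neq\emptyset$, then by the very definition of $\Lambda_0^\perp$ every one of its vertices is adjacent to every vertex of $\Lambda_0$, so $\Lambda_0\cup\Lambda_0^\perp=\Lambda_0\circ\Lambda_0^\perp$ is a nontrivial join and we are done. Otherwise, I claim $\Lambda_0$ itself already decomposes as a nontrivial join. Indeed, if $\Lambda_0\subseteq\Lambda_1$, then every vertex of $\Lambda_2$ is adjacent to every vertex of $\Lambda_0$, and since $\Lambda_2$ is nonempty and disjoint from $\Lambda_0$, we would have $\emptyset\neq\Lambda_2\subseteq\Lambda_0^\perp$, contradicting $\Lambda_0^\perp=\emptyset$. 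By symmetry $\Lambda_0\not\subseteq\Lambda_2$ either, so both $\Lambda_0\cap\Lambda_1$ and $\Lambda_0\cap\Lambda_2$ are nonempty. As $V\Lambda_0$ partitions as $(V\Lambda_0\cap V\Lambda_1)\sqcup(V\Lambda_0\cap V\Lambda_2)$ and $\Lambda=\Lambda_1\circ\Lambda_2$ forces every vertex of the first part to be adjacent to every vertex of the second, the fullness of $\Lambda_0$ yields the nontrivial join decomposition $\Lambda_0=(\Lambda_0\cap\Lambda_1)\circ(\Lambda_0\cap\Lambda_2)$. In either branch $\Lambda_0\cup\Lambda_0^\perp$ is a join subgraph, so $S\times S^\perp$ is a join parabolic subgroup.

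There is no serious obstacle: the only place where one must be careful is in identifying the correct ambient graph when computing $S^\perp$ (it is taken in $\Gamma$, not in $\Lambda$), and in bookkeeping the two branches of the dichotomy above. Everything else follows from the conjugacy normal form for parabolic subgroups inherited from \cite[Proposition~2.2]{charney2007automorphisms} and the elementary fact that $G_{\Lambda_0}$ and $G_{\Lambda_0^\perp}$ commute and generate $G_{\Lambda_0\cup\Lambda_0^\perp}$ as a direct product.
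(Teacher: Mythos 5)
Your proof is correct and follows essentially the same approach as the paper's: both reduce to combinatorics on the defining graph via the conjugacy normal form from \cite[Proposition~2.2]{charney2007automorphisms}, and both rest on the same dichotomy — either $\Lambda_0^\perp$ inherits a nonempty factor from the join $\Lambda=\Lambda_1\circ\Lambda_2$, or $\Lambda_0$ must meet both $\Lambda_1$ and $\Lambda_2$ and therefore is itself a nontrivial join. You merely organize the two cases around whether $\Lambda_0^\perp$ is empty rather than around whether $\Lambda_0\subseteq\Lambda_i$, and you spell out the identification $S\times S^\perp=gh\,G_{\Lambda_0\cup\Lambda_0^\perp}\,h^{-1}g^{-1}$ and the degenerate case $S=\{1\}$, which the paper leaves implicit.
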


\begin{proof}
Let $\Lambda\subseteq\Gamma$ be a full subgraph such that $P$ is conjugate to $G_\Lambda$; the subgraph $\Lambda$ decomposes nontrivially as a join $\Lambda=\Lambda_1\circ\Lambda_2$. Then $S$ is conjugate to $G_\Upsilon$ for some full subgraph $\Upsilon$ of $\Lambda$ (as follows from \cite[Proposition~2.2]{charney2007automorphisms}). If $\Upsilon\subseteq\Lambda_i$ for some $i\in\{1,2\}$, then $\Upsilon^{\perp}$ contains $\Lambda_{3-i}$, so $S\times S^{\perp}$ is a join parabolic subgroup. Otherwise $\Upsilon$ decomposes nontrivially as a join, and $S$ itself is a join parabolic subgroup (and therefore so is $S\times S^{\perp}$).
\end{proof}

\begin{lemma}\label{lemma:maximal-not-abelian}
Let $G=G_\Gamma$ be a nonabelian right-angled Artin group with connected defining graph. Then no maximal join parabolic subgroup is abelian.
\end{lemma}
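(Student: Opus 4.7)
The plan is to argue by contradiction. Suppose some maximal join parabolic subgroup $P$ of $G = G_\Gamma$ is abelian, and write $P = gG_\Lambda g^{-1}$ for some maximal join subgraph $\Lambda \subseteq \Gamma$. Since $G_\Lambda$ is abelian, $\Lambda$ must be a complete subgraph (a clique). Because $G$ is nonabelian, $\Gamma$ itself is not a clique, so $\Lambda$ is a proper subgraph of $\Gamma$. The goal is then to strictly enlarge $\Lambda$ inside the class of join subgraphs, contradicting its maximality.

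The key move is to use connectedness to add a single vertex to $\Lambda$ in a controlled way. Since $\Gamma$ is connected and $\Lambda \subsetneq \Gamma$, I can pick a vertex $u \in V\Gamma \setminus V\Lambda$ that is adjacent in $\Gamma$ to at least one vertex of $\Lambda$. I then partition $V\Lambda = \Lambda_1 \sqcup \Lambda_0$, where $\Lambda_1$ is the (nonempty) set of vertices of $\Lambda$ adjacent to $u$ and $\Lambda_0 = V\Lambda \setminus \Lambda_1$. I claim that the full subgraph on $V\Lambda \cup \{u\}$ admits the nontrivial join decomposition $\Lambda_1 \circ (\Lambda_0 \cup \{u\})$: both factors are nonempty (the left one because $\Lambda_1 \neq \emptyset$, the right one because it contains $u$); every vertex of $\Lambda_1$ is adjacent to $u$ by construction; and every vertex of $\Lambda_1$ is adjacent to every vertex of $\Lambda_0$ since $\Lambda$ is a clique. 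This produces a join subgraph strictly containing $\Lambda$, contradicting the maximality of $\Lambda$.

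I do not anticipate a serious obstacle here; the one delicate point is the correct choice of how to distribute $V\Lambda \cup \{u\}$ across the two sides of the join, namely placing $u$ on the same side as the vertices of $\Lambda$ that are \emph{not} adjacent to it, so that the cliqueness of $\Lambda$ and the choice of $\Lambda_1$ make the full bipartite adjacency condition hold automatically.
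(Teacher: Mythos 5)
Your proof is correct and follows essentially the same route as the paper: assume $\Lambda$ is a maximal join subgraph that is a clique, use connectedness and nonabelianity to find a vertex $u$ outside $\Lambda$ adjacent to a vertex of $\Lambda$, and exhibit a join subgraph strictly containing $\Lambda$. The only cosmetic difference is the witness: the paper takes $v\circ v^\perp$ (for a vertex $v\in V\Lambda$ of the boundary edge), which automatically is a join subgraph containing both $\Omega$ and $u$, whereas you take the full subgraph on $V\Lambda\cup\{u\}$ with the explicit decomposition $\Lambda_1\circ(\Lambda_0\cup\{u\})$; both are valid.
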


\begin{proof}
Let $\Omega\subseteq\Gamma$ be a maximal join  subgraph, and assume towards a contradiction that $\Omega$ is a clique. As $G$ is nonabelian and $\Gamma$ is connected, we can find a vertex $v\in V\Omega$ which is joined by an edge to a vertex $u\notin V\Omega$. In particular, $v\circ v^{\perp}$ is a join  subgraph of $\Gamma$ which properly contains $\Omega$, contradicting the maximality of $\Omega$.  
\end{proof}

The following basic combinatorial lemma will be crucial for the general structure of the proof of our main theorems: two different arguments will be used in the paper, depending on whether $G_\Gamma$ satisfies the first or second conclusion below.

\begin{lemma}\label{lemma:combinatorics}
Let $G=G_\Gamma$ be a one-ended right-angled Artin group with trivial center. Then either $G$ contains a maximal join parabolic subgroup with trivial center, or else $G$ contains two distinct  nonabelian maximal join parabolic subgroups whose centers commute.
\end{lemma}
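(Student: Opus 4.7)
The plan is to reduce the problem to combinatorics of $\Gamma$. Since $G = G_\Gamma$ is one-ended, centerless, and therefore nonabelian, $\Gamma$ is connected, no vertex of $\Gamma$ is joined to all others, and $\Gamma$ has at least one edge. A maximal join parabolic subgroup is, up to conjugation, of the form $G_\Omega$ for a maximal join subgraph $\Omega \subseteq \Gamma$, and its center is $G_{K_\Omega}$, where $K_\Omega$ (the \emph{clique factor} of $\Omega$) is the set of vertices of $\Omega$ adjacent to every other vertex of $\Omega$. The first alternative amounts to the existence of some $\Omega$ with $K_\Omega = \emptyset$; so I assume from now on that $K_\Omega \ne \emptyset$ for every maximal join, and aim to find two distinct maximal joins $\Omega, \Omega'$ with $K_\Omega \cup K_{\Omega'}$ a clique (so that the centers $G_{K_\Omega}$ and $G_{K_{\Omega'}}$ commute element-wise in $G$).

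A crucial observation is that if $v \in K_\Omega$, then $\st(v) = \Omega$: since $v$ is adjacent to every other vertex of $\Omega$, one has $\Omega \subseteq \st(v)$, and $\st(v)$ is itself a join, so maximality of $\Omega$ gives equality. Letting $V_* := \{v \in V\Gamma : \st(v) \text{ is a maximal join}\}$, every maximal join is of the form $\st(v)$ with $v \in V_*$, and the clique factors $K_\Omega$ form a partition of $V_*$ as $\Omega$ ranges over the maximal joins.

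The main claim is that there exist a maximal join $\Omega$ and a vertex $v' \in \lk(K_\Omega) \cap V_*$. Given such a $v'$, set $\Omega' := \st(v')$, whose clique factor is $K_{\Omega'} = \{v'\} \cup L_{v'}$ with $L_{v'}$ the clique factor of the subgraph $\lk(v')$. The joins $\Omega$ and $\Omega'$ are distinct since $v' \in K_{\Omega'} \setminus K_\Omega$. The key point is that $K_\Omega \cup K_{\Omega'}$ is a clique: any $l \in L_{v'}$ is adjacent to every vertex of $\lk(v')$, and since $K_\Omega \subseteq \lk(v')$ (as $v' \in \lk(K_\Omega)$), $l$ is adjacent to all of $K_\Omega$, so $l \in K_\Omega \cup \lk(K_\Omega) = \Omega$; combined with $v' \sim K_\Omega$ and the fact that $K_\Omega$ and $K_{\Omega'}$ are themselves cliques, this gives that $K_\Omega \cup K_{\Omega'}$ is a clique.

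To establish the main claim, I argue by contradiction: assume $\lk(K_\Omega) \cap V_* = \emptyset$ for every maximal join $\Omega$. Fix $\Omega$ with $K := K_\Omega \ne \emptyset$. Centerlessness gives $\Omega \ne \Gamma$, and since $\lk(v) \subseteq \Omega$ for every $v \in K$, connectedness of $\Gamma$ provides $a \in \lk(K)$ with a neighbor $w \notin \Omega$. By hypothesis $a \notin V_*$, so $\st(a)$ is properly contained in some maximal join $\Omega_a = \st(w_a)$ with $w_a \in V_*$; the presence of $w$ forces $\Omega_a \ne \Omega$, and since $V_* \cap \Omega = K$ under our hypothesis, one deduces $w_a \notin \Omega$. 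Hence $K$ and $K_{\Omega_a}$ are disjoint $V_*$-cliques. The set $\Lambda := \{a\} \circ (K \cup K_{\Omega_a})$ is a join (as $a \in \lk(K) \cap \lk(K_{\Omega_a})$), and $\Lambda$ is not maximal (else $\st(a) = \Lambda$ would be maximal, putting $a \in V_*$). So $\Lambda \subsetneq \Omega^*$ for some maximal join $\Omega^*$; since $\Omega^* \supseteq K_{\Omega_a}$ while $K_{\Omega_a} \cap \Omega = \emptyset$, one has $\Omega^* \ne \Omega$, whence $K \cap K_{\Omega^*} = \emptyset$ by disjointness of the $V_*$-cliques. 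Therefore $K \subseteq \Omega^* \setminus K_{\Omega^*} = \lk(K_{\Omega^*})$, and applying the contradiction hypothesis to $\Omega^*$ gives $K \subseteq V\Gamma \setminus V_*$, contradicting $\emptyset \ne K \subseteq V_*$. The main obstacle is this last step, where several rigidity properties (most importantly $\st(v) = \Omega$ for $v \in K_\Omega$) must be combined carefully to force the disjoint $V_*$-clique structure into a configuration incompatible with $\Gamma$ being connected.
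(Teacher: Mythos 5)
Your reduction to the combinatorics of $\Gamma$ and your initial observations (e.g.\ that $v\in K_\Omega$ forces $\st(v)=\Omega$) match the paper, and your argument is correct, but the route you take to produce the second maximal join is genuinely different and substantially longer. You introduce the set $V_*$ of vertices whose star is maximal, show the clique factors $K_\Omega$ partition $V_*$, and then run a contradiction argument to locate a maximal join $\Omega$ together with a vertex $v'\in\lk(K_\Omega)\cap V_*$, so that you may set $\Omega'=\st(v')$. The paper's proof short-circuits all of this: it picks any $v\in V\Omega$ adjacent to some $u\notin V\Omega$ (which exists by centerlessness and connectedness, and which is automatically in $\lk(K_\Omega)$ for the same reason you give for your vertex $a$), observes that $K_\Omega\subseteq\st(v)$, and then takes \emph{any} maximal join subgraph $\Lambda\supseteq\st(v)$. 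The key simplification you miss is that one does not need $\Lambda$ to equal $\st(v')$ for some $v'\in V_*$: the clique factor $\Lambda_1$ of $\Lambda$ is by definition adjacent to everything else in $\Lambda\supseteq\st(v)\supseteq K_\Omega$, so $G_{\Lambda_1}$ and $G_{K_\Omega}$ automatically commute, and $\Lambda\neq\Omega$ since $u\in V\Lambda\setminus V\Omega$. This renders your whole $V_*$-machinery and the contradiction subargument unnecessary, though your version does extract some extra structure (the $V_*$-partition) that could be of independent interest. Finally, a small omission: the lemma requires the two maximal join parabolic subgroups to be \emph{nonabelian}, and you never verify this; it is immediate from Lemma~\ref{lemma:maximal-not-abelian}, as the paper notes.
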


\begin{proof}
We assume that every maximal join parabolic subgroup of $G$ has a nontrivial center, and prove that the second conclusion of the lemma holds. Let $\Omega$ be a maximal join subgraph in $\Gamma$, with clique factor $\Omega_1$. As $G$ has trivial center and $\Gamma$ is connected (because $G$ is one-ended), there is a vertex $v\in V\Omega$ such that $v$ is adjacent to a vertex $u$ outside $\Omega$. Let $\Lambda$ be a maximal join subgraph containing $v\circ v^\perp$. Then $\Omega_1\subsetneq v\circ v^\perp\subseteq \Lambda$ and $\Omega\neq\Lambda$ (as $u\in V\Lambda$). By Lemma~\ref{lemma:maximal-not-abelian}, the parabolic subgroups $G_\Omega$ and $G_\Lambda$ are nonabelian. Finally, letting $\Lambda_1$ be the clique factor of $\Lambda$, the group $G_{\Lambda_1}$ commutes with $G_{v\circ v^{\perp}}$, in particular $G_{\Lambda_1}$ and $G_{\Omega_1}$ commute.
\end{proof}

\begin{lemma}\label{lemma:commuting-centers}
Let $G=G_{\Gamma}$ be a right-angled Artin group, and let $P_1,P_2\subseteq G$ be two distinct maximal join parabolic subgroups. For every $i\in\{1,2\}$, let $Z_i$ be the center of $P_i$.

Then $Z_1\cap Z_2=\{1\}$. In particular, if $Z_1$ and $Z_2$ commute, then $Z_1\subseteq Z_2^{\perp}$ and $Z_2\subseteq Z_1^{\perp}$.
\end{lemma}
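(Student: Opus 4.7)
The plan is to handle the two assertions in turn, relying on Lemma~\ref{lemma:normalizer} and on the fact recalled in the excerpt from \cite{charney2007automorphisms} that a parabolic subgroup contained in a standard subgroup $G_\Lambda$ is already parabolic inside $G_\Lambda$. I would first reduce to the case where $\Gamma$ is connected and $G$ is nonabelian: if $G$ is abelian then the unique maximal join parabolic is $G$ itself and $P_1=P_2$ is forced, while if $\Gamma$ is disconnected two distinct maximal join parabolics either sit in a common connected component (and one reduces) or in distinct components (and the intersection is already trivial as subgroups of a free product).

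For $Z_1\cap Z_2=\{1\}$, I argue by contradiction. Pick a nontrivial $z\in Z_1\cap Z_2$. Its parabolic closure $\mathrm{Pc}(z)$, the smallest parabolic subgroup containing $z$, lies inside the abelian $Z_1$ (which is an abelian parabolic containing $z$), so $\mathrm{Pc}(z)$ is itself abelian. Canonicity of parabolic closures under conjugation gives $C_G(z)\subseteq N_G(\mathrm{Pc}(z))$, and the reverse inclusion holds because $\mathrm{Pc}(z)^\perp$ commutes with $\mathrm{Pc}(z)\ni z$; by Lemma~\ref{lemma:normalizer} this yields $C_G(z)=\mathrm{Pc}(z)\times\mathrm{Pc}(z)^\perp$. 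Both $P_1$ and $P_2$ are contained in $C_G(z)$, since $z$ is central in each. Now Lemma~\ref{lemma:maximal-not-abelian} forces $P_1$ to be nonabelian, so $\mathrm{Pc}(z)^\perp$ is nontrivial and $\mathrm{Pc}(z)\times\mathrm{Pc}(z)^\perp$ is a genuine join parabolic subgroup containing $P_1$. Maximality of $P_1$ gives $P_1=\mathrm{Pc}(z)\times\mathrm{Pc}(z)^\perp$, and likewise $P_2$ equals the same subgroup, contradicting $P_1\neq P_2$.

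For the second assertion, assume $[Z_1,Z_2]=1$. Since $Z_2^\perp$ commutes with the abelian $Z_2$ by construction, the centralizer of $Z_2$ in $G$ equals $N_G(Z_2)=Z_2\times Z_2^\perp$, so $Z_1\subseteq Z_2\times Z_2^\perp$. After conjugating so that $Z_2=G_{K_2}$ and $Z_2^\perp=G_{K_2^\perp}$ are standard, $Z_1$ is a parabolic subgroup of $G$ contained in the standard subgroup $G_{K_2\cup K_2^\perp}$, hence parabolic there; being abelian, one can write $Z_1=h_0G_\Sigma h_0^{-1}$ for some clique $\Sigma\subseteq K_2\cup K_2^\perp$. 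Such a clique splits canonically as $\Sigma=\Sigma_a\sqcup\Sigma_b$ with $\Sigma_a\subseteq K_2$ and $\Sigma_b\subseteq K_2^\perp$, and writing $h_0=a_0b_0$ with $a_0\in G_{K_2}$ and $b_0\in G_{K_2^\perp}$, abelianity of $G_{K_2}$ together with commutation of $G_{K_2}$ with $G_{K_2^\perp}$ yields $Z_1=G_{\Sigma_a}\times(b_0G_{\Sigma_b}b_0^{-1})$. Intersecting with $Z_2=G_{K_2}$ gives $Z_1\cap Z_2=G_{\Sigma_a}$, which is trivial by the first part; hence $\Sigma_a=\emptyset$ and $Z_1\subseteq Z_2^\perp$. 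The inclusion $Z_2\subseteq Z_1^\perp$ follows by symmetry.

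The main obstacle is in the first assertion: producing a join parabolic subgroup large enough to contain $P_1$ requires the precise identification $C_G(z)=\mathrm{Pc}(z)\times\mathrm{Pc}(z)^\perp$, which rests on the canonicity of parabolic closures under conjugation by centralizing elements. Once this structural identity is in place, maximality essentially closes the first step, and the second assertion reduces to a short computation inside the direct product $G_{K_2\cup K_2^\perp}=G_{K_2}\times G_{K_2^\perp}$.
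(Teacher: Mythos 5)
Your proof is correct, and the overall strategy matches the paper's: for the first assertion, place both $P_1$ and $P_2$ inside a common join parabolic $A\times A^\perp$ with $A$ a nontrivial abelian parabolic and invoke maximality; for the second, locate $Z_2$ (resp.\ $Z_1$) inside $Z_1\times Z_1^\perp$ (resp.\ $Z_2\times Z_2^\perp$) and use the product structure of its parabolic subgroups. The differences are in implementation. For the first part, the paper takes $A=Z_1\cap Z_2$ directly, citing \cite[Proposition~2.6]{DKR} that this intersection is a parabolic subgroup; you instead take $A=\mathrm{Pc}(z)$ for a single nontrivial $z\in Z_1\cap Z_2$ and establish the identity $C_G(z)=\mathrm{Pc}(z)\times\mathrm{Pc}(z)^\perp$, which ultimately rests on the same existence-and-conjugation-equivariance of parabolic closures. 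One small economy in the paper's version: once one knows $P_i\subseteq C_G(Z)\subseteq N_G(Z)=Z\times Z^\perp$, the case $Z^\perp=\{1\}$ already forces $P_1=Z=P_2$, so the paper never needs Lemma~\ref{lemma:maximal-not-abelian} nor the preliminary reduction to connected nonabelian $\Gamma$ that your route requires in order to rule out the abelian case. For the second assertion, you carry out the splitting $Z_1=G_{\Sigma_a}\times(b_0G_{\Sigma_b}b_0^{-1})$ by hand inside $Z_2\times Z_2^\perp$, while the paper simply cites \cite[Proposition~2.2(2)]{charney2007automorphisms} for the structure of parabolics of a product; these amount to the same computation. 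Both routes are sound.
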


\begin{proof}
For every $i\in\{1,2\}$, the subgroup $Z_i$ is a parabolic subgroup of $G$, so $Z_1\cap Z_2$ is a parabolic subgroup of $G$ by \cite[Proposition~2.6]{DKR}. Let $Z=Z_1\cap Z_2$, and assume towards a contradiction that $Z\neq\{1\}$. Then $P=Z\times Z^{\perp}$ is a join parabolic subgroup of $G$ which contains $P_1$ and $P_2$. By maximality, we have $P_1=P_2=P$, a contradiction.  

We will now prove the last assertion of the lemma, so assume that $Z_1$ and $Z_2$ commute. Then $Z_2$ is a parabolic subgroup of $G$ contained in $Z_1\times Z_1^{\perp}$, so it is a parabolic subgroup of $Z_1\times Z_1^{\perp}$ (as can be derived from \cite[Proposition~2.2(2)]{charney2007automorphisms}). But \cite[Proposition~2.2(2)]{charney2007automorphisms} also ensures that parabolic subgroups of $Z_1\times Z_1^{\perp}$ are of the form $A\times B$, where $A$ is a parabolic subgroup of $Z_1$ and $B$ is a parabolic subgroup of $Z_1^{\perp}$. As $Z_2\cap Z_1=\{1\}$, it follows that $Z_2\subseteq Z_1^{\perp}$. The fact that $Z_2\subseteq Z_1^{\perp}$ follows by symmetry.
\end{proof}

Recall that a countable group $G$ is \emph{ICC} (standing for \emph{infinite conjugacy classes}) if the conjugacy class of every nontrivial element of $G$ is infinite.

\begin{lemma}\label{lemma:icc}
Every right-angled Artin group with trivial center is ICC.
\end{lemma}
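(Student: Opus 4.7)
My plan is to argue by contradiction. Suppose $g \in G_\Gamma$ is a nontrivial element with finite conjugacy class, so that the centralizer $C_{G_\Gamma}(g)$ has finite index in $G_\Gamma$. I will show this forces $g$ to commute with every standard generator, hence $g \in Z(G_\Gamma)$, contradicting the centerlessness hypothesis.

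The argument is then carried out one vertex at a time. Fix a vertex $v$ of $\Gamma$. Using the retraction $r_{\{v\}}: G_\Gamma \to \langle v\rangle$ (described earlier in the section), the generator $v$ has infinite order, so $\langle v\rangle \cap C_{G_\Gamma}(g)$ is a finite-index subgroup of $\langle v\rangle \cong \mathbb{Z}$ and in particular contains some $v^n$ with $n \geq 1$. The commutation relation $g v^n g^{-1} = v^n$ then rewrites as $(g v g^{-1})^n = v^n$. At this point I would invoke the \emph{unique root property} of right-angled Artin groups: for $a, b \in G_\Gamma$ and $n \geq 1$, the equality $a^n = b^n$ forces $a = b$. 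This yields $g v g^{-1} = v$, that is, $v \in C_{G_\Gamma}(g)$. Since $v$ was arbitrary, $C_{G_\Gamma}(g)$ contains every standard generator and hence equals $G_\Gamma$, so $g \in Z(G_\Gamma) = \{1\}$, a contradiction.

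The only nontrivial ingredient, and the main potential obstacle, is the unique root property, which must be cited or briefly justified. This is standard and can be deduced from the biorderability of right-angled Artin groups due to Duchamp and Krob: in any biorderable group the $n$-th power map is injective, since $a < b$ implies $a^n < b^n$ by iterated multiplication on both sides. (Alternative references giving root uniqueness directly inside right-angled Artin groups, such as work of Duncan--Kazachkov--Remeslennikov, are also available, and one could even bypass the issue by noting that centralizers in a right-angled Artin group are known to be closed under taking roots.)
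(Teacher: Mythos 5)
Your proof is correct, and it takes a genuinely different and more elementary route than the paper. The paper decomposes $\Gamma$ as a join $\Gamma_1\circ\cdots\circ\Gamma_k$ of join-indecomposable factors, shows each $G_{\Gamma_i}$ is acylindrically hyperbolic (via \cite{KK} in the connected case, and via the free-product structure otherwise), and then invokes \cite[Theorem~2.35]{DGO} to conclude each factor is ICC; this buys a proof that works in one stroke for a broad class of groups at the cost of appealing to the machinery of acylindrical hyperbolicity. Your argument instead reduces to pure algebra: a finite conjugacy class forces the centralizer $C_G(g)$ to have finite index, so it meets each $\langle v\rangle\cong\mathbb{Z}$ in a subgroup containing some $v^n$, and then a root property upgrades $[g,v^n]=1$ to $[g,v]=1$, forcing $g\in Z(G)=\{1\}$. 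Two small remarks. First, you do not actually need the full unique root property ($a^n=b^n\Rightarrow a=b$): the step only requires the \emph{root-conjugation property} (if $g$ commutes with $v^n$ then $g$ commutes with $v$), which the paper already cites as \cite[Lemma~6.3]{Min} in the proof of Lemma~\ref{lemma:exploting-commuting-gensets}; reusing that reference makes the proof tighter and more self-contained within the paper. Second, your fallback observation that ``centralizers in a right-angled Artin group are closed under taking roots'' is exactly that root-conjugation property, so the two alternatives you offer coincide. Either way, the argument is sound, and it has the advantage of being entirely elementary given biorderability or the Minasyan lemma, whereas the paper's proof piggybacks on a much heavier theorem.
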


\begin{proof}
Let $G$ be a right-angled Artin group with trivial center,  with defining graph $\Gamma$, and let $\Gamma=\Gamma_1\circ \cdots\circ \Gamma_{k}$ be a join decomposition of $\Gamma$ into factors which does not allow any further non-trivial join decomposition. Then each $G_{\Gamma_i}$ has trivial center. It suffices to prove that each $G_{\Gamma_i}$ is ICC. Note that $G_{\Gamma_i}$ is acylindrically hyperbolic in the sense of \cite{Osi}: the case when $\Gamma_i$ is connected follows from \cite[Theorem~30]{KK}, or alternatively from the combination of \cite{Sis} and \cite{Osi}, and the case when $\Gamma_i$ is disconnected follows from the fact that $G_{\Gamma_i}$ splits non-trivially as a free product. Hence $G_{\Gamma_i}$ is ICC by \cite[Theorem~2.35]{DGO}.
\end{proof}

\section{Background on stable orbit equivalence and measured groupoids}

This section reviews material regarding stable orbit equivalence, cocycles and measured groupoids. A familiar reader can directly skip to the next section.

\subsection{Stable orbit equivalence and cocycles}\label{sec:soe}

A \emph{standard Borel space} is a measurable space $X$ which is isomorphic to a Polish topological space (i.e.\ separable and completely metrizable) equipped with its Borel $\sigma$-algebra. By a \emph{standard probability space} we mean a standard Borel space equipped with a Borel measure $\mu$ such that $\mu(X)=1$. In this paper, all actions of countable groups on standard Borel spaces are assumed to be by Borel automorphisms. Given a standard probability space $(X,\mu)$ and a Borel subset $A\subseteq X$ of positive measure, we denote by $\mu_A$ the Borel probability measure on $A$ defined by renormalizing $\mu_{|A}$.

Let $G$ and $H$ be two countable groups, and assume we have a measure-preserving $G$-action on a standard probability space $X$. A measurable map $c:G\times X\to H$ is a \emph{cocycle} if for every $g,g'\in G$ and almost every $x\in X$, one has $c(gg',x)=c(g,g'x)c(g',x)$. The cocycle $c$ is \emph{strict} if this relation holds for all $g,g'\in G$ and \emph{all} $x\in X$. As $G$ is countable, there always exists a $G$-invariant conull Borel subset $X^*\subseteq X$ such that $c_{|G\times X^*}$ is a strict cocycle. Two cocycles $c,c':G\times X\to H$ are \emph{cohomologous} if there exists a measurable map $\varphi:X\to H$ such that for all $g\in G$ and almost every $x\in X$, one has $c'(g,x)=\varphi(gx)c(g,x)\varphi(x)^{-1}$.

We now briefly review the notion of \emph{stably orbit equivalent} group actions, and refer the reader to \cite{Fur-oe} for more information. Let $G\actson (X,\mu)$ and $H\actson (Y,\nu)$ be two free, ergodic, measure-preserving actions on standard probability spaces. A \emph{stable orbit equivalence} between $G\actson X$ and $H\actson Y$ is a measure space isomorphism $f:(U,\mu_U)\to (V,\nu_V)$, where $U\subseteq X$ and $V\subseteq Y$ are positive measure Borel subsets, such that $f((G\cdot x)\cap U)=(H\cdot f(x))\cap V$ for almost every $x\in U$. The \emph{compression constant} of $f$ is defined as $\kappa(f)=\nu(V)/\mu(U)$. Following the exposition from \cite[Section~4]{Vae}, we say that a cocycle $c:G\times X\to H$ is an \emph{SOE cocycle associated to $f$} if there exists a measurable map $p:X\to U$, with $p(x)\in G\cdot x$ for almost every $x\in X$, such that for almost every $x\in X$, $c(g,x)$ is the unique element $h\in H$ such that $f\circ p(g\cdot x)=h \cdot (f\circ p(x))$ (uniqueness comes from freeness of the $H$-action). An SOE cocycle associated to $f$ always exists by ergodicity of the $G$-action (i.e.\ we can always find a map $p$ as above), and any two such cocycles (corresponding to different choices of $p$) are cohomologous. Notice that we can always choose $p$ as above such that $p_{|U}=\mathrm{id}_U$. The two actions $G\actson X$ and $H\actson Y$ are \emph{stably orbit equivalent} if there exists a stable orbit equivalence between them; they are \emph{orbit equivalent} if it can be chosen with $U=X$ and $V=Y$. We mention that two free, ergodic, measure-preserving actions on standard probability spaces are orbit equivalent if and only if there is a stable orbit equivalence between them whose compression constant is equal to $1$, see \cite[Proposition~2.7]{Fur-oe}.

In the above situation, observe that if $A\subseteq G$ and $B\subseteq H$ are subgroups acting ergodically on $X,Y$, and satisfy $f((A\cdot x)\cap U)=(B\cdot f(x))\cap V$, then an SOE cocycle associated to $f$ can always be chosen so that $c_{|A\times X}$ is an SOE cocycle associated to $f$, viewed as a stable orbit equivalence between the actions $A\actson X$ and $B\actson Y$ (in particular $c(A\times X^*)\subseteq B$ for some conull Borel subset $X^*\subseteq X$). Indeed, this is proved by choosing the map $p$ so that $p(x)\in A\cdot x$ for almost every $x\in X$.

\subsection{Background on measured groupoids}

The arguments in Section~\ref{sec:recognition} below rely on earlier work of the first two named authors \cite{HH2}, which is phrased in the language of measured groupoids. In this section we offer a quick review, and refer the reader to \cite[Section~2.1]{AD}, \cite{Kid-survey} or \cite[Section~3]{GH} for more detailed treatments. It is possible to skip this section for now and come back to it when reading Section~\ref{sec:recognition}. 

A \emph{discrete Borel groupoid} is a standard Borel space $\calg$ equipped with two Borel maps $s,r:\calg\to X$ towards a standard Borel space $X$ whose fibers are at most countable, and coming with a measurable (partially defined) composition law, a measurable inverse map, and a unit element $e_x$ per $x\in X$. The space $X$ is called the \emph{base space} of the groupoid, and we think of an element $g\in\calg$ as being an arrow whose source $s(g)$ and range $r(g)$ both belong to $X$ (composition of two arrows $g_1g_2$ makes sense when $s(g_1)=r(g_2)$). A \emph{bisection} of $\calg$ is a Borel subset $B\subseteq\calg$ such that $s_{|B}$ and $r_{|B}$ are injective; it thus defines a Borel isomorphism between two Borel subsets of $X$ (see \cite[Corollary~15.2]{Kec}). A theorem of Lusin and Novikov (see \cite[Theorem~18.10]{Kec}) ensures that any discrete Borel groupoid is covered by countably many pairwise disjoint bisections. A \emph{measured groupoid} is a discrete Borel groupoid $\calg$ whose base space $X$ comes equipped with a \emph{quasi-invariant} finite Borel measure $\mu$, i.e.\ for every bisection $B\subseteq\calg$, one has $\mu(s(B))=0$ if and only if $\mu(r(B))=0$. A measured groupoid $\calg$ is \emph{trivial} if $\calg=\{e_x|x\in X\}$. On the other hand $\calg$ is \emph{of infinite type} if for every Borel subset $U\subseteq X$ of positive measure, and almost every $x\in U$, there are infinitely many elements $g\in\calg$ with $s(g)=x$ and $r(g)\in U$.

In the present paper, the most important example of a measured groupoid is the following. Let $G$ be a countable group which acts on a standard finite measure space $X$ by Borel automorphisms in a measure-preserving way (or merely by preserving the measure class). Then $G\times X$ is naturally a measured groupoid over $X$, with $s(g,x)=x$ and $r(g,x)=gx$. This groupoid is denoted by $G\ltimes X$.

Let now $\calg, X$ and $\mu$ be as above. Every Borel subset $\calh\subseteq\calg$ which is stable under composition and inversion, and contains all unit elements $e_x$, has the structure of a discrete Borel groupoid over $X$, for which $\mu$ is quasi-invariant; we say that $\calh$ is a \emph{measured subgroupoid} of $\calg$. Given two measured subgroupoids $\calh_1,\calh_2\subseteq\calg$, we denote by $\langle\calh_1,\calh_2\rangle$ the subgroupoid generated by $\calh_1$ and $\calh_2$, defined as the smallest measured subgroupoid of $\calg$ that contains $\calh_1$ and $\calh_2$; equivalently, this is the measured subgroupoid of $\calg$ made of all elements that are finite compositions of elements of $\calh_1$ and $\calh_2$. Given any Borel subset $U\subseteq X$, the \emph{restriction} $\calg_{|U}=\{g\in\calg|s(g),r(g)\in U\}$ is naturally a measured groupoid over $U$, with quasi-invariant measure $\mu_{|U}$. 

Given a countable group $G$, a \emph{strict cocycle} $\rho:\calg\to G$ is a Borel map such that for all $g_1,g_2\in\calg$ satisfying $s(g_1)=r(g_2)$ (so that $g_1g_2$ is well-defined), one has $\rho(g_1g_2)=\rho(g_1)\rho(g_2)$. Its \emph{kernel} is $\{g\in\calg|\rho(g)=1\}$, a measured subgroupoid of $\calg$. A strict cocycle $\rho:\calg\to G$ is \emph{action-type} (as in \cite[Definition~3.20]{GH}) if it has trivial kernel, and for every infinite subgroup $H\subseteq G$, the subgroupoid $\rho^{-1}(H)$ is of infinite type. The following example is crucial: if $G$ acts on a standard finite measure space $X$ by Borel automorphisms in a measure-preserving way, then the natural cocycle $G\ltimes X\to G$ is action-type \cite[Proposition~2.26]{Kid-survey}. 

Let now $\calh$ and $\calh'$ be two measured subgroupoids of $\calg$. The subgroupoid $\calh'$ is \emph{stably contained} in $\calh$ (resp.\ \emph{stably equal} to $\calh$) if there exist a conull Borel subset $X^*\subseteq X$ and a partition $X^*=\dunion_{i\in I}X_i$ into at most countably many Borel subsets such that for every $i\in I$, one has $\calh'_{|X_i}\subseteq\calh_{|X_i}$ (resp.\ $\calh'_{|X_i}=\calh_{|X_i}$).

The subgroupoid $\calh$ is \emph{normalized} by $\calh'$ if there exists a conull Borel subset $X^*\subseteq X$ such that $\calh'_{|X^*}$ can be covered by at most countably many bisections $B_n$ in such a way that for every $n$, every $g_1,g_2\in B_n$, and every $h\in\calh'_{|X^*}$ such that $g_2hg_1^{-1}$ is well-defined, one has $h\in\calh$ if and only if $g_2hg_1^{-1}\in\calh$. Here is an example: if $\calg$ comes equipped with a cocycle $\rho:\calg\to G$ towards a countable group, and if $H,H'\subseteq G$ are two subgroups such that $H$ is normalized by $H'$, then $\rho^{-1}(H)$ is normalized by $\rho^{-1}(H')$. The subgroupoid $\calh$ is \emph{stably normalized} by $\calh'$ if there exists a partition $X=\dunion_{i\in I}X_i$ into at most countably many Borel subsets such that for every $i\in\mathbb{N}$, the groupoid $\calh_{|X_i}$ is normalized by $\calh'_{|X_i}$.

We refer to \cite{Kid-survey} for the notion of \emph{amenability} of a measured groupoid, and only record a few properties we will need. Amenability of measured groupoids is stable under passing to subgroupoids and taking restrictions, and under stabilization in the following sense: if there exist a conull Borel subset $X^*\subseteq X$ and a partition $X^*=\dunion_{i\in I}X_i$ into at most countably many Borel subsets such that for every $i\in I$, the groupoid $\calg_{|X_i}$ is amenable, then $\calg$ is amenable (see \cite[Definition~3.33 and Remark~3.34]{GH}). If $\rho:\calg\to G$ is a strict cocycle with trivial kernel towards a countable group $G$, and if $A\subseteq G$ is amenable, then $\rho^{-1}(A)$ is amenable (see e.g.\ \cite[Corollary~3.39]{GH}). 

A measured groupoid $\calg$ over a standard finite measure space $X$ is \emph{everywhere nonamenable} if for every Borel subset $U\subseteq X$ of positive measure, the restricted groupoid $\calg_{|U}$ is nonamenable. The following fact is crucial: if $\rho:\calg\to G$ is a strict action-type cocycle towards a countable group $G$, and if $G$ contains a nonabelian free subgroup, then $\calg$ is everywhere nonamenable \cite[Lemma~3.20]{Kid} (compare also \cite[Remark~3.3]{HH2}).

\section{Monod and Shalom's rigidity theorems}

\subsection{Quotient by a normal subgroup}

The following lemma is extracted from the work of Monod and Shalom \cite{MS}. Its proof comes from \cite[p.862]{MS}; we recall it here for the convenience of the reader.

\begin{lemma}[Monod--Shalom \cite{MS}]\label{lem:MS}
	Let $G,H$ be countable groups, and let $G\actson X$ and $H\actson Y$ be free ergodic measure-preserving actions on standard probability spaces. Assume that they are stably orbit equivalent, let $f:U\to V$ be a stable orbit equivalence between them (where $U\subseteq X$ and $V\subseteq Y$ are positive measure Borel subsets), and let $c:G\times X\to H$ be an SOE cocycle associated to $f$.

	Let $A\unlhd G$ and $B\unlhd H$ be normal subgroups acting ergodically on $X,Y$, and assume that for every $x\in U$, one has $f((A\cdot x)\cap U)= (B\cdot f(x))\cap V$. 

Then there exist a group isomorphism $\alpha:G/A\to H/B$ and a measurable map $\varphi:X\to H$ with $\varphi(x)=e$ for every $x\in U$, such that for every $g\in G$ and almost every $x\in X$, one has $\varphi(gx) c(g,x)\varphi(x)^{-1}\in\alpha(gA)$. 
\end{lemma}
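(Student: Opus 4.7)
The plan is to produce the homomorphism $\alpha$ by reducing the cocycle $c$ modulo the normal subgroup $B$ and averaging across $A$-orbits, using the ergodicity of $A\actson X$. I expect to be able to take $\varphi\equiv e$; the $\varphi$ in the statement just leaves room for a coboundary adjustment if needed.

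\emph{Step 1: choose $p$ inside $A$-orbits.} Using ergodicity of $A\actson X$ together with $\mu(U)>0$, pick a measurable map $\sigma:X\to A$ with $\sigma(x)=e$ on $U$ and $\sigma(x)\cdot x\in U$ for a.e.\ $x\in X$, and set $p(x):=\sigma(x)\cdot x$. This is an admissible choice of section for an SOE cocycle $c$ associated to $f$. For any $a\in A$,
\[
p(ax)=\bigl(\sigma(ax)\,a\,\sigma(x)^{-1}\bigr)\cdot p(x),
\]
with $\sigma(ax)\,a\,\sigma(x)^{-1}\in A$, so $p(ax)$ and $p(x)$ lie in the same $A$-orbit inside $U$. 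The hypothesis $f((A\cdot x)\cap U)=(B\cdot f(x))\cap V$ then forces $f(p(ax))\in B\cdot f(p(x))$, i.e.\ $c(a,x)\in B$ for a.e.\ $x\in X$.

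\emph{Step 2: the quotient cocycle is constant in $x$.} Define $\bar c(g,x):=c(g,x)B\in H/B$. Using the cocycle identity and normality of $A$ in $G$, for $a\in A$ I compute
\[
c(g,ax)=c(ga,x)\,c(a,x)^{-1}=c(gag^{-1},gx)\,c(g,x)\,c(a,x)^{-1}.
\]
Since $gag^{-1}\in A$ and $a\in A$, Step~1 gives $c(gag^{-1},gx),c(a,x)\in B$, and normality of $B$ then yields $\bar c(g,ax)=\bar c(g,x)$. Ergodicity of $A\actson X$ implies $\bar c(g,\cdot)$ is a.e.\ constant; intersecting countably many conull sets indexed by $g\in G$, I obtain $\alpha:G\to H/B$ with $\bar c(g,x)=\alpha(g)$ for every $g$ and a.e.\ $x$. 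The cocycle identity promotes $\alpha$ to a group homomorphism, which is trivial on $A$ by Step~1 and hence factors through a homomorphism $\bar\alpha:G/A\to H/B$.

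\emph{Step 3: bijectivity and conclusion.} For injectivity, suppose $\alpha(g)=B$. Then $c(g,x)\in B$ a.e., so $f(p(gx))\in B\cdot f(p(x))$; running the hypothesis backwards (i.e.\ that $f^{-1}$ carries $B$-orbits in $V$ to $A$-orbits in $U$) gives $p(gx)\in A\cdot p(x)$, whence $gx\in A\cdot x$, and freeness of the $G$-action forces $g\in A$. Surjectivity follows by symmetry: applying Steps~1--2 to the inverse stable orbit equivalence $f^{-1}:V\to U$ produces a homomorphism $\bar\beta:H/B\to G/A$, and the standard compatibility of the SOE cocycles for $f$ and $f^{-1}$ gives $\bar\beta\circ\bar\alpha=\mathrm{id}_{G/A}$ and $\bar\alpha\circ\bar\beta=\mathrm{id}_{H/B}$. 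Finally, setting $\varphi\equiv e$ satisfies $\varphi(x)=e$ on $U$ trivially, and the condition $\varphi(gx)c(g,x)\varphi(x)^{-1}=c(g,x)\in\alpha(gA)$ is exactly what Step~2 produced.

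The main obstacle will be the bijectivity of $\bar\alpha$, particularly surjectivity: one has to arrange compatible choices of the sections $\sigma$ on the $X$- and $Y$-sides so that the two SOE cocycles are genuinely inverse to one another at the level of $G/A\leftrightarrow H/B$. Everything else reduces to bookkeeping with the cocycle identity and normality of $A,B$.
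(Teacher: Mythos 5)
Your argument is correct in outline and takes a genuinely different route from the paper's. Steps~1 and~2 are clean: after adapting the section so that $p(x)\in A\cdot x$, the hypothesis forces $c(A\times X)\subseteq B$, and the computation
\[
c(g,ax)=c(gag^{-1},gx)\,c(g,x)\,c(a,x)^{-1},\qquad a\in A,
\]
together with normality of $B$ shows $\bar c(g,\cdot):X\to H/B$ is $A$-invariant, hence a.e.\ constant; the cocycle identity makes $g\mapsto\bar c(g,\cdot)$ a homomorphism $\alpha:G\to H/B$ killing $A$. Injectivity via freeness is also fine. The piece you correctly flag as the crux — surjectivity — does go through, but it is not ``just bookkeeping'' that you can wave at: you must actually run Steps~1--2 for the inverse stable orbit equivalence $f^{-1}$ with a section $q$ satisfying $q(y)\in B\cdot y$ and $q_{|V}=\mathrm{id}$, and then verify $\bar\beta\circ\bar\alpha=\mathrm{id}$ on a positive-measure set. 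Concretely, for $x\in U$, $y=f(x)$, and $h_0\in\alpha(g)$, one writes $c(g,x)=h_0b$, computes $q(h_0y)\in B\cdot f(p(gx))$ using normality of $B$, applies the orbit hypothesis at the point $p(gx)\in U$ to land in $A\cdot p(gx)$, and finally uses normality of $A$ to conclude $c'(h_0,y)\in gA$; since $\bar\beta(h_0B)$ is a.e.\ constant this pins it down. This verification is the content you are missing, so the proof as written is incomplete but on a sound path.

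By contrast, the paper (following Monod--Shalom) works in the measure equivalence coupling $\Sigma=X\times H$ with the $(G\times H)$-action $(g,h)\cdot(x,k)=(gx,c(g,x)kh^{-1})$, shows that $BX_e$ is an $(A\times B)$-invariant ergodic piece, and identifies the ergodic component space $\bar\Sigma$ with $H/B$ on which $\bar H$ acts simply transitively; invoking Furman's theorem that $\Sigma$ is also $Y\times G$ gives, by symmetry, that $\bar G$ acts simply transitively as well, and bijectivity of $\alpha$ drops out of simple transitivity on both sides with no inverse-cocycle bookkeeping. The trade-off is conceptual machinery (the coupling and Furman's identification) in exchange for an automatic bijectivity; your approach is more elementary and self-contained but must earn surjectivity by hand. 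Two smaller remarks: (i) your claim ``I expect to be able to take $\varphi\equiv e$'' is only valid when the given SOE cocycle $c$ is already adapted to a section $p$ with $p_{|U}=\mathrm{id}$; in general $\varphi$ is the coboundary relating $c$ to the cocycle $c_0$ you build from $\sigma$, and $\varphi_{|U}=e$ then follows from both sections restricting to the identity on $U$ (this is also implicit in the paper's ``up to replacing $c$ by a cohomologous cocycle''). (ii) In Step~1, note that once you know $c(a,x)\in B$ for $a\in A$, you should also check that $\alpha$ is trivial on $A$ \emph{as a homomorphism}; this is immediate but worth saying, since it is what lets $\alpha$ factor through $G/A$.
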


\begin{proof}
As observed in Section~\ref{sec:soe}, up to replacing $c$ by a cohomologous cocycle, and $X$ by a conull $G$-invariant Borel subset, we can (and will) assume that $c(A\times X)\subseteq B$. Likewise, up to replacing $Y$ by a conull $H$-invariant subset, we can choose an SOE cocycle $c':H\times Y\to G$ associated to the stable orbit equivalence $f^{-1}:V\to U$ between $H\actson Y$ and $G\actson X$, so that $c'(B\times Y)\subseteq A$.

Let $\Sigma=X\times H$, equipped with the measure-preserving action of $G\times H$ given by $(g,h)\cdot (x,k)= (gx,c(g,x)kh^{-1})$. Letting $X_e=X\times\{e\}$ (which is a fundamental domain for the $H$-action on $\Sigma$), we observe that $A X_e\subseteq B X_e$, so $A B X_e=BAX_e\subseteq B BX_e= BX_e$, thus $BX_e$ is invariant under $A\times B$. In addition, from the ergodicity of the $A$-action on $X$, we deduce that the action of $A\times B$ on $BX_e$ is ergodic. Moreover, for every $h\in H$, we have $hBX_e=BX_e$ if and only if $h\in B$, and otherwise $hBX_e\cap BX_e=\emptyset$. In addition, the union of all $H$-translates of $BX_e$ cover $\Sigma$. This proves that $\bar H=H/B$ acts simply transitively on the space $\bar\Sigma$ of ergodic components of the action of $A\times B$ on $\Sigma$. 

By \cite[Theorem~3.3]{Fur-oe}, the space $\Sigma$ is measurably isomorphic to $Y\times G$, equipped with the measure-preserving action of $G\times H$ given by $(g,h)\cdot (y,k)=(hy,c'(h,y)kg^{-1})$. A symmetric argument then shows that $\bar G=G/A$ also acts simply transitively on $\bar \Sigma$.

Therefore, there exist an isomorphism $\alpha:\bar G\to \bar H$, and a measurable isomorphism $\bar{\Sigma}\approx\bar H$ sending $BX_e$ to $e$, such that the action of $\bar G\times\bar H$ on $\bar{\Sigma}$ is given by $(\bar g,\bar h)\cdot\bar{k}=\alpha(\bar g)\bar k\bar h^{-1}$ through this identification. We also have a $(G\times H)$-equivariant Borel map $\Phi:\Sigma\to\bar H$ (sending $BX_e$ to $e$). 

The equivariance of $\Phi$ shows that for all $g\in G$ and almost every $x\in X$, one has $\Phi(g(x,e))=\alpha(\bar g)$, i.e.\ $\Phi(gx,c(g,x))=\alpha(\bar g)$. Letting $h\in H$ be such that $\alpha(\bar g)=\bar h$, we deduce that $\Phi(gx,c(g,x)h^{-1})=e$. This shows that $c(g,x)h^{-1}\in B$, i.e.\ $c(g,x)\in \alpha(gA)$, as desired.
\end{proof}

\subsection{Direct products} 

Following \cite[Notation~1.2]{MS}, we let $\Creg$ be the class of all countable groups $\Gamma$ such that $\bddc(\Gamma,\ell^2(\Gamma))\neq 0$. By \cite[Corollary~1.8]{CFI}, every nonabelian right-angled Artin group which does not split nontrivially as a direct product belongs to the class $\Creg$ (this also follows from \cite{Ham,HO} and the fact that these groups are acylindrically hyperbolic, see \cite{KK} or \cite{Sis,Osi}).

\begin{theo}[{Monod--Shalom \cite[Theorem~2.17]{MS}}]\label{theo:monod-shalom-product}
Let $m,n\ge 2$, and let $G_1,\dots,G_m$ and $H_1,\dots,H_n$ be torsion-free countable groups in $\Creg$. Let $G=G_1\times\dots\times G_m$ and $H=H_1\times\dots\times H_n$. 

Let $G\actson X$ and $H\actson Y$ be two free, ergodic, measure-preserving actions on standard probability spaces. Assume that all groups $G_i$ act ergodically on $X$, and all groups $H_j$ act ergodically on $Y$, and the actions are stably orbit equivalent (via a stable orbit equivalence $f:U\to V$). 

Then $\kappa(f)=1$, the actions $G\actson X$ and $H\actson Y$ are conjugate through a group isomorphism between $G$ and $H$, and every SOE cocycle $G\times X\to H$ is cohomologous to a group isomorphism.
\end{theo}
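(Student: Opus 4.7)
The plan is to apply the Monod--Shalom cocycle superrigidity theorem for products of groups in $\Creg$ in both directions, and then combine the resulting homomorphisms into an isomorphism that conjugates the two actions. First, I would apply Monod--Shalom's cocycle superrigidity to the SOE cocycle $c : G \times X \to H$ associated to $f$: since $G = G_1 \times \dots \times G_m$ with $m \geq 2$, each $G_i$ torsion-free and in $\Creg$, and each $G_i$ acting ergodically on $X$, the theorem produces a group homomorphism $\rho : G \to H$ together with a measurable $\varphi : X \to H$ such that $\rho(g) = \varphi(gx)\, c(g,x)\, \varphi(x)^{-1}$ for all $g \in G$ and a.e.\ $x \in X$.

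Using the measurable selector $p : X \to U$ (chosen so that $p_{|U} = \mathrm{id}_U$) attached to the SOE cocycle $c$, I would then define the straightened map $F : X \to Y$ by $F(x) = \varphi(x) \cdot f(p(x))$. A direct computation using the coboundary relation above and the defining identity $f(p(gx)) = c(g, x) f(p(x))$ yields $F(gx) = \rho(g) F(x)$ for a.e.\ $x$, so that $F$ intertwines the $G$-actions on $X$ and $Y$ via $\rho$. Applying the same theorem in the reverse direction to the SOE cocycle $c' : H \times Y \to G$ associated with $f^{-1}$ (whose hypotheses hold by the symmetric assumptions on $H$), I obtain a homomorphism $\rho' : H \to G$ and a straightening $F' : Y \to X$ satisfying $F'(hy) = \rho'(h) F'(y)$.

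The next step is to verify that $\rho$ and $\rho'$ are mutually inverse group isomorphisms. The composition $F' \circ F : X \to X$ intertwines the $G$-action with the action via $\rho' \circ \rho$; together with the SOE identity $c'(c(g, x), gx) = g$ (which holds up to a coboundary coming from the choice of selectors), this forces $\rho' \circ \rho$ to be an inner automorphism of $G$, and symmetrically for $\rho \circ \rho'$. Torsion-freeness of $H$ together with the product structure then rules out nontrivial kernels: if $\rho(g) = e$ for some $g \neq e$, then $F$ factors through $X / \langle g \rangle$, and freeness and ergodicity of the $G$-action on $X$ force $g = e$. After an inner-automorphism adjustment on $\rho'$, I can arrange $\rho' = \rho^{-1}$, so that $\rho$ is a group isomorphism.

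Finally, since $\rho$ is an isomorphism, the pushforward $F_* \mu$ is an $H$-invariant probability measure on $(Y, \nu)$, hence equals $\nu$ by ergodicity. Then $F : (X, \mu) \to (Y, \nu)$ is a measure-preserving $G$-equivariant map (with $G$ acting on $Y$ via $\rho$), and a measure space isomorphism by freeness of the $H$-action, yielding the claimed conjugacy; the compression constant $\kappa(f) = 1$ is then automatic. Any SOE cocycle is cohomologous to $c$ and hence to the group homomorphism $\rho$. The main obstacle I expect is arranging that the two applications of cocycle superrigidity produce homomorphisms that are genuinely mutually inverse rather than only inverse up to an inner automorphism; this is precisely where torsion-freeness of $G$ and $H$ is essential, since it rules out any residual twisting in the composition $\rho' \circ \rho$.
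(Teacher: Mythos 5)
This statement is imported from Monod--Shalom as a black box (the paper supplies no proof, only the citation to \cite[Theorem~2.17]{MS}), so there is nothing internal to the paper to compare against. The question is therefore whether your sketch is a correct derivation, and I believe it has a genuine gap at the very first step.

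You invoke ``Monod--Shalom's cocycle superrigidity'' to conclude that the SOE cocycle $c:G\times X\to H$ is cohomologous to a group homomorphism $\rho:G\to H$, using only that $G=G_1\times\dots\times G_m$ with $m\ge 2$, each $G_i$ torsion-free in $\Creg$ and acting ergodically, and that $H$ is some countable group. That is not the hypothesis of Monod--Shalom's cocycle superrigidity theorem. Their results that untwist a cocycle with values in an \emph{arbitrary} countable target group require the $G$-action to be \emph{mildly mixing}, not merely irreducible; with irreducibility alone one only controls the cocycle after also exploiting the product structure and irreducibility of $H\actson Y$. Indeed, the fact that Theorem~\ref{theo:monod-shalom-product} needs both $G$ and $H$ to be products of $\Creg$-groups with irreducible actions is precisely the point: for irreducible but not mildly mixing actions there is no one-sided cocycle superrigidity into a general target. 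Monod and Shalom prove Theorem~2.17 via their measure-equivalence coupling/factor machinery, which is a two-sided argument using both product decompositions simultaneously, not a one-sided untwisting step followed by symmetry. So the homomorphism $\rho$ that your whole construction rests on is not actually available under the stated hypotheses.

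A secondary issue, which would need tightening even if the first step were granted: the relation between $c$ and the reverse SOE cocycle $c'$ is not $c'(c(g,x),gx)=g$ (the selectors $p$ and $p'$ intervene nontrivially), and the leap from this to ``$\rho'\circ\rho$ is an inner automorphism'' is not justified as written. The argument that $\ker\rho$ is trivial from ``$F$ factors through $X/\langle g\rangle$'' is likewise incomplete: invariance of $F$ under $\langle g\rangle$ does not immediately contradict anything until you have already shown $F$ is essentially injective, which is part of what you are trying to prove. These are fixable by a more careful bookkeeping (along the lines of \cite[Lemma~4.7]{Vae}, which the present paper uses elsewhere), but the underlying cocycle-untwisting input is the real obstruction.
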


\section{Exploiting chain-commuting generating sets}\label{sec:commuting-chain}

We start with an elementary lemma.

\begin{lemma}\label{lemma:cocycle-homomorphism}
Let $G$ and $H$ be groups, with $G$ countable. Let $G\actson X$ be a measure-preserving $G$-action on a standard probability space $X$, and let $c:G\times X\to H$ be a cocycle. Let $S\subseteq G$ be a generating set for $G$. Assume that there exists a conull Borel subset $X^*\subseteq X$ such that for every $s\in S$, the value of $c(s,\cdot)_{|X^*}$ is constant.

Then there exists a group homomorphism $\alpha:G\to H$ such that for every $g\in G$ and almost every $x\in X$, one has $c(g,x)=\alpha(g)$.
\end{lemma}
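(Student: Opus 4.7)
The plan is to define $\alpha$ first on the generating set $S$, extend it to $G$ by the cocycle relation, and verify well-definedness and the a.e.\ equality using countability. Since $G$ is countable, after passing to a $G$-invariant conull Borel subset we may assume $c$ is a strict cocycle. For each $s\in S$, let $\alpha(s)\in H$ be the value taken by $c(s,\cdot)$ on $X^*$. For $s\in S$, the cocycle identity $c(s^{-1},x)=c(s,s^{-1}x)^{-1}$ and $G$-invariance of the measure class of $X^*$ imply that $c(s^{-1},\cdot)$ is also a.e.\ constant, with value $\alpha(s)^{-1}$; so we define $\alpha$ consistently on $S\cup S^{-1}$.

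Next I would extend to all of $G$ by induction on word length. Fix $g\in G$ and write $g=s_1\cdots s_n$ with $s_i\in S\cup S^{-1}$. Iterating the cocycle identity gives
\[
c(s_1\cdots s_n,x)=c(s_1,s_2\cdots s_n x)\cdot c(s_2,s_3\cdots s_n x)\cdots c(s_n,x).
\]
Each of the $n$ factors on the right is, by the first step, equal to $\alpha(s_i)$ off a $\mu$-null set; since the $G$-action preserves $\mu$, the pullbacks of these null sets under the relevant translations remain null, and their finite union is null. Thus on a conull set $X_g\subseteq X$ depending on the chosen word, $c(g,x)=\alpha(s_1)\cdots\alpha(s_n)$. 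This shows in particular that the product $\alpha(s_1)\cdots\alpha(s_n)$ depends only on the group element $g$, not on the chosen word representative: if $s_1\cdots s_n=s'_1\cdots s'_m$ in $G$, then on the intersection of the two conull sets the two products must both equal $c(g,x)$, hence coincide in $H$. Define $\alpha(g)\in H$ to be this common value.

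Well-definedness already gives the homomorphism property, since concatenation of a word for $g_1$ and a word for $g_2$ is a word for $g_1g_2$. Finally, intersecting the countably many conull sets $X_g$ ($g\in G$) yields a single conull set on which $c(g,x)=\alpha(g)$ simultaneously for every $g\in G$, which is the desired conclusion.

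No step looks genuinely difficult: the only mild care needed is to remember that the chain of substitutions $x\mapsto s_{i+1}\cdots s_n x$ must preserve the null set where $c(s_i,\cdot)$ fails to be constant, which is immediate from the fact that each element of $G$ acts by a measure-preserving (hence null-set preserving) Borel automorphism. The countability of $G$ is used both to pass to a conull set on which $c$ is strict and to intersect the conull sets $X_g$ at the end.
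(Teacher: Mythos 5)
Your proof is correct and takes essentially the same approach as the paper: first establish that $c(g,\cdot)$ is a.e.\ constant for every $g\in G$ by iterating the cocycle identity along a word in the generators, then define $\alpha(g)$ as the essential value and observe that the cocycle identity forces $\alpha$ to be a homomorphism. The paper's proof is simply a terser version of what you wrote, leaving the word-length induction, the treatment of inverses of generators, and the intersection of countably many conull sets implicit.
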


\begin{proof}
The fact that $c(g,\cdot)$ is almost everywhere constant follows from the same fact for $g\in S$ together with our assumption that $S$ generates $G$. Letting $\alpha:G\to H$ be defined by sending $g$ to the essential value of $c(g,\cdot)$, the fact that $c$ is a cocycle implies that $\alpha$ is a homomorphism, completing the proof.
\end{proof}

A generating set $S$ of a group $G$ is \emph{chain-commuting} if the graph whose vertex set is $S$, with one edge between two vertices if the corresponding elements of $G$ commute, is connected. Notice that a group $G$ has a finite chain-commuting generating set if and only if it is a quotient of a one-ended right-angled Artin group (defined over a finite simple graph $\Gamma$). Interestingly, having a finite chain-commuting generating set whose elements have infinite order is a condition that has already been successfully exploited in various contexts in measured group theory: for instance Gaboriau proved in \cite[Critères VI.24]{Gab-cost} that it forces all free probability measure-preserving actions of $G$ to have cost $1$; see also \cite{AGN} for a more recent use. 

We say that a group $H$ has the \emph{root-conjugation property} if for every $h_1,h_2\in H$ and every integer $k>0$, if $h_1$ commutes with $h_2^k$, then $h_1$ commutes with $h_2$. 

\begin{lemma}\label{lemma:chain-commutative}
Let $G$ and $H$ be countable groups. Assume that $H$ satisfies the root-conjugation property. Let $G\actson X$ be a measure-preserving $G$-action on a standard probability space $X$, and let $c:G\times X\to H$ be a cocycle. Let $S$ be a generating set of $G$. Assume that 
\begin{enumerate}
\item $S$ is chain-commuting,
\item every element of $S$ acts ergodically on $X$, and
\item there exist $s\in S$ and a conull Borel subset $X^*\subseteq X$ such that $c(s,\cdot)_{|X^*}$ is constant.
\end{enumerate}
Then there exists a group homomorphism $\alpha:G\to H$ such that for every $g\in G$ and almost every $x\in X$, one has $c(g,x)=\alpha(g)$.
\end{lemma}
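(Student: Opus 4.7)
The plan is to reduce to Lemma~\ref{lemma:cocycle-homomorphism} by showing that for every $t \in S$, the value $c(t,\cdot)$ is essentially constant. By hypothesis this already holds for some $s \in S$; since the chain-commuting graph on $S$ is connected, the result will follow by a finite induction along commuting edges of that graph. The core of the argument is therefore the following propagation step, which I would establish first: if $s, t \in S$ commute, $s$ acts ergodically on $X$, and $c(s, \cdot) \equiv h_s$ is essentially constant, then $c(t, \cdot)$ is also essentially constant.

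For the propagation step I would set $\varphi(x) := c(t, x)$. Applying the cocycle identity to both sides of $st = ts$ gives, after a short manipulation, a twisted equivariance relation
\[
\varphi(sx) \;=\; h_s\, \varphi(x)\, h_s^{-1} \quad \text{for almost every } x \in X.
\]
Consequently the $\langle h_s \rangle$-conjugacy orbit of $\varphi(x)$ depends $s$-invariantly on $x$, and so by ergodicity of $s$ there is a single $\langle h_s \rangle$-conjugacy orbit $\mathcal{O} \subseteq H$ containing $\varphi(x)$ for almost every $x$.

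The hard part will be to promote $\mathcal{O}$ from a full orbit to a single point; this is where the root-conjugation hypothesis on $H$ enters, combined with a measure-theoretic argument. If $\mathcal{O}$ is finite, I would pick any $h_0 \in \mathcal{O}$: some positive power $h_s^k$ centralizes $h_0$, so the root-conjugation property forces $h_s$ itself to centralize $h_0$, and hence $\mathcal{O} = \{h_0\}$, making $\varphi$ essentially constant. If instead $\mathcal{O}$ is infinite, then $h_s$ has infinite order and conjugation by $h_s$ acts freely on $\mathcal{O}$, so one gets a well-defined measurable $n : X \to \mathbb{Z}$ with $\varphi(x) = h_s^{n(x)} h_0 h_s^{-n(x)}$ almost everywhere; the twisted equivariance then rewrites as $n(sx) = n(x)+1$ a.e. The level sets $A_k := n^{-1}(k)$ would then form a countable measurable partition of a conull subset of $X$ that is permuted by $s$ with $\mu(A_k)$ independent of $k$, which is incompatible with $\mu(X)=1$. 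This contradiction rules out the infinite case and finishes the propagation step; the chain-commuting induction, followed by Lemma~\ref{lemma:cocycle-homomorphism}, then yields the desired homomorphism $\alpha : G \to H$.
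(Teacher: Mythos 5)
Your proposal is correct, and it takes a genuinely different route from the paper's after the common opening move. Both proofs reduce to the propagation step (if $c(s,\cdot)$ is essentially constant and $t$ commutes with $s$, then $c(t,\cdot)$ is essentially constant) and both derive the same twisted relation from the cocycle identity on $st=ts$; they then diverge. The paper's proof partitions $X^*$ into pieces $X_i$ on which $c(t,\cdot)$ takes a constant value $\alpha_i$, uses ergodicity (recurrence) of $s$ to move between any two pieces of positive measure, writes the resulting identity $\alpha_j\beta_s^{k_{ij}}=\beta_s^{k_{ij}}\alpha_i$, extracts commutation of each $\alpha_i$ with $\beta_s$ from the $i=j$ case via root-conjugation, and then concludes $\alpha_i=\alpha_j$ directly. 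Your proof instead rephrases the relation as $\varphi(sx)=h_s\varphi(x)h_s^{-1}$, observes that the $\langle h_s\rangle$-conjugacy orbit of $\varphi(x)$ is an $s$-invariant measurable function into a countable set and hence essentially constant, disposes of the finite-orbit case by root-conjugation (forcing the orbit to be a singleton), and then kills the infinite-orbit case by the drift argument $n(sx)=n(x)+1$, which is incompatible with a finite invariant measure. Both arguments are valid; the paper's is the more elementary bookkeeping of the cocycle equation, while yours isolates the freeness of the conjugation action (which is exactly where root-conjugation enters) and then invokes a clean, essentially Rokhlin-type measure-theoretic contradiction. Either can be carried out rigorously, and your version makes the role of ergodicity plus measure preservation slightly more conceptual, at the cost of the extra dichotomy on $|\mathcal{O}|$.

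One small point you should make explicit when writing this up: the freeness of the conjugation action on the infinite orbit $\mathcal{O}$ is itself a consequence of root-conjugation (if $h_s^n$ fixed some $h_0\in\mathcal{O}$ with $n\neq 0$, then $h_s$ would fix $h_0$ and $\mathcal{O}$ would be a singleton), so root-conjugation is used twice, once in each branch of the dichotomy. You should also note, as the paper does, that one passes to a $G$-invariant conull set on which the cocycle is strict before running the argument, so that the a.e.\ relations can be used pointwise where needed.
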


\begin{proof}
Let $s\in S$ be as in assertion~3, and denote by $\beta_s$ the constant value of $c(s,\cdot)$ on $X^*$. We claim that for every $u\in S$ which commutes with $s$, the value $c(u,\cdot)$ is constant on a conull Borel subset of $X$. As $S$ is chain-commuting, arguing inductively will then ensure that the same is true for all $u\in S$, and as $G$ is countable the conull Borel subset of $X$ can be chosen independent of $u$. The conclusion will then follow from Lemma~\ref{lemma:cocycle-homomorphism}.

We now prove the above claim. Up to replacing $X^*$ by a further conull Borel subset (which we can assume to be $G$-invariant), we will assume that the cocycle $c$ is strict. Let $X^*=\sqcup_{i\in I}X_i$ be a partition into at most countably many Borel subsets such that for each $i$, the value of $c(u,\cdot)$ is constant when restricted to $X_i$ -- we denote it by $\alpha_i$. Let $i,j\in I$ be such that $X_i$ and $X_j$ have positive measure (possibly with $i=j$). As $s$ acts ergodically on $X$, there exist an integer $k_{i,j}\neq 0$ and $x\in X_i$ such that $s^{k_{i,j}}x\in X_j$. As $u$ and $s^{k_{i,j}}$ commute, we have $c(us^{k_{i,j}},x)=c(s^{k_{i,j}}u,x)$. Thus $c(u,s^{k_{i,j}}x)c(s^{k_{i,j}},x)=c(s^{k_{i,j}},ux)c(u,x)$, in other words 
\begin{equation}\label{eq:commute}
\alpha_j\beta_s^{k_{i,j}}=\beta_s^{k_{i,j}}\alpha_i.
\end{equation} 
Letting $i=j$, we see that $\alpha_i$ commutes with $\beta_s^{k_{i,j}}$. By the root-conjugation property, it follows that $\alpha_i$ and $\beta_s$ commute. Using Equation~\eqref{eq:commute} again with $i,j$ arbitrary, we see that $\alpha_i=\alpha_j$ whenever both $X_i$ and $X_j$ have positive measure. In other words, the value $c(u,\cdot)$ is almost everywhere constant.
\end{proof}

In the present paper, Lemma~\ref{lemma:chain-commutative} will be applied to the setting of right-angled Artin groups in the following way.

\begin{lemma}\label{lemma:exploting-commuting-gensets}
Let $G,H$ be two right-angled Artin groups, with $G$ one-ended. Let $G\actson X$ and $H\actson Y$ be two free, ergodic, measure-preserving actions on standard probability spaces, and assume that there is a stable orbit equivalence $f$ between $G\actson X$ and $H\actson Y$, with compression constant $\kappa(f)\ge 1$. Assume that $G\actson X$ is irreducible, and let $S$ be a standard generating set of $G$ (given by an isomorphism to some $G_\Gamma$) such that all elements of $S$ act ergodically on $X$. Let $c:G\times X\to H$ be an SOE cocycle associated to $f$.

If $c$ is cohomologous to a cocycle $c'$ for which there exists $s\in S$ such that $c'(s,\cdot)$ is almost everywhere constant, then $\kappa(f)=1$, the cocycle $c$ is cohomologous to a group isomorphism $\alpha:G\to H$, and the actions are conjugate through $\alpha$. 
\end{lemma}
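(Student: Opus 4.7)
The plan is to first invoke Lemma \ref{lemma:chain-commutative} to upgrade the assumption on $c'$ into the statement that $c'$ (hence $c$) is cohomologous to a group homomorphism $\alpha\colon G\to H$, then to build an $\alpha$-equivariant measurable map $F\colon X\to Y$ out of $\tilde f$, and finally to use the compression hypothesis $\kappa(f)\ge 1$ to promote $\alpha$ to an isomorphism and force $\kappa(f)=1$.

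To apply Lemma \ref{lemma:chain-commutative}, I verify its three hypotheses. Since $G$ is one-ended, its defining graph $\Gamma$ is connected, and two standard generators of $G_\Gamma$ commute precisely when their vertices are joined by an edge of $\Gamma$; hence the commuting graph on $S$ is $\Gamma$ itself, so $S$ is chain-commuting. Irreducibility of $G\actson X$ is exactly the statement that every element of $S$ acts ergodically on $X$, and the assumption of the present lemma provides $s\in S$ with $c'(s,\cdot)$ almost everywhere constant. The remaining input is the root-conjugation property for the target group $H$. This holds in every right-angled Artin group as a consequence of the unique-roots property (if $g^n=h^n$ for some nonzero $n$ then $g=h$, proved via normal forms or the $\mathrm{CAT}(0)$ geometry of the Salvetti complex): unique roots immediately imply $C_H(h^k)=C_H(h)$ for every nontrivial $h\in H$ and every $k\ne 0$, which is root-conjugation. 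Lemma \ref{lemma:chain-commutative} then yields a group homomorphism $\alpha\colon G\to H$ with $c'(g,x)=\alpha(g)$ almost everywhere, and since $c$ and $c'$ are cohomologous there exists a measurable $\varphi\colon X\to H$ satisfying $\alpha(g)=\varphi(gx)\,c(g,x)\,\varphi(x)^{-1}$ for every $g\in G$ and almost every $x\in X$.

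To extract a conjugation, I fix a Borel section $p\colon X\to U$ of the SOE with $p(x)\in G\cdot x$ and $p|_U=\mathrm{id}_U$, set $\tilde f=f\circ p\colon X\to V$, and define $F\colon X\to Y$ by $F(x)=\varphi(x)\,\tilde f(x)$. A direct computation using $\tilde f(gx)=c(g,x)\tilde f(x)$ together with the cohomology relation yields $F(gx)=\alpha(g)\,F(x)$ for every $g\in G$ and almost every $x\in X$. Consequently, $F_*\mu$ is an $\alpha(G)$-invariant probability measure on $Y$, ergodic under $\alpha(G)$ because $G$ is ergodic on $X$, and supported on the $\alpha(G)$-invariant Borel set $F(X)\subseteq Y$. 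By ergodicity of $H\actson Y$, the $H$-saturation of $F(X)$ is $\nu$-conull; choosing a transversal $\{h_i\}$ of $\alpha(G)\backslash H$ gives an essentially disjoint decomposition $Y=\bigsqcup_i h_i F(X)$, which forces $[H:\alpha(G)]<\infty$ and $\nu(F(X))=1/[H:\alpha(G)]$.

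The final step invokes the compression-index identity $\kappa(f)\cdot[H:\alpha(G)]=1$, standard SOE bookkeeping once one has a cocycle cohomologous to a homomorphism: the $H/\alpha(G)$-fold ``folding'' of $F$ relative to $\tilde f$ accounts precisely for the measure ratio $\nu(V)/\mu(U)=\kappa(f)$. Together with the hypothesis $\kappa(f)\ge 1$, this forces $[H:\alpha(G)]=1$, so $\alpha$ is surjective and $\kappa(f)=1$. Injectivity of $\alpha$ is then automatic: if $\alpha(g)=e$, then $F(gx)=F(x)$ almost everywhere, while $F$ is essentially injective (since $\alpha(G)=H$ turns $F_*\mu=\nu$ into a probability measure on all of $Y$ of the same total mass as $\mu$), so freeness of $G\actson X$ gives $g=e$. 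Hence $\alpha$ is a group isomorphism and $F$ is a measure-preserving Borel isomorphism conjugating the two actions through $\alpha$. The main obstacle in this argument is the first step, where three rigidity ingredients must converge to unlock Lemma \ref{lemma:chain-commutative}, namely one-endedness (for chain-commutativity of $S$), irreducibility (for generator-wise ergodicity), and the root-conjugation property for RAAGs; the later steps are routine, but the compression-index identity is the crucial lever that converts the one-sided bound $\kappa(f)\ge 1$ into the equality $\kappa(f)=1$.
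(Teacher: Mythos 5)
Your first step is exactly the paper's: verify the hypotheses of Lemma~\ref{lemma:chain-commutative} (chain-commutativity of $S$ from one-endedness of $G$, generator-wise ergodicity from irreducibility, and the root-conjugation property for $H$ — your derivation of the latter from unique roots is a valid alternative to the paper's citation of Minasyan) and conclude that $c$ is cohomologous to a group homomorphism $\alpha\colon G\to H$. The difference is in the second half. The paper then simply invokes Vaes \cite[Lemma~4.7]{Vae}, which, using torsion-freeness of $G$, yields in one stroke that $\alpha$ is injective, that $\alpha(G)$ has finite index in $H$, that $\kappa(f)=1/[H:\alpha(G)]$, and that $H\actson Y$ is conjugate to the induced action; combining the last formula with $\kappa(f)\ge 1$ finishes the proof. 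You instead attempt to re-derive this lemma from scratch, and while the strategy (build an $\alpha$-equivariant $F\colon X\to Y$ and do measure-counting) is the correct one, several steps are asserted without proof. In particular: (i) you need $F_*\mu\ll\nu$ before ergodicity can force $F_*\mu$ to be the right measure, and this requires an argument (e.g.\ decomposing $U$ along level sets of $\varphi$ and using that $f$ is a measure-scaling isomorphism, then spreading by equivariance); (ii) the essential disjointness of the translates $h_iF(X)$ is not automatic and needs to be justified; (iii) the ``compression-index identity'' $\kappa(f)\cdot[H:\alpha(G)]=1$ is precisely the nontrivial content being proved, not background bookkeeping; and (iv) the inference that $F$ is essentially injective because $F_*\mu=\nu$ is false in general (a measure-preserving map between probability spaces need not be injective), so your injectivity argument for $\alpha$ does not go through as written. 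All four points are exactly what Vaes's lemma packages; citing it, as the paper does, is both cleaner and safer than re-proving it in passing.
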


\begin{proof}
Right-angled Artin groups have the root-conjugation property, as follows from \cite[Lemma~6.3]{Min}. In addition, the standard generating set $S$ of $G$ is chain-commuting (because $G$ is one-ended, i.e.\ its defining graph $\Gamma$ is connected), and by assumption every element of $S$ acts ergodically on $X$. Lemma~\ref{lemma:chain-commutative} therefore implies that $c$ is cohomologous to a group homomorphism $\alpha:G\to H$. As $G$ is torsion-free, it follows from \cite[Lemma~4.7]{Vae} that $\alpha$ is injective, $\alpha(G)$ has finite index in $H$, the action $H\actson Y$ is conjugate to the action induced from $G\actson X$, and $\kappa(f)=\frac{1}{[H:\alpha(G)]}$. As $\kappa(f)\ge 1$, we deduce that $[H:\alpha(G)]=1$ (in particular $\alpha$ is a group isomorphism) and \cite[Lemma~4.7]{Vae} ensures that the actions $G\actson X$ and $H\actson Y$ are conjugate through $\alpha$.  
\end{proof}

\section{Recognition lemmas}\label{sec:recognition}

\subsection{Review of parabolic supports}

The following notion was introduced in \cite[Section~3.3]{HH2}. Let $\calg$ be a measured groupoid over a standard finite measure space $X$, equipped with a strict cocycle $\rho:\calg\to G$, where $G$ is a right-angled Artin group. Fix an identification $G=G_\Gamma$, and let $\mathbb{P}$ be the set of all parabolic subgroups of $G$ with respect to this identification. Given $P\in\mathbb{P}$, we say that $(\calg,\rho)$ is \emph{tightly $P$-supported} if 
\begin{enumerate}
\item there exists a conull Borel subset $X^*\subseteq X$ such that $\rho(\calg_{|X^*})\subseteq P$, and
\item for every parabolic subgroup $Q\subsetneq P$ and every Borel subset $U\subseteq X$ of positive measure, one has $\rho(\calg_{|U})\nsubseteq Q$.
\end{enumerate}
A parabolic subgroup $P$ such that $(\calg,\rho)$ is tightly $P$-supported, if it exists, is unique. The following lemma records the contents of \cite[Lemma~3.7 and Remark~3.9]{HH2}. 

\begin{lemma}\label{lemma:support-exists}
Let $G=G_\Gamma$ be a right-angled Artin group, let $\calg$ be a measured groupoid over a standard finite measure space $X$, and let $\rho:\calg\to G$ be a strict cocycle.

Then there exists a partition $X=\dunion_{i\in I}X_i$ into at most countably many Borel subsets, and for every $i\in I$, a parabolic subgroup $P_i$, such that $(\calg_{|X_i},\rho)$ is tightly $P_i$-supported. 
\end{lemma}

The following is a consequence of Lemma~\ref{lemma:normalizer} and \cite[Lemma~3.8 and Remark~3.9]{HH2}.

\begin{lemma}\label{lemma:support-invariant-normal}
	Let $G=G_\Gamma$ be a right-angled Artin group, let $\calg$ be a measured groupoid over a standard finite measure space $X$, and let $\rho:\calg\to G$ be a strict cocycle. Let $\calh$ and $\calh'$ be two measured subgroupoids of $\calg$. Assume that $(\calh,\rho)$ is tightly $P$-supported for a parabolic subgroup $P$. Assume also that $\calh$ is normalized by $\calh'$.

	Then there exists a conull Borel subset $X^*\subseteq X$ such that $\rho(\calh'_{|X^*})\subseteq P\times P^\perp$. \qed
\end{lemma}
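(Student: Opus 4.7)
The plan is to combine Lemma~\ref{lemma:normalizer}, which identifies the normalizer of $P$ in $G$ with $P\times P^\perp$, with a bisection-by-bisection Borel argument that exploits tight $P$-support of $(\calh,\rho)$. First, by the definition of normalization, there is a conull Borel subset $X_0\subseteq X$ and countably many bisections $(B_n)_{n\in\mathbb N}$ covering $\calh'_{|X_0}$ such that conjugation by elements of $B_n$ preserves $\calh$. Since $G$ is countable and $\rho$ is Borel, each $B_n$ further decomposes as $\sqcup_{\gamma\in G}(B_n\cap\rho^{-1}(\gamma))$ into countably many Borel bisections on which $\rho$ is constant. After reindexing I may assume $\rho\equiv\gamma_n$ on $B_n$, and it suffices to prove $\gamma_n\in P\times P^\perp$ whenever $U_n:=s(B_n)$ has positive measure.

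For such an $n$, consider any $h\in\calh$ with $s(h),r(h)\in U_n$, and let $g_1,g_2\in B_n$ be the unique elements with $s(g_1)=s(h)$ and $s(g_2)=r(h)$. Normalization yields $g_2hg_1^{-1}\in\calh$; applying $\rho$ gives $\gamma_n\rho(h)\gamma_n^{-1}\in P$. So on a conull subset of $U_n$,
\[\rho(\calh_{|U_n})\;\subseteq\; P\cap \gamma_n^{-1}P\gamma_n.\]
The right-hand side is a parabolic subgroup of $G$ contained in $P$ (intersections of parabolics are parabolic, e.g.\ by \cite[Proposition~2.6]{DKR}), hence a parabolic subgroup of $P$. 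If it were proper in $P$, tight $P$-support of $(\calh,\rho)$ would be violated on the positive-measure set $U_n$; therefore $P\cap \gamma_n^{-1}P\gamma_n=P$, giving $P\subseteq \gamma_n^{-1}P\gamma_n$. The set $B_n^{-1}$ is also a bisection of $\calh'$, with constant $\rho$-value $\gamma_n^{-1}$ and source $r(B_n)$ of positive measure by quasi-invariance; repeating the argument yields $P\subseteq \gamma_n P\gamma_n^{-1}$. Thus $\gamma_n P\gamma_n^{-1}=P$, so $\gamma_n\in P\times P^\perp$ by Lemma~\ref{lemma:normalizer}. Assembling the countably many bisections and discarding the countable union of null sets yields a conull $X^*\subseteq X_0$ with $\rho(\calh'_{|X^*})\subseteq P\times P^\perp$.

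The main obstacle is the Borel bookkeeping around tightness: one must verify that discarding null sets along the way does not interfere with the positive-measure hypothesis required to invoke the tight $P$-support condition, and that the refinement of $B_n$ according to the value of $\rho$ is compatible with the normalization condition (which is phrased relative to the original bisections). This is precisely the technical content extracted from \cite[Lemma~3.8 and Remark~3.9]{HH2}, whose key point is that for any proper parabolic $Q\subsetneq P$, the inclusion $\rho(\calh_{|U})\subseteq Q$ cannot hold on any positive-measure Borel subset $U\subseteq X$.
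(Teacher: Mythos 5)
Your proof is correct and fills in the details that the paper delegates to the cited Lemma~3.8 and Remark~3.9 of \cite{HH2} (the paper itself states this lemma with an immediate \qed and no written argument). Your decomposition into countably many $\rho$-level bisections, the use of tight $P$-support to rule out $P\cap\gamma_n^{-1}P\gamma_n\subsetneq P$, and the final appeal to Lemma~\ref{lemma:normalizer} are exactly the ingredients one expects, and the Borel bookkeeping you flag in the last paragraph (intersecting the conull set from tight support with the conull set from normalization, and further with the preimage of that set under the range map on each bisection) is straightforward.

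One small imprecision worth tightening: you invoke ``$B_n^{-1}$'' as if it inherited the normalization property verbatim, but the definition of normalization gives you a fixed covering family $(B_n)$ and says nothing directly about their inverses. What you actually have is the ``if and only if'' in the definition: for $g_1,g_2\in B_n$ and $h_0$ with $g_2h_0g_1^{-1}$ defined, $h_0\in\calh\Leftrightarrow g_2h_0g_1^{-1}\in\calh$. Applying the ``only if'' direction with $h_0=g_2^{-1}h'g_1$ (so that $g_2h_0g_1^{-1}=h'\in\calh$) gives $\gamma_n^{-1}\rho(h')\gamma_n\in P$ for $h'\in\calh_{|r(B_n)}$, which is what you want. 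Alternatively, the second inclusion is not even needed: once you know $\gamma_nP\gamma_n^{-1}\subseteq P$, the fact recalled after \cite[Proposition~2.2]{charney2007automorphisms} (that $gG_{\Lambda_1}g^{-1}\subseteq G_{\Lambda_2}$ forces $\Lambda_1\subseteq\Lambda_2$ and $gG_{\Lambda_1}g^{-1}$ is $G_{\Lambda_2}$-conjugate to $G_{\Lambda_1}$) already forces $\gamma_nP\gamma_n^{-1}=P$, so $\gamma_n$ normalizes $P$ and Lemma~\ref{lemma:normalizer} applies directly.
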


We now establish a lemma which essentially follows from \cite{HH2}.

\begin{lemma}\label{lemma:support-of-normal-amenable}
	Let $G=G_\Gamma$ be a right-angled Artin group. Let $\calg$ be a measured groupoid over a standard finite measure space $X$ and let $\rho:\calg\to G$ be a strict cocycle with trivial kernel. Let $\calh$ be a measured subgroupoid of $\calg$. Assume that $\calh$ is everywhere nonamenable and stably normalizes an amenable subgroupoid $\cala$.

	Then there exist a conull Borel subset $X^*\subseteq X$, a partition $X^*=\dunion_{i\in I}X_i$ into at most countably many Borel subsets of positive measure, and for every $i\in I$, a parabolic subgroup $P_i$ such that
	\begin{enumerate}
		\item $\cala_{|X_i}\subseteq\rho^{-1}(P_i)_{|X_i}$,
		\item $\calh_{|X_i}\subseteq\rho^{-1}(P_i\times P_i^{\perp})_{|X_i}$,
		\item $(\calh\cap\rho^{-1}(P_i))_{|X_i}$ is amenable,
		\item $P_i^{\perp}$ is nonabelian.
	\end{enumerate} 
\end{lemma}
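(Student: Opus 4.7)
The plan is to construct the partition $X^* = \dunion_{i \in I} X_i$ and the parabolic subgroups $P_i$ in three successive refinement steps, starting from the parabolic support decomposition recalled just before the statement.

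\textbf{Steps 1 and 2.} First, by the parabolic support decomposition from \cite{HH2} (Lemma~3.7 and Remark~3.9 of that paper, as recalled just above the statement), there exist a conull Borel subset of $X$, which I still denote $X$, and a countable partition $X = \dunion_{i \in I} X_i$ together with parabolic subgroups $P_i \subseteq G$ such that $(\cala_{|X_i}, \rho)$ is tightly $P_i$-supported for every $i\in I$. This already gives condition~(1). Using that $\calh$ stably normalizes $\cala$, I may refine the partition further (still countably) so that $\calh_{|X_i}$ normalizes $\cala_{|X_i}$ on each piece. Lemma~\ref{lemma:support-invariant-normal} then produces a further conull Borel subset on which $\rho(\calh_{|X_i}) \subseteq P_i \times P_i^\perp$, which is condition~(2).

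\textbf{Step 3 (amenability of $\calk_i$).} The heart of the argument, and the main obstacle, is verifying condition~(3), namely that $\calk_i := (\calh \cap \rho^{-1}(P_i))_{|X_i}$ is amenable. The natural strategy is a maximality argument: before running Steps~1 and~2, I would first enlarge $\cala$ to a subgroupoid $\tilde\cala \supseteq \cala$, still stably normalized by $\calh$, which is maximal among amenable subgroupoids with these properties. The existence of such a $\tilde\cala$ is itself a technical point, since amenability of measured groupoids is not closed under arbitrary joins; the available tool from the excerpt is stability under countable stabilization. Once $\tilde\cala$ is at hand, running Steps~1 and~2 on $\tilde\cala$ in place of $\cala$ still gives~(1) because $\cala\subseteq\tilde\cala$. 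Now $\calk_i$ is a subgroupoid of $\calh$, so it normalizes $\tilde\cala_{|X_i}$ and has cocycle in $P_i$. The join $\langle \tilde\cala_{|X_i}, \calk_i \rangle$ then normalizes $\tilde\cala_{|X_i}$ and still has cocycle in $P_i$; maximality of $\tilde\cala$ forces $\calk_i \subseteq \tilde\cala_{|X_i}$, whence $\calk_i$ is amenable — provided one can verify that this join is itself amenable. Establishing this amenability of the join is the delicate step, and is expected to follow from structural lemmas about amenable subgroupoids with cocycle in a parabolic subgroup that normalize a tightly-supported amenable subgroupoid, as developed in \cite{HH2}.

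\textbf{Step 4 (nonabelian complement).} Finally, composing the restriction of $\rho$ to $\calh_{|X_i}$ with the canonical projection $\pi_i : P_i \times P_i^\perp \to P_i^\perp$ yields a strict cocycle from $\calh_{|X_i}$ to $P_i^\perp$ whose kernel is exactly $\calk_i$, which is amenable by Step~3. Assuming toward a contradiction that $P_i^\perp$ is abelian, hence amenable, the standard fact that amenability of measured groupoids is preserved under amenable-by-amenable extensions would force $\calh_{|X_i}$ itself to be amenable — contradicting the everywhere nonamenability of $\calh$. Hence $P_i^\perp$ is nonabelian, yielding condition~(4) and completing the construction after discarding the null pieces where the previous steps degenerated.
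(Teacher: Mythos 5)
Your Steps~1 and~2 coincide exactly with the paper's proof: you apply the parabolic support decomposition of \cite[Lemma~3.7 and Remark~3.9]{HH2} to $\cala$ to produce the tight supports $P_i$ (giving assertion~(1)), then invoke Lemma~\ref{lemma:support-invariant-normal} together with stable normalization of $\cala$ by $\calh$ to obtain $\calh_{|X_i}\subseteq\rho^{-1}(P_i\times P_i^\perp)_{|X_i}$ (assertion~(2)). Your Step~4 is also correct as a \emph{conditional} argument: once $(\calh\cap\rho^{-1}(P_i))_{|X_i}$ is known to be amenable, composing $\rho$ on $\calh_{|X_i}$ with the projection $P_i\times P_i^\perp\to P_i^\perp$ gives a cocycle with amenable kernel; if $P_i^\perp$ were abelian, hence amenable, the extension theorem for amenability of measured groupoids would force $\calh_{|X_i}$ to be amenable, contradicting everywhere nonamenability of $\calh$. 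This is a legitimate alternative to how the paper derives~(4), but it presupposes~(3).

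The genuine gap is Step~3, and you flag it yourself. The paper closes it in one stroke by citing \cite[Lemma~3.10]{HH2}, which under exactly these hypotheses yields both that $(\calh\cap\rho^{-1}(P_i))_{|X_i}$ is amenable and that $P_i^\perp$ is nonamenable, with no maximality device. Your proposed maximality argument has two unresolved problems. First, the existence of a maximal amenable subgroupoid $\tilde\cala$ stably normalized by $\calh$ would need a Zorn-type argument, and amenability of measured groupoids is not obviously closed under increasing unions of uncountable chains; the only stabilization property recalled in the paper is countable. Second, and more seriously, to conclude from maximality that $(\calh\cap\rho^{-1}(P_i))_{|X_i}\subseteq\tilde\cala_{|X_i}$ you must first know that $\langle \tilde\cala_{|X_i},(\calh\cap\rho^{-1}(P_i))_{|X_i}\rangle$ is amenable, but the natural route to this already presupposes that $(\calh\cap\rho^{-1}(P_i))_{|X_i}$ is amenable, which is precisely what is to be proved. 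The missing structural input is exactly the content of \cite[Lemma~3.10]{HH2}; it does not appear to be replaceable by a soft maximality argument without reproving that lemma.
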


\begin{proof}
 Consider a partition $X=\dunion_{i\in I}X_i$ into at most countably many Borel subsets such that for every $i\in I$, there exists a parabolic subgroup $P_i$ such that $(\cala_{|X_i},\rho)$ is tightly $P_i$-supported  (given by Lemma~\ref{lemma:support-exists}). As $\cala$ is stably normalized by $\calh$, Lemma~\ref{lemma:support-invariant-normal} ensures that up to replacing $X$ by a conull Borel subset and refining the above partition, we can assume that $\calh_{|X_i}\subseteq\rho^{-1}(P_i\times P_i^{\perp})_{|X_i}$ for every $i\in I$. As $\cala$ is stably normalized by $\calh$ which is everywhere nonamenable, and as $\rho$ has trivial kernel, it follows from \cite[Lemma~3.10]{HH2} that $P_i^\perp$ is nonamenable, and $(\calh\cap\rho^{-1}(P_i))_{|X_i}$ is amenable. 
\end{proof}   

\subsection{Recognizing maximal join parabolic subgroupoids}

Given an equivalence relation arising from a probability measure-preserving action of a right-angled Artin group, the following lemma will enable us to recognize subrelations arising from restricting the action to a maximal join parabolic subgroup. Its proof is based on the techniques developed in \cite{HH2}. 

\begin{lemma}\label{lemma:recognize-product}
	Let $G$ be a one-ended nonabelian right-angled Artin group. Let $\calg$ be a measured groupoid over a standard finite measure space $X$, coming with a strict action-type cocycle $\rho:\calg\to G$. Let $\calh$ be a measured subgroupoid of $\calg$. Then the following assertions are equivalent.
	\begin{enumerate}
		\item There exist a conull Borel subset $X^*\subseteq X$ and a partition $X^*=\dunion_{i\in I}X_i$ into at most countably many Borel subsets such that for every $i\in I$, there exists a maximal join parabolic subgroup $P_i$ such that $\calh_{|X_i}=\rho^{-1}(P_i)_{|X_i}$.
		\item The following properties hold:
		\begin{enumerate}
			\item The subgroupoid $\calh$ contains two subgroupoids $\cala,\caln$, where $\cala$ is amenable, of infinite type, and stably normalized by $\caln$, and $\caln$ is everywhere nonamenable and stably normalized by $\calh$.
			\item Whenever $\calh'$ is another measured subgroupoid of $\calg$ satisfying property~(a), if $\calh$ is stably contained in $\calh'$, then they are stably equal.
		\end{enumerate}
	\end{enumerate}
\end{lemma}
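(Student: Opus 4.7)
The plan is to prove each direction separately, leaning on the parabolic-support technology of \cite{HH2} encapsulated here as Lemmas~\ref{lemma:support-invariant-normal} and \ref{lemma:support-of-normal-amenable}, together with Lemmas~\ref{lemma:maximal-not-abelian} and \ref{lemma:join-parabolic} on the combinatorics of join parabolic subgroups. All partitions of the base space will be refined freely to countable common refinements as the argument proceeds.

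For the direction $(1)\Rightarrow (2a)$, I work on each piece $X_i$ where $\calh_{|X_i} = \rho^{-1}(P_i)_{|X_i}$ and write $P_i = P_i^{(1)}\times P_i^{(2)}$ from the join decomposition of $P_i$. Lemma~\ref{lemma:maximal-not-abelian} guarantees that $P_i$ is nonabelian, so up to relabeling $P_i^{(1)}$ is nonabelian and therefore contains $F_2$. Picking any element $z \in P_i^{(2)}\setminus\{1\}$ (which has infinite order since right-angled Artin groups are torsion-free), I set $\caln = \rho^{-1}(P_i^{(1)})_{|X_i}$ and $\cala = \rho^{-1}(\langle z\rangle)_{|X_i}$, which are both subgroupoids of $\calh_{|X_i}$. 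The action-type property of $\rho$ makes $\cala$ amenable and of infinite type, and makes $\caln$ everywhere nonamenable via \cite[Lemma~3.20]{Kid} applied to the restriction of the action-type cocycle to $P_i^{(1)}$. Normality of $P_i^{(1)}$ in $P_i$ gives normalization of $\caln$ by $\calh$, and commutation of $\langle z\rangle$ with $P_i^{(1)}$ gives normalization of $\cala$ by $\caln$; gluing across pieces yields (2a).

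For the direction $(2)\Rightarrow (1)$, I take witnesses $\cala,\caln$ of (a) and apply Lemma~\ref{lemma:support-of-normal-amenable} to the pair: on each piece $X_i$ of a partition, $\cala\subseteq\rho^{-1}(Q_i)$ and $\caln\subseteq\rho^{-1}(Q_i\times Q_i^\perp)$ with $Q_i^\perp$ nonabelian. Refining further so that $\caln_{|X_i}$ is tightly $R_i$-supported for some parabolic $R_i\subseteq Q_i\times Q_i^\perp$, Lemma~\ref{lemma:join-parabolic} tells us $R_i\times R_i^\perp$ is a join parabolic subgroup. Since $\calh$ stably normalizes $\caln$, Lemma~\ref{lemma:support-invariant-normal} forces $\calh_{|X_i}\subseteq\rho^{-1}(R_i\times R_i^\perp)_{|X_i}$ up to a conull subset. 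Letting $\tilde P_i$ be a maximal join parabolic subgroup containing $R_i\times R_i^\perp$, the subgroupoid $\rho^{-1}(\tilde P_i)_{|X_i}$ satisfies (a) by the $(1)\Rightarrow (2a)$ direction just proved, and contains $\calh_{|X_i}$. The maximality hypothesis (2b) then upgrades this to stable equality $\calh_{|X_i} = \rho^{-1}(\tilde P_i)_{|X_i}$, yielding (1).

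To close the loop with $(1)\Rightarrow (2b)$, assume $\calh = \rho^{-1}(P)$ stably with $P$ a maximal join parabolic, and let $\calh'\supseteq\calh$ stably satisfy (a). Rerunning the previous argument on $\calh'$, but halting before invoking its own (2b), produces on each piece a maximal join parabolic $\tilde P$ with $\calh'\subseteq\rho^{-1}(\tilde P)$. Hence $\rho^{-1}(P)\subseteq\rho^{-1}(\tilde P)$ on a positive-measure set, and the action-type property forces $P\subseteq\tilde P$; maximality of both $P$ and $\tilde P$ among join parabolics yields $P = \tilde P$, so $\calh'\subseteq\rho^{-1}(P)=\calh$, giving stable equality. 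The main obstacle is precisely this last inference, namely extracting subgroup containment from containment of preimage subgroupoids. Although action-type only directly guarantees that $\rho^{-1}(\langle g\rangle)$ is of infinite type for each $g\in P$, combined with triviality of the kernel this produces arrows over any positive-measure base mapping to many distinct elements of $\langle g\rangle$, and a Poincar\'e-recurrence style argument in the spirit of \cite{HH2} translates this into the required subgroup containment. Iterating Lemmas~\ref{lemma:support-invariant-normal} and \ref{lemma:support-of-normal-amenable} adds only routine bookkeeping of countable partition refinements.
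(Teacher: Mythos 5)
Your proof follows essentially the same route as the paper's: the same choice of $\cala=\rho^{-1}(\langle z\rangle)$ and $\caln=\rho^{-1}(P_i^{(1)})$ for $(1)\Rightarrow (2a)$, the same intermediate claim (via Lemmas~\ref{lemma:support-of-normal-amenable}, \ref{lemma:support-invariant-normal} and \ref{lemma:join-parabolic}) that a subgroupoid satisfying $(2a)$ stably lies in preimages of join parabolic subgroups, and the same maximality comparison to finish. The structure is sound.

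There is, however, one real gap, and you correctly flag it as ``the main obstacle'' but then misdiagnose its nature. The step you need is: if $\rho^{-1}(P)_{|W}\subseteq\rho^{-1}(Q)_{|W}$ on a positive-measure set $W$, with $P,Q$ parabolic and $\rho$ action-type with trivial kernel, then $P\subseteq Q$. The measure-theoretic input (infinite-type fibres of $\rho^{-1}(\langle g\rangle)$, Poincaré recurrence) only delivers that for each $g\in P$ there is some integer $k\neq 0$ with $g^k\in Q$; it does not and cannot, by itself, upgrade power membership to membership. The upgrade is an algebraic fact about right-angled Artin groups, namely that parabolic subgroups are closed under taking roots, which the paper invokes as \cite[Lemma~6.4]{Min}. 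Without this input, ``a Poincaré-recurrence style argument'' alone does not translate $g^k\in Q$ into $g\in Q$: a priori $Q$ could contain a proper power of $g$ without containing $g$. This is the same reason the root-conjugation property is separately isolated in Lemma~\ref{lemma:chain-commutative} elsewhere in the paper. Once you add the citation to \cite[Lemma~6.4]{Min} (or an equivalent root-closedness statement for parabolics), the argument is complete and matches the paper's.
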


\begin{proof}
In this proof, we fix an identification between $G$ and $G_\Gamma$; parabolic subgroups of $G$ are understood with respect to this identification.

We first prove that $(1)$ implies $(2a)$. For every $i\in I$, the group $P_i$ is nonabelian (Lemma~\ref{lemma:maximal-not-abelian}); hence $P_i$ splits as a direct product $P_i=M_i\times N_i$, where $M_i$ and $N_i$ are infinite parabolic subgroups, and at least one of them (say $N_i$) is nonabelian.  Therefore $N_i$ contains a nonabelian free subgroup. Choose an infinite cyclic subgroup $A_i\subseteq M_i$. Then $A_i$ commutes with $N_i$. The conclusion follows by letting $\cala$ be a measured subgroupoid of $\calg$ such that for every $i\in I$, one has $\cala_{|X_i}=\rho^{-1}(A_i)_{|X_i}$, and letting $\caln$ be such that for every $i\in I$, one has $\caln_{|X_i}=\rho^{-1}(N_i)_{|X_i}$.

We now claim that if a measured subgroupoid $\calh\subseteq\calg$ satisfies $(2a)$, then there exist a conull Borel subset $X^*\subseteq X$, a Borel partition $X^*=\dunion_{i\in I}X_i$ into at most countably many Borel subsets, and for every $i\in I$, a join parabolic subgroup $P_i\subseteq G$ such that $\calh_{|X_i}\subseteq\rho^{-1}(P_i)_{|X_i}$. Once we prove the claim we will explain in the last paragraphs why this suffices to establish the lemma.

By Lemma~\ref{lemma:support-of-normal-amenable}, there exists a conull Borel subset $X^*\subseteq X$ and a partition $X^*=\dunion_{i\in I}X_i$ into at most countably many Borel subsets such that for every $i\in I$, there exists a parabolic subgroup $R_i$ with $R_i^{\perp}$ nonabelian, such that  $\rho(\cala_{|X_i})\subseteq R_i$ and $\rho(\caln_{|X_i})\subseteq R_i\times R_i^\perp$. Notice that $R_i$ is nontrivial because $\cala$ is of infinite type and $\rho$ has trivial kernel. In particular $R_i\times R_i^{\perp}$ is a join parabolic subgroup. 

Up to a further partition, for every $i\in I$, there exists a nontrivial parabolic subgroup $S_i\subseteq R_i\times R_i^{\perp}$ such that $(\caln_{|X_i},\rho)$ is tightly $S_i$-supported (Lemma~\ref{lemma:support-exists}). As $\caln$ is stably normalized by $\calh$, up to a further partition and restriction to a further conull Borel subset of $X$, we can assume that $\rho(\calh_{|X_i})\subseteq S_i\times S_i^{\perp}$ by Lemma~\ref{lemma:support-invariant-normal}. Lemma~\ref{lemma:join-parabolic} ensures that $S_i\times S_i^{\perp}$ is a join parabolic subgroup, which proves our claim.

We have already proved that $(1)$ implies $(2a)$. To see that $(1)$ implies $(2b)$, let $\calh$ be a measured subgroupoid as in $(1)$ (coming with a partition $X^*=\dunion_{i\in I}X_i$ and  maximal join parabolic subgroups $P_i$), and  let $\calh'$ be as in $(2b)$. The above claim ensures that up to passing to a further conull Borel subset and refining the above partition, we can assume that for every $i\in I$, there exists a join parabolic subgroup $Q_i$ such that $\calh'_{|X_i}\subseteq \rho^{-1}(Q_i)_{|X_i}$. As $\calh$ is stably contained in $\calh'$ and $\rho$ is action-type, we deduce that every element of $P_i$ has a power contained in $Q_i$, and therefore $P_i\subseteq Q_i$ by \cite[Lemma~6.4]{Min}. By maximality of $P_i$, we have $P_i=Q_i$, from which it follows that $\calh'$ is stably contained in $\calh$, proving $(2b)$. 

We finally prove that $(2)$ implies $(1)$, so let $\calh$ be as in $(2)$. The above claim shows that there exists a Borel partition $X^*=\dunion_{i\in I}X_i$ of a conull Borel subset into at most countably many subsets such that for every $i\in I$, $\rho(\calh_{|X_i})$ is contained in a join parabolic subgroup $P_i$. The maximality assumption $(2b)$ together with the implication $(1)\Rightarrow (2a)$ implies that $P_i$ is maximal whenever $X_i$ has positive measure. Indeed, if $P_i$ is contained in a join parabolic subgroup $P'_i$, then $\rho^{-1}(P_i)_{|X_i}\subseteq\rho^{-1}(P'_i)_{|X_i}$, so these two subgroupoids would have to be stably equal. As $\rho$ is action-type, this implies that every element of $P'_i$ has a power contained in $P_i$. By \cite[Lemma~6.4]{Min}, it follows that $P_i=P'_i$, which proves the maximality of $P_i$. Using again the  maximality assumption, after passing to a conull subset and a countable partition, we have $\calh_{|X_i}=\rho^{-1}(P_i)_{|X_i}$.   
\end{proof}

A subgroupoid $\calh$ satisfying one of the equivalent conclusions of Lemma~\ref{lemma:recognize-product} will be called a \emph{maximal join subgroupoid} of $\calg$. Lemma~\ref{lemma:recognize-product} ensures that this notion does not depend on the choice of an action-type cocycle from $\calg$ towards a one-ended nonabelian right-angled Artin group. Notice that the  partition that arises in the first assertion of Lemma~\ref{lemma:recognize-product} is not unique (for instance, one can always pass to a further partition), so it is not determined by the pair $(\calh,\rho)$ in any way. However, the map sending any point $x\in Y_i$ to the parabolic subgroup $P_i$ is determined completely, up to changing its value on a null set. We call it the \emph{parabolic map} of $(\calh,\rho)$. We insist that, while being a maximal join parabolic subgroup is a notion that is independent of the action-type cocycle $\rho$, the parabolic map does depend on $\rho$.

\subsection{Recognizing the center of a right-angled Artin group}

\begin{lemma}\label{lem:center}
Let $G$ be a right-angled Artin group, and let $Z$ be the center of $G$. Let $\calg$ be a measured groupoid over a standard finite measure space $X$, coming with a strict action-type cocycle $\rho:\calg\to G$. Let $\calh\subseteq\calg$ be a measured subgroupoid. Then the following statements are equivalent.
\begin{enumerate}
\item There exist a conull Borel subset $X^*\subseteq X$ and a partition $X^*=\dunion_{i\in I}X_i$ into at most countably many Borel subsets such that for every $i\in I$, one has $\calh_{|X_i}=\rho^{-1}(Z)_{|X_i}$.
\item The following properties hold:
\begin{enumerate}
\item the subgroupoid $\calh$ is amenable and stably normalized by $\calg$; 
\item if $\calh'\subseteq\calg$ is another measured subgroupoid of $\calg$ that satisfies property~(a), and if $\calh$ is stably contained in $\calh'$, then $\calh$ is stably equal to $\calh'$. 
\end{enumerate}
\end{enumerate}
\end{lemma}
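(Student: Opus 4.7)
\emph{Proof plan.} The implication $(1) \Rightarrow (2a)$ is direct: the center $Z$ is abelian, hence amenable, so $\rho^{-1}(Z)$ is amenable by the fact from Section~3.2 that preimages of amenable subgroups under strict cocycles with trivial kernel are amenable; and $Z$ is normal in $G$, so $\rho^{-1}(Z)$ is normalized by $\rho^{-1}(G) = \calg$. The whole lemma then reduces to proving the following single statement, from which $(2) \Rightarrow (1)$ and the maximality clause $(1)\Rightarrow (2b)$ both follow formally by using the maximality condition against $\calh' = \rho^{-1}(Z)$: \emph{every measured subgroupoid $\calk \subseteq \calg$ which is amenable and stably normalized by $\calg$ is stably contained in $\rho^{-1}(Z)$}.

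If $G$ is abelian then $Z = G$ and this is trivial, since $\rho^{-1}(Z) = \calg$. So I assume $G$ is nonabelian; then $G$ contains a nonabelian free subgroup (any nonabelian right-angled Artin group does, witnessed by two nonadjacent vertices), and since $\rho$ is action-type, $\calg$ is everywhere nonamenable by the last fact recalled in Section~3.2. I now apply Lemma~\ref{lemma:support-of-normal-amenable}, taking the lemma's $\calh$ to be $\calg$ and its $\cala$ to be our $\calk$. This produces a conull Borel subset $X^* \subseteq X$, a countable Borel partition $X^* = \dunion_{i \in I} X_i$, and parabolic subgroups $P_i \subseteq G$ such that, for every $i$, one has $\calk_{|X_i} \subseteq \rho^{-1}(P_i)_{|X_i}$, $\calg_{|X_i} \subseteq \rho^{-1}(P_i \times P_i^\perp)_{|X_i}$, and $\rho^{-1}(P_i)_{|X_i}$ is amenable.

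For each $i$ with $\mu(X_i) > 0$, I claim $P_i \subseteq Z$. First, I show $G = P_i \times P_i^\perp$: given any nontrivial $g \in G$, the subgroup $\langle g \rangle$ is infinite (right-angled Artin groups are torsion-free), so action-type of $\rho$ gives, at almost every source in $X_i$, infinitely many elements of $\rho^{-1}(\langle g \rangle)_{|X_i}$; triviality of $\ker \rho$ makes their $\rho$-images pairwise distinct, so $\langle g \rangle \cap (P_i \times P_i^\perp)$ is infinite and some $g^k$ with $k \neq 0$ lies in $P_i \times P_i^\perp$, whereupon root-closure of parabolic subgroups \cite[Lemma~6.4]{Min} upgrades this to $g \in P_i \times P_i^\perp$. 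Second, $P_i$ is abelian: if $P_i$ contained a copy of $F_2$, then the restriction $\rho : \rho^{-1}(P_i) \to P_i$ would still be action-type with target containing $F_2$, so $\rho^{-1}(P_i)$ would be everywhere nonamenable, contradicting amenability on $X_i$; hence $P_i$ is an amenable parabolic subgroup of a right-angled Artin group, therefore free abelian. Combining these: $P_i$ is abelian \emph{and} commutes with $P_i^\perp$, so every element of $P_i$ commutes with all of $G = P_i \cdot P_i^\perp$, whence $P_i \subseteq Z(G) = Z$, giving $\calk_{|X_i} \subseteq \rho^{-1}(Z)_{|X_i}$ as required.

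The main obstacle is the first structural step, which packages the interplay of amenability and stable normalization into Lemma~\ref{lemma:support-of-normal-amenable}; once that lemma delivers a parabolic $P_i$ simultaneously controlling the support of $\calk$, the support of $\calg$, and the amenability of $\rho^{-1}(P_i)_{|X_i}$, the remaining identification $P_i \subseteq Z$ is essentially the elementary combinatorial fact that an abelian direct factor of a right-angled Artin group must lie in its center.
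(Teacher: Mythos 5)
Your proposal is correct and follows essentially the same route as the paper's proof: the reduction to the single claim that every amenable subgroupoid stably normalized by $\calg$ is stably contained in $\rho^{-1}(Z)$, the application of Lemma~\ref{lemma:support-of-normal-amenable} with $\cala=\calk$ and $\calh=\calg$, the derivation of $G=P_i\times P_i^{\perp}$ via the action-type/powers argument plus root-closure of parabolics, and the amenability argument forcing $P_i$ abelian are all exactly the steps in the paper. The only cosmetic difference is that you spell out the powers argument a bit more explicitly than the paper does.
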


A \emph{central subgroupoid} of $\calg$ is a subgroupoid $\calh$  satisfying one of the equivalent conclusions of Lemma~\ref{lem:center}. In the context of Lemma~\ref{lem:center}, if a central subgroupoid $\calh$ is not stably trivial, then $Z$ is infinite. The main point of Lemma~\ref{lem:center} is that the notion of central subgroupoid is independent of the choice of an action-type cocycle from $\calg$ towards a right-angled Artin group.

\begin{proof}
The lemma is clear when $G$ is abelian, so we will assume otherwise. In particular $\calg$ is everywhere nonamenable. As usual, we fix an identification between $G$ and $G_\Gamma$; parabolic subgroups are understood with respect to this identification.

We first observe that $(1)$ implies $(2a)$. Indeed, if $\calh$ is a subgroupoid as in $(1)$, then amenability of $Z$ ensures that $\calh$ is amenable (using that $\rho$ has trivial kernel), and the fact that $Z$ is normal in $G$ ensures that $\calh$ is stably normalized by $\calg$. 

We now claim that if $\calh$ satisfies $(2a)$, then there exists a partition $X^*=\dunion_{i\in I}X_i$ of a conull Borel subset $X^*\subseteq X$ into at most countably many Borel subsets such that for every $i\in I$, one has $\calh_{|X_i}\subseteq\rho^{-1}(Z)_{|X_i}$. Together with the maximality assertion~$(2b)$ and the fact that a subgroupoid as in $(1)$ satisfies $(2a)$, this will show that $(2)\Rightarrow (1)$. This claim will also prove that every subgroupoid as in $(1)$ is stably maximal with respect to $(2a)$, showing that $(1)\Rightarrow (2)$.

We are thus left with proving the above claim. By Lemma~\ref{lemma:support-of-normal-amenable}, there exist a conull Borel subset $X^*\subseteq X$, a partition $X^*=\dunion_{i\in I}X_i$ into at most countably many Borel subsets, and for every $i\in I$, a parabolic subgroup $P_i\subseteq G$ (with respect to the chosen standard generating set), such that 
\begin{enumerate}
\item $\calh_{|X_i}\subseteq\rho^{-1}(P_i)_{|X_i}$,
\item $\calg_{|X_i}\subseteq\rho^{-1}(P_i\times P_i^{\perp})_{|X_i}$, 
\item $\rho^{-1}(P_i)_{|X_i}$ is amenable. 
\end{enumerate}
As $\rho$ is action-type, the second point implies that every element of $G$ has a power contained in $P_i\times P_i^{\perp}$, which in turn implies that $G=P_i\times P_i^{\perp}$ by \cite[Lemma~6.4]{Min}. As $\rho$ is action-type and $\rho^{-1}(P_i)_{|X_i}$ is amenable, the parabolic subgroup $P_i$ does not contain any nonabelian free subgroup, so it is abelian. These two facts together imply that $P_i\subseteq Z$, and the first point above completes our proof.  
\end{proof}

\begin{cor}\label{cor:centerless}
Let $G_1,G_2$ be two right-angled Artin groups. Assume that there exists a measured groupoid $\calg$ which admits two action-type cocycles $\rho_1:\calg\to G_1$ and $\rho_2:\calg\to G_2$.

If $G_1$ has trivial center, then $G_2$ has trivial center. 
\end{cor}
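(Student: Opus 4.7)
The plan is a quick proof by contradiction exploiting the fact, highlighted after Lemma \ref{lem:center}, that the notion of central subgroupoid is characterized purely by groupoid-theoretic conditions (amenability, stable normalization by $\calg$, and maximality among subgroupoids with these properties), and therefore does not depend on the choice of action-type cocycle into a right-angled Artin group.

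Suppose for contradiction that the center $Z_2$ of $G_2$ is nontrivial. Since right-angled Artin groups are torsion-free, $Z_2$ is infinite. Set $\calh := \rho_2^{-1}(Z_2)$. Taking the trivial one-piece partition in part~(1) of Lemma~\ref{lem:center} applied to $\rho_2$ shows that $\calh$ is a central subgroupoid of $\calg$; by the cocycle-independence noted above, $\calh$ is also a central subgroupoid with respect to $\rho_1$.

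Applying now the implication $(2)\Rightarrow(1)$ of Lemma~\ref{lem:center} to $\rho_1$ produces a conull Borel subset $X^*\subseteq X$ and a countable partition $X^*=\dunion_{i\in I}X_i$ with $\calh_{|X_i}=\rho_1^{-1}(Z_1)_{|X_i}$, where $Z_1$ is the center of $G_1$. By hypothesis $Z_1=\{1\}$, and since $\rho_1$ is action-type it has trivial kernel, so each $\rho_1^{-1}(Z_1)_{|X_i}$ is the unit subgroupoid; in particular $\calh$ is stably trivial. But $Z_2$ is infinite and $\rho_2$ is action-type, so $\calh=\rho_2^{-1}(Z_2)$ is of infinite type, meaning that over every positive-measure subset it carries infinitely many arrows above almost every base point, contradicting stable triviality. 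Therefore $G_2$ is centerless. No substantive obstacle appears: the whole argument is a direct application of the cocycle-independence built into Lemma~\ref{lem:center}, once one thinks of feeding the candidate $\rho_2^{-1}(Z_2)$ into it.
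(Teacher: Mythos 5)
Your proof is correct and follows essentially the same route as the paper's: you feed $\rho_2^{-1}(Z_2)$ into Lemma~\ref{lem:center} via the implication $(1)\Rightarrow(2)$ for $\rho_2$, then use $(2)\Rightarrow(1)$ for $\rho_1$, and conclude from trivial kernel together with the infinite-type property. The only cosmetic difference is that you argue by contradiction (and spell out explicitly why stable triviality is incompatible with infinite type) whereas the paper phrases it as a contrapositive, reading off that $Z_1$ must be nontrivial; the underlying content is identical.
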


\begin{proof}
 We prove the contrapositive statement, so assume that the center $Z_2$ of $G_2$ is nontrivial. Then $\calz=\rho_2^{-1}(Z_2)$ is a subgroupoid of $\calg$ of infinite type which satisfies assertion~2 from Lemma~\ref{lem:center} (by using the implication $(1)\Rightarrow (2)$ of that lemma, applied to the cocycle $\rho_2$). Using now the implication $(2)\Rightarrow (1)$ from Lemma~\ref{lem:center}, applied to the cocycle $\rho_1$, we deduce that there exists a Borel subset $U\subseteq Y$ of positive measure such that $\rho_1(\calz_{|U})$ is contained in the center $Z_1$ of $G_1$. As $\rho_1$ has trivial kernel and $\calz$ is of infinite type, this implies that $Z_1$ is nontrivial.
\end{proof}

\subsection{Recognizing commuting centers}

\begin{lemma}\label{lemma:recognize-adjacency}
Let $G$ be a one-ended nonabelian right-angled Artin group. Let $\calg$ be a measured groupoid over a standard probability space $X$, and let $\rho:\calg\to G$ be a strict action-type cocycle. Let $\calh,\calh'$ be two maximal join parabolic subgroupoids of $\calg$. Let $X^*\subseteq X$ be a conull Borel subset, and $X^*=\dunion_{i\in I}X_i$ be a partition into at most countably many Borel subsets, such that for every $i\in I$, there exist parabolic subgroups $P_i,P'_i$ of $G$ such that $\calh_{|X_i}=\rho^{-1}(P_i)_{|X_i}$ and $\calh'_{|X_i}=\rho^{-1}(P'_i)_{|X_i}$. Then for every $i\in I$ such that $X_i$ has positive measure, the following assertions are equivalent. 
\begin{enumerate}
\item The centers of $P_i$ and $P'_i$ commute.
\item Given any central subgroupoids $\calz_i\subseteq\calh_{|X_i}$ and $\calz'_i\subseteq\calh'_{|X_i}$ and any Borel subset $U\subseteq X_i$ of positive measure, there exists a Borel subset $V\subseteq U$ of positive measure such that $\langle (\calz_i)_{|V},(\calz'_i)_{|V}\rangle$ is amenable.
\end{enumerate}
\end{lemma}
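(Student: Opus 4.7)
The strategy is to reduce the amenability dichotomy for $\langle \calz_i,\calz'_i\rangle$ to a purely group-theoretic dichotomy for the subgroup $\langle Z_i,Z'_i\rangle\subseteq G$, where $Z_i=Z(P_i)$ and $Z'_i=Z(P'_i)$. The first step is to apply Lemma~\ref{lem:center} to the measured groupoids $\calh_{|X_i}=\rho^{-1}(P_i)_{|X_i}$ and $\calh'_{|X_i}=\rho^{-1}(P'_i)_{|X_i}$, viewed over $X_i$ with the restrictions of $\rho$ (which one checks inherit the action-type property from $\rho$, since $\calh_{|X_i}=\rho^{-1}(P_i)_{|X_i}$ and similarly for $\calh'$). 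This identifies any central subgroupoid $\calz_i$ of $\calh_{|X_i}$, stably, with $\rho^{-1}(Z_i)_{|X_i}$, and likewise $\calz'_i$ with $\rho^{-1}(Z'_i)_{|X_i}$; passing to a common refinement, both identifications hold on a single Borel partition of a conull subset of $X_i$.

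For $(1)\Rightarrow(2)$: If $Z_i, Z'_i$ commute, Lemma~\ref{lemma:commuting-centers} gives $Z_i\cap Z'_i=\{1\}$ with $Z_i\subseteq Z'^\perp_i$ and $Z'_i\subseteq Z_i^\perp$, so $\langle Z_i, Z'_i\rangle\cong Z_i\times Z'_i$ is abelian. Given any positive-measure $U\subseteq X_i$, choose $V\subseteq U$ of positive measure inside a single piece of the common refinement. Then $\langle (\calz_i)_{|V},(\calz'_i)_{|V}\rangle$ is contained in $\rho^{-1}(Z_i\times Z'_i)_{|V}$, which is amenable because $\rho$ has trivial kernel and the target group is amenable (using the preimage-of-amenable fact recalled in the groupoid background section).

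For the contrapositive of $(2)\Rightarrow(1)$: Suppose $Z_i, Z'_i$ do not commute, so there exist $p\in Z_i$ and $q\in Z'_i$ with $[p,q]\neq 1$. By Baudisch's theorem, any two non-commuting elements of a right-angled Artin group generate a free subgroup of rank $2$, hence $\langle p,q\rangle\cong F_2$ sits inside $\langle Z_i, Z'_i\rangle$. Take $\calz_i=\rho^{-1}(Z_i)_{|X_i}$, $\calz'_i=\rho^{-1}(Z'_i)_{|X_i}$, and $U=X_i$. For any positive-measure $V\subseteq U$, endow $\calg^\ast:=\langle (\calz_i)_{|V},(\calz'_i)_{|V}\rangle$ with the restriction $\rho^\ast$ of $\rho$, a cocycle into $\langle Z_i,Z'_i\rangle$ with trivial kernel. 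Verifying that $\rho^\ast$ is action-type, the fact that an action-type cocycle into a group containing $F_2$ makes the groupoid everywhere nonamenable (\cite[Lemma~3.20]{Kid}, as recalled in the background section) yields that $\calg^\ast$ is nonamenable, negating~$(2)$.

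The main technical obstacle is the verification that $\rho^\ast$ is action-type on $\calg^\ast$: one needs, for every infinite subgroup $K\leq\langle Z_i,Z'_i\rangle$, that $(\rho^\ast)^{-1}(K)$ be of infinite type over $V$. The action-type property of $\rho$ on $\calg$ gives this for $\rho^{-1}(K)$ over $X$, but restricting to $V$ and to arrows that are compositions of elements of $(\calz_i)_{|V}\cup(\calz'_i)_{|V}$ (with all intermediate endpoints in $V$) requires decomposing words in $Z_i\cup Z'_i$ into factors realizable on $V$. I expect this to follow from a Poincar\'e-recurrence-style argument combined with a measurable selection of intermediate returns to $V$ under the $Z_i$- and $Z'_i$-actions encoded by $\rho$.
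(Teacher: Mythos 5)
Your proof of $(1)\Rightarrow(2)$ matches the paper's: one identifies the central subgroupoids $\calz_i,\calz'_i$ stably with $\rho^{-1}(Z_i)_{|X_i},\rho^{-1}(Z'_i)_{|X_i}$ via Lemma~\ref{lem:center}, passes to a piece $V$ of a common refinement, and observes that $\langle(\calz_i)_{|V},(\calz'_i)_{|V}\rangle\subseteq\rho^{-1}(\langle Z_i,Z'_i\rangle)_{|V}$ is amenable because $\langle Z_i,Z'_i\rangle$ is abelian and $\rho$ has trivial kernel.

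Your treatment of the converse, however, contains a genuine gap that you yourself flag and leave unresolved: the claim that the restriction $\rho^\ast$ of $\rho$ to $\calg^\ast=\langle(\calz_i)_{|V},(\calz'_i)_{|V}\rangle$ is an action-type cocycle into $\langle Z_i,Z'_i\rangle$. This is exactly where the argument should not go. Being action-type requires $(\rho^\ast)^{-1}(K)$ to be of infinite type for \emph{every} infinite subgroup $K\leq\langle Z_i,Z'_i\rangle$, and since $\calg^\ast$ consists only of arrows that decompose as compositions of $(\calz_i)_{|V}\cup(\calz'_i)_{|V}$-arrows with all intermediate endpoints in $V$, there is no reason to expect this for arbitrary $K$ and arbitrarily small $V$; a Poincar\'e-recurrence argument gives recurrence of a \emph{single} return map, not the joint recurrence of the entire intermediate itinerary for all words in $K$. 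The paper avoids this entirely: after producing infinite cyclic subgroups $A_i\subseteq C_i$, $A'_i\subseteq C'_i$ that generate a rank~$2$ free subgroup (cited to \cite[Theorem~44]{KK}; your appeal to Baudisch is an acceptable alternative), it invokes \cite[Lemma~3.20]{Kid} \emph{directly} to conclude that $\langle\rho^{-1}(A_i)_{|V},\rho^{-1}(A'_i)_{|V}\rangle$ is nonamenable for every positive-measure $V$. That lemma, in the form used, takes as input only that the ambient cocycle $\rho$ is action-type (so that $\rho^{-1}(A_i)_{|V}$ and $\rho^{-1}(A'_i)_{|V}$ are each of infinite type) and that $A_i,A'_i$ generate a free group of rank two; it yields nonamenability of the generated subgroupoid without any claim that the restricted cocycle on that subgroupoid is action-type. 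The informal summary of \cite[Lemma~3.20]{Kid} recalled in the background section (action-type cocycle into a group with $F_2$ implies everywhere nonamenable) is not the version being applied here, and trying to fit your argument into that summary is what creates the apparent need to verify action-type on $\calg^\ast$. Replace that verification with the direct citation and the proof closes.
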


In the sequel, when two maximal join parabolic subgroupoids of $\calg$ satisfy one of the equivalent conditions of Lemma~\ref{lemma:recognize-adjacency} for every $i\in I$, we say that they are \emph{center-commuting} (notice that this notion does not depend of the choice of a partition as in the statement).

\begin{proof} 
Let $i\in I$ be such that $X_i$ has positive measure, and let $C_i,C'_i$ be the respective centers of $P_i,P'_i$. Let $\hat\calz_i=\rho^{-1}(C_i)_{|X_i}$ and $\hat\calz'_i=\rho^{-1}(C'_i)_{|X_i}$. Notice that  $\calh_{|X_i}$ and $\calh'_{|X_i}$ admit strict action-type cocycles towards $P_i,$ and $P'_i$, respectively. Therefore, Lemma~\ref{lem:center} ensures that  $\hat\calz_i$ and $\hat\calz'_i$ are central subgroupoids of $\calh_{|X_i}$ and $\calh'_{|X_i}$, respectively, and conversely, every central subgroupoid of $\calh_{|X_i}$ or $\calh'_{|X_i}$ is stably equal to $\hat\calz_i$ or $\hat\calz'_i$, respectively. 

Assuming that $(1)$ holds, the group $\langle C_i,C'_i\rangle$ is abelian. Let $\calz_i$ and $\calz'_i$ be central subgroupoids of $\calh_{|X_i}$ and $\calh'_{|X_i}$, respectively. Let $U\subseteq X_i$ be a Borel subset of positive measure, and let $V\subseteq U$ be a Borel subset of positive measure such that $(\calz_i)_{|V}=(\hat\calz_i)_{|V}$ and $(\calz'_i)_{|V}=(\hat\calz'_i)_{|V}$. Then $\langle(\calz_i)_{|V},(\calz'_i)_{|V}\rangle\subseteq\rho^{-1}(\langle C_i,C'_i\rangle)_{|V}$ is amenable (as $\rho$ has trivial kernel). It follows that $(2)$ holds.

Assuming that $(1)$ fails, there exist infinite cyclic subgroups $A_i\subseteq C_i$ and $A'_i\subseteq C'_i$ that together generate a rank $2$ free group: this follows for instance from \cite[Theorem~44]{KK}. Let $V\subseteq X_i$ be any Borel subset of positive measure. It follows from \cite[Lemma~3.20]{Kid} that $\langle \rho^{-1}(A_i)_{|V},\rho^{-1}(A'_i)_{|V}\rangle$ is nonamenable. Therefore $\langle (\hat\calz_i)_{|V},(\hat\calz'_i)_{|V}\rangle$ is nonamenable, showing that $(2)$ fails. 
\end{proof}

\section{Strong rigidity}

In this section, we prove Theorem~\ref{theointro:strong-rigidity}. As explained in the introduction, the $W^*$-rigidity statement follows from the orbit equivalence rigidity statement via \cite[Theorem~1.2 and Remark~1.3]{PV2}, see the argument in the proof of \cite[Corollary~3.20]{HH2} for details. We will therefore focus on the orbit equivalence rigidity statement. Our proof distinguishes two cases, regarding whether or not $G$ contains a maximal join parabolic subgroup with trivial center. Theorem~\ref{theointro:strong-rigidity} is the combination of Propositions~\ref{theo:join-case} and~\ref{theo:coned-case} below.

\subsection{The case where some maximal join parabolic subgroup has trivial center}

We first prove Theorem~\ref{theointro:strong-rigidity} in the case where $G$ contains a maximal join parabolic subgroup with trivial center.

\begin{prop}\label{theo:join-case}
Let $G,H$ be two one-ended right-angled Artin groups. Assume that some maximal join parabolic subgroup of $G$ has trivial center.

Let $G\actson X$ and $H\actson Y$ be two free, irreducible, measure-preserving actions on standard probability spaces. If the actions $G\actson X$ and $H\actson Y$ are stably orbit equivalent (through a stable orbit equivalence $f:U\to V$ between positive measure Borel subsets $U\subseteq X$ and $V\subseteq Y$), then $\kappa(f)=1$, any SOE cocycle associated to $f$ is cohomologous to a group isomorphism, and the actions are actually conjugate through a group isomorphism $\alpha:G\to H$. 
\end{prop}

\begin{proof}
Throughout the proof, we will always identify $G,H$ with right-angled Artin groups $G_\Gamma,G_\Lambda$, in such a way that through these identifications, all standard generators act ergodically on $X,Y$. All parabolic subgroups will be understood with respect to these identifications. 

The groupoid $\calg=(G\ltimes X)_{|U}$ is naturally isomorphic (via $f$) to $(H\ltimes Y)_{|V}$ (after renormalizing the measures on $U$ and $V$). So $\calg$ comes equipped with two action-type cocycles $\rho_G:\calg\to G$ and $\rho_H:\calg\to H$. 

Let $P\subseteq G$ be a maximal join parabolic subgroup with trivial center, and let $\calp=\rho_G^{-1}(P)$. Then $\calp$ satisfies Assertion~2 from Lemma~\ref{lemma:recognize-product}, as it follows from applying this lemma to the cocycle $\rho_G$. Using now the implication $(2)\Rightarrow (1)$ from Lemma~\ref{lemma:recognize-product}, applied to the cocycle $\rho_H$, we see that $\calp$ also satisfies Assertion~1 with respect to $\rho_H$. In particular, there exist a Borel subset $W\subseteq U$ of positive measure and a maximal join parabolic subgroup $Q\subseteq H$ such that $\calp_{|W}=\rho_H^{-1}(Q)_{|f(W)}$. In particular $\calp_{|W}$ comes equipped with two action-type cocycles towards $P$ and $Q$. As $P$ has trivial center, Corollary~\ref{cor:centerless} ensures that $Q$ also has trivial center.

Let $c:G\times X\to H$ be an SOE cocycle associated to $f_{|W}$. Notice that $c$ is also an SOE cocycle associated to $f$, and any two such cocycles are cohomologous. So if we prove that $c$ is cohomologous to a group isomorphism, then the same is true of any SOE cocycle associated to $f$. Notice also that $\kappa(f_{|W})=\kappa(f)$.

The actions $P\actson X$ and $Q\actson Y$ are ergodic (by our irreducibility assumption), and the above ensures that for almost every $x\in W$, one has $f((P\cdot x)\cap W)=(Q\cdot f(x))\cap f(W)$. So $c$ is cohomologous to a cocycle $c'$ such that $c'_{|P\times X}$ is an SOE cocycle associated to $f_{|W}$ for the stable orbit equivalence between $P\actson X$ and $Q\actson Y$ (in particular  $c'(P\times X^*)\subseteq Q$ for some conull Borel subset $X^*\subseteq X$). The groups $P$ and $Q$ are join parabolic subgroups with trivial center, so they split as direct products $P=P_{1}\times\dots\times P_{k}$ and $Q=Q_1\times\dots\times Q_\ell$ of at least two nonabelian parabolic subgroups that do not admit any nontrivial product decomposition. All subgroups $P_i$ and $Q_j$ belong to Monod and Shalom's class $\Creg$, so Theorem~\ref{theo:monod-shalom-product} ensures that $\kappa(f_{|W})=1$ (so $\kappa(f)=1$) and that $c'_{|P\times X}$ is cohomologous to a group isomorphism $\alpha:P\to Q$.  As $P$ contains a conjugate of a standard generator of $G$, Lemma~\ref{lemma:exploting-commuting-gensets} ensures that $c$ is cohomologous to a group isomorphism $\alpha:G\to H$, and the actions $G\actson X$ and $H\actson Y$ are conjugate through $\alpha$. As already mentioned in the previous paragraph, this is enough to conclude.
\end{proof}

\subsection{The case where every maximal join parabolic subgroup has a nontrivial center}

We now prove Theorem~\ref{theointro:strong-rigidity} when every maximal join parabolic subgroup of $G$ has a nontrivial center.

\begin{prop}\label{theo:coned-case}
Let $G,H$ be two one-ended right-angled Artin groups  with trivial center. Assume that every maximal join parabolic subgroup of $G$ has a nontrivial center.

Let $G\actson (X,\mu)$ and $H\actson (Y,\nu)$ be two free, irreducible, measure-preserving actions on standard probability spaces. If the actions $G\actson X$ and $H\actson Y$ are stably orbit equivalent (through a stable orbit equivalence $f:U\to V$ between positive measure Borel subsets $U\subseteq X$ and $V\subseteq Y$), then $\kappa(f)=1$, every SOE cocycle associated to $f$ is cohomologous to a group isomorphism, and the actions are actually conjugate through a group isomorphism $\alpha:G\to H$. 
\end{prop}

\begin{proof} 
As in our previous proof, we will always identify $G,H$ with right-angled Artin groups $G_\Gamma,G_\Lambda$, in such a way that through these identifications, all standard generators act ergodically on $X,Y$. All parabolic subgroups will be understood with respect to these identifications. 

Up to exchanging the roles of $G$ and $H$, we will assume without loss of generality that $\kappa(f)\ge 1$. Let $\calg=(G\ltimes X)_{|U}$, which is naturally isomorphic (through $f$) to $(H\ltimes Y)_{|V}$ (after renormalizing the measures on $U$ and $V$). Then $\calg$ comes equipped with two action-type cocycles $\rho_G:\calg\to G$ and $\rho_H:\calg\to H$.

Lemma~\ref{lemma:combinatorics} ensures that there exist two distinct maximal join parabolic subgroups $P_1,P_2\subseteq G$, whose centers $A_1,A_2$ are infinite and commute, and by Lemma~\ref{lemma:commuting-centers} we have $A_1\cap A_2=\{1\}$. 

For every $i\in\{1,2\}$, let $\calp_i=\rho_G^{-1}(P_i)$ and $\cala_i=\rho_G^{-1}(A_i)$. Then $\calp_1$ and $\calp_2$ are two maximal join subgroupoids of $\calg$ which contain a central subgroupoid of infinite type, and are center-commuting. Using Lemmas~\ref{lemma:recognize-product} and~\ref{lemma:recognize-adjacency}, we can therefore find two distinct maximal join parabolic subgroups $Q_1,Q_2\subseteq H$ with commuting infinite centers $B_1,B_2$, and a Borel subset $W\subseteq U$ of positive measure, such that for every $i\in\{1,2\}$, one has $(\calp_i)_{|W}=\rho_H^{-1}(Q_i)_{|f(W)}$. In addition, Lemma~\ref{lem:center} implies that up to replacing $W$ by a positive measure Borel subset, we can assume that $(\cala_i)_{|W}=\rho_H^{-1}(B_i)_{|f(W)}$ for every $i\in\{1,2\}$.

Let $c:G\times X\to H$ be an SOE cocycle associated to  $f_{|W}$.  Up to replacing $X$ by a conull invariant Borel subset, we can (and will) assume that $c$ is chosen so that whenever $(g,x)\in G\times X$ satisfies $x,gx\in W$, then $c(g,x)$ is the unique element $h\in H$ so that $f(gx)=hf(x)$. We will prove that $c$ is cohomologous to a cocycle $c'$ for which there exists a standard generator $s\in G$ such that $c'(s,\cdot)$ is almost everywhere constant. This will be enough to conclude the proof of our proposition in view of Lemma~\ref{lemma:exploting-commuting-gensets} (after observing that $c$ is also an SOE cocycle associated to $f$, and $\kappa(f_{|W})=\kappa(f)$). 

For $i\in\{1,2\}$, we have $(\calp_i)_{|W}=\rho_H^{-1}(Q_i)_{|f(W)}$, and $P_i,Q_i$ act ergodically on $X,Y$ by assumption. So $c$ is cohomologous to a cocycle $c_i$ such that $(c_i)_{|P_i\times X}$ is an SOE cocycle associated to $f_{|W}$ for the stable orbit equivalence between $P_i\actson X$ and $Q_i\actson Y$, and such that $c$ and $c_i$ coincide on all pairs $(g,x)$ with $x,gx\in W$.

By Lemma~\ref{lem:MS} (applied to the ambient groups $P_i,Q_i$ and to the normal subgroups $A_i,B_i$), for every $i\in\{1,2\}$, up to replacing $c_i$ by a cohomologous cocycle and replacing $X$ by a conull $G$-invariant and $H$-invariant Borel subset, we can assume that the following hold: 
\begin{enumerate}
	\item there is a group isomorphism $\bar\alpha_i:P_i/A_i\to Q_i/B_i$ satisfying that for every $g\in P_i$ and every $x\in X$, one has $c_i(g,x)\in\bar\alpha_i(gA_i)$ -- in particular $c_i(A_i\times X)\subseteq B_i$;
	\item $c_i$ coincides with $c$ on all pairs $(g,x)$ with $x,gx\in W$.
\end{enumerate}

Let $r_i:Q_i=B_i\times B_i^{\perp}\to B_i^{\perp}$ be the retraction, and let $c'_i:A_i^{\perp}\times X\to B_i^{\perp}$ be the cocycle defined as $c'_i=(r_i\circ c_i)_{|A_i^{\perp}\times X}$. There are isomorphisms $P_i/A_i\to A_i^{\perp}$ and $Q_i/B_i\to B_i^{\perp}$ (coming from choosing the unique lift). Through these identifications $\bar\alpha_i$ yields an isomorphism $\alpha_i:A_i^{\perp}\to B_i^{\perp}$ such that for every $g\in A_i^{\perp}$ and every $x\in X$, one has $c'_i(g,x)=\alpha_i(g)$.

Recall from Lemma~\ref{lemma:commuting-centers} that for every $i\in\{1,2\}$, we have $A_{3-i}\subseteq A_i^{\perp}$ and $B_{3-i}\subseteq B_i^{\perp}$. We now prove that for every $i\in\{1,2\}$, the isomorphism  $\alpha_i$ restricts to an isomorphism between $A_{3-i}$ and $B_{3-i}$. By symmetry, it suffices prove it for $i=2$. By Poincaré recurrence, for every $g\in A_1$, there exist an integer $n>0$ and $x\in W$ such that $g^nx\in W$. Then $c_2(g^n,x)=c_1(g^n,x)$ (they are both equal to $c(g^n,x)$), and these belong to $B_1$, which is contained in $B_2^{\perp}$. In particular $c_2(g^n,x)=c'_2(g^n,x)$, which in turn equals $\alpha_2(g)^n$ by the above. So $\alpha_2(g)^n\in B_1$, and therefore $\alpha_2(g)\in B_1$ by \cite[Lemma~6.4]{Min}. So $\alpha_2(A_1)\subseteq B_1$. We now show that actually $\alpha_2(A_1)=B_1$. Take $h\in B_1$. Then there exist an integer $m>0$ and $y\in f(W)$ such that $h^my\in f(W)$. As the actions $A_1\actson X$ and $B_1\actson Y$ induce (via $f$) the same orbit equivalence relation on $W$, there exists $g\in A_1$ such that $c_1(g,y)=c_2(g,y)=h^m$. As $h^m\in B_1\subseteq B_2^{\perp}$, we have $c_2(g,y)=c'_2(g,y)$, so $\alpha_2(g)=h^m$. As $\alpha_2:A_2^\perp\to B_2^\perp$ is an isomorphism, there also exists $g_0\in A_2^\perp$ such that $\alpha_2(g_0)=h$,  hence $g^m_0=g\in A_1$. It follows that $g_0\in A_1$ by \cite[Lemma~6.4]{Min}. This proves that $\alpha_2$ restricts to an isomorphism between $A_1$ and $B_1$, as desired. 

For every $i\in\{1,2\}$, we can therefore extend $\alpha_i$ on $A_i$ by defining $(\alpha_i)_{|A_i}=(\alpha_{3-i})_{|A_i}$ (in particular $\alpha_1$ and $\alpha_2$ coincide on $\langle A_1,A_2\rangle$). This yields an isomorphism $\alpha_i:P_i\to Q_i$ such that for every $g\in A_i\cup A_i^{\perp}$ and every $x\in X$, one has $c_i(g,x)\in\alpha_i(g)B_i$. Now, using the cocycle relation and the fact that every element of $P_i$ is a product of the form $hk$ with $h\in A_i$ and $k\in A_i^{\perp}$, we see that $c_i(g,x)\in\alpha_i(g)B_i$ for every $g\in P_i$ and almost every $x\in X$.

Altogether, these show that, for every $i\in\{1,2\}$, we have a measurable map $\varphi_i:X\to H$ and a measurable map $\kappa_i:P_i\times X\to B_i$ such that for every $g\in P_i$ and almost every $x\in X$, one has $c(g,x)=\varphi_i(gx)\alpha_i(g)\kappa_i(g,x)\varphi_i(x)^{-1}$. Up to replacing $X$ by a conull $G$-invariant Borel subset, we will assume that these relations hold for every $g\in G$ and every $x\in X$. Let $X=\dunion_{j\in J}X_j$ be a partition into at most countably many Borel subsets such that for every $j\in J$, the maps $\varphi_1,\varphi_2$ have constant values $\gamma_{1,j},\gamma_{2,j}$ when restricted to $X_j$. 

Let $g\in A_1$ be a nontrivial element. By Poincaré recurrence, for every $j\in J$ such that $X_j$ has positive measure, there exist an integer $k_j>0$ and $x\in X_j$ such that $g^{k_j}x\in X_j$. By observing that $A_1\subseteq P_1\cap P_2$, we can then write $$c(g^{k_j},x)=\gamma_{1,j} \alpha_1(g)^{k_j}\kappa_1(g^{k_j},x)\gamma_{1,j}^{-1}=\gamma_{2,j} \alpha_2(g)^{k_j}\kappa_2(g^{k_j},x)\gamma_{2,j}^{-1}$$ where  $\alpha_1(g)^{k_j}=\alpha_2(g)^{k_j}$ belongs to $B_1\subseteq B_2^{\perp}$, and $\kappa_1(g^{k_j},x)\in B_1$ and $\kappa_2(g^{k_j},x)\in B_2$. Let $r_2:H\to B_2$ be the retraction as in Section~\ref{sec:raag}. As $B_1\subseteq B_2^{\perp}$, we have $r_2(B_1)=\{1\}$. By applying $r_2$ to the above equation, we deduce that $\kappa_2(g^{k_j},x)$ is trivial. Now applying $r_1:H\to B_1$ to the above equation, and using the fact that $\alpha_1(g)=\alpha_2(g)$ and $B_1$ is abelian, we deduce that $\kappa_1(g^{k_j},x)$ is trivial. Therefore $\gamma_{1,j}^{-1}\gamma_{2,j}$ commutes with $\alpha_1(g)^{k_j}$.  

We claim that the centralizer $Z$ of $\alpha_1(g)^{k_j}$ is a join parabolic subgroup. Indeed, let $C_1\subseteq B_1$ be the smallest parabolic subgroup that contains $\alpha_1(g)^{k_j}$. As $B_1$ (whence $C_1$) is abelian, we have $Z=C_1\times C_1^{\perp}$. In addition $C_1$ is nontrivial (because $g$ is nontrivial), and $C_1^\perp$ is also nontrivial because it contains $B_1^{\perp}$. This proves our claim. As $Z$ contains $Q_1=B_1\times B_1^{\perp}$, it follows that $Z=Q_1$ by the maximality of $Q_1$ as a join parabolic subgroup. 

This proves that for almost every $x\in X$, one has $\varphi_2(x)=\varphi_1(x)\eta_1(x)\mu_1(x)$, where $\eta_1(x)\in B_1$ and $\mu_1(x)\in B_1^{\perp}$. Now, for every $g\in A_1$ and every $x\in X$, one has 
\begin{displaymath}
\begin{array}{rl}
c(g,x)&=\varphi_1(gx)\alpha_1(g)\kappa_1(g,x)\varphi_1(x)^{-1}\\
& =\varphi_1(gx)\eta_1(gx)\mu_1(gx)\alpha_1(g)\kappa_2(g,x)\mu_1(x)^{-1}\eta_1(x)^{-1}\varphi_1(x)^{-1},
\end{array}
\end{displaymath}
\noindent and therefore $$\kappa_1(g,x)=\eta_1(gx)\mu_1(gx)\kappa_2(g,x)\mu_1(x)^{-1}\eta_1(x)^{-1}.$$ Retracting to $B_1$ yields $$\kappa_1(g,x)=\eta_1(gx)\eta_1(x)^{-1},$$ and therefore $$c(g,x)=\varphi_1(gx)\eta_1(gx)\alpha_1(g)\eta_1(x)^{-1}\varphi_1(x)^{-1}.$$ This proves that there exists a measurable map $\psi:X\to H$ and a homomorphism $\alpha_1:A_1\to H$ such that for every $g\in A_1$ and every $x\in X$, one has $c(g,x)=\psi(gx)\alpha_1(g)\psi(x)^{-1}$, which concludes our proof.
\end{proof}

\section{Superrigidity}

In this section, we derive Theorem~\ref{theointro:superrigidity} from Theorem~\ref{theointro:strong-rigidity}, using general techniques developed in prior works of Furman \cite{Fur-oe}, Monod and Shalom \cite{MS} and Kida \cite{Kid-oe}.

Let $G$ be a countable group, and let $\calf$ be a collection of subgroups of $G$. One says that a free, ergodic, measure-preserving action of $G$ on a standard probability space $X$ is \emph{$\calf$-ergodic} if every subgroup in $\calf$ acts ergodically on $X$. In our setting, an irreducible action of a right-angled Artin group is an action which is $\calf$-ergodic with respect to the collection of all cyclic subgroups associated to a standard generating set $\calf$.

One says that $(G,\calf)$ is \emph{strongly cocycle-rigid} if given any two stably orbit equivalent $\calf$-ergodic free, ergodic, measure-preserving actions $G\actson X$ and $G\actson Y$ on standard probability spaces, any SOE cocycle $c:G\times X\to G$ is cohomologous to a group isomorphism $\alpha:G\to G$. Notice that Propositions~\ref{theo:join-case} and~\ref{theo:coned-case} imply that if $G$ is a one-ended right-angled Artin group with trivial center, and if $\calf$ is the set of all cyclic subgroups associated to standard generators of $G$ (under an isomorphism between $G$ and some $G_\Gamma$), then $(G,\calf)$ is strongly cocycle-rigid.

Recall that a free, ergodic, measure-preserving action of a countable group $G$ on a standard probability space $(X,\mu)$ is \emph{mildly mixing} if for every Borel subset $A\subseteq X$, and every sequence $(g_n)_{n\in\mathbb{N}}\in G^{\mathbb{N}}$ made of pairwise distinct elements, either $A$ is null or conull, or else $\liminf_{n\to \infty}\mu(g_nA\bigtriangleup A)>0$. This is equivalent to requiring that for every non-singular properly ergodic action of $G$ on a standard probability measure space $Y$, the diagonal $G$-action on $X\times Y$ is ergodic \cite{SW}. Every mildly mixing $G$-action is  $\mathcal F$-ergodic, taking for $\mathcal F$ the collection of all infinite subgroups of $G$.

\begin{theo}
	\label{theo:conj}
Let $G$ be an ICC countable group, and let $\calf$ be a collection of infinite subgroups of $G$. Assume that $(G,\calf)$ is strongly cocycle-rigid. Let $H$ be a countable group. Let $X,Y$ be standard probability spaces, let $G\actson X$ be an $\calf$-ergodic free, ergodic, measure-preserving $G$-action, and let $H\actson Y$ be a free, measure-preserving, mildly mixing $H$-action.

If the actions $G\actson X$ and $H\actson Y$ are stably orbit equivalent, then they are virtually conjugate. 
\end{theo}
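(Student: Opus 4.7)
The plan is to follow the standard Furman--Monod--Shalom--Kida descent from strong cocycle-rigidity to virtual conjugacy via a self-coupling argument. The stable orbit equivalence of $G\actson X$ and $H\actson Y$ encodes a measure equivalence coupling $\Omega$ of $G$ with $H$, i.e.\ a standard Borel space with commuting measure-preserving actions of $G$ and $H$ admitting finite-measure fundamental domains which can be identified, up to renormalization, with $X$ and $Y$ respectively; the SOE cocycle $c:G\times X\to H$ and its inverse $\bar c:H\times Y\to G$ arise from this choice.

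The key move is to form the composition self-coupling $\Sigma:=\Omega\times_H\check\Omega$ of $G$ with itself. This carries two commuting measure-preserving $G$-actions, and taking a fundamental domain for the first $G$-action (identifiable with $X$), the second $G$-action yields a free measure-preserving $G$-action on $X$ that is stably orbit equivalent to $G\actson X$; its SOE cocycle $\gamma:G\times X\to G$ is computable explicitly from $c$ and $\bar c$. To apply strong cocycle-rigidity of $(G,\calf)$ I need this new $G$-action to be $\calf$-ergodic, and this is exactly where the mild mixing of $H\actson Y\simeq\Omega/G$ comes in: for any infinite $F\in\calf$, the $F$-action on $X$ via the first factor is ergodic, and mild mixing of the $H$-action (via the Schmidt--Walters characterization of mild mixing) promotes this to ergodicity of the diagonal $F$-action on $\Omega\times_H\check\Omega$, a standard mild-mixing stability argument in the spirit of \cite[Section~3]{Fur-oe} and \cite[Theorem~2.17]{MS}. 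Consequently $\gamma$ is an SOE cocycle between two $\calf$-ergodic free measure-preserving $G$-actions, and strong cocycle-rigidity gives that $\gamma$ is cohomologous to an automorphism $\alpha\in\Aut(G)$.

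The remaining task is to feed this rigidity of the self-coupling $\Sigma$ back into the original coupling $\Omega$. Here the ICC hypothesis on $G$ is crucial: it rules out measurable inner-automorphism ambiguity when identifying the ``$\Aut(G)$-piece'' of $\Sigma$ coming from $\alpha$ and peeling it off from the rest. Implementing this is the descent carried out in detail in \cite[Section~6]{Fur-oe}, or in Kida's almost-isomorphism framework \cite[Section~6]{Kid-oe}: one produces short exact sequences $1\to F_i\to G_i\to\bar G_i\to 1$ with finite $F_i$, finite-index subgroups $\bar G_i^0\subseteq\bar G_i$, and conjugate actions $\bar G_i^0\actson X_i'$ exhibiting the virtual conjugacy, possibly after incorporating the compression constant $\kappa(f)$. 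The main obstacle is this last step: carefully tracking the inner-automorphism ambiguity and extracting the finite normal subgroups from the self-coupling rigidity, which relies essentially on the ICC hypothesis together with the assumption that every subgroup in $\calf$ is infinite.
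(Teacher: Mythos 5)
Your proposal is correct and follows essentially the same route as the paper: encode the stable orbit equivalence as a measure equivalence coupling, form the self-coupling $\Sigma\times_H H\times_H\check\Sigma$, use mild mixing of $H\actson Y$ (via \cite[Lemma~6.5]{MS}) to propagate $\calf$-ergodicity and freeness to the two quotient $G$-actions, apply strong cocycle-rigidity to untwist the self-coupling SOE cocycle to an automorphism of $G$, and then invoke the Furman--Monod--Shalom--Kida descent machinery (specifically \cite[Theorem~6.1]{Kid} and \cite[Lemma~4.18]{Fur-survey} or \cite[Theorem~1.1]{Kid-oe}) together with the ICC hypothesis to obtain virtual conjugacy. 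The only place your write-up is looser than the paper is in the final descent step, which you flag as the main obstacle but which is handled directly by the cited results once one has the twisted-equivariant map $\Phi:\Omega\to G$.
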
 

\begin{proof}
By \cite[Theorem~3.3]{Fur-oe}, there exists a standard measure space $\Sigma$ equipped with a measure-preserving action of $G\times H$ such that the $G$-action on $X$ is isomorphic to the $G$-action on $H\backslash\Sigma$, and the $H$-action on $Y$ is isomorphic to the $H$-action on $G\backslash\Sigma$ (the space $\Sigma$ is a \emph{measure equivalence coupling} between $G$ and $H$ in the sense of \cite[0.5.E]{Gro}). Let $\Omega$ be the self measure equivalence coupling of $G$ defined by $\Omega=\Sigma\times_{H}H\times_{H}\check{\Sigma}$ (see \cite[Section~2]{Fur-me} for definitions). By definition $\Omega$ comes equipped with a measure-preserving action of $G\times G$; for notational simplicity, we will let $G_{\ell}=G\times\{1\}$ and $G_r=\{1\}\times G$. As the $H$-action on $Y$ is mildly mixing, \cite[Lemma~6.5]{MS} ensures that the actions of $G_\ell$ on $G_r\backslash\Omega$, and of $G_r$ on $G_\ell\backslash\Omega$, are ergodic and $\calf$-ergodic. In addition, the essential freeness of the $G$-action on $H\backslash\Sigma$ ensures that the actions of $G_\ell$ on $G_r\backslash\Omega$ and of $G_r$ on $G_\ell\backslash\Omega$ are essentially free.

We now claim that there exist a Borel map $\Phi:\Omega\to G$ and an automorphism $\rho:G\to G$ such that $\Phi$ is \emph{$\rho$-twisted equivariant}, i.e.\ $\Phi$ is $(G\times G)$-equivariant when $G$ is equipped with the action of $G\times G$ given by $(g_1,g_2)\cdot g=\rho(g_1)gg_2^{-1}$. Let $Z\subseteq\Omega$ be a fundamental domain for the action of $G_r$.  By identifying $Z$ with $G_r\backslash\Omega$, we get an essentially free, ergodic, $\calf$-ergodic, measure-preserving action of $G_\ell$ on $Z$. It follows from \cite[Lemma~3.2]{Fur-oe} that there exist an SOE cocycle $c:G_\ell\times Z\to G_r$ and a $(G_\ell\times G_r)$-equivariant Borel isomorphism $\Omega\to Z\times G_r$, where the action of $G_\ell\times G_r$ on $Z\times G_r$ is given by $(g_1,g_2)\cdot (z,g)=(g_1z,c(g_1,z)gg_2^{-1})$. As the actions of $G$ on $G_\ell\backslash\Omega$ and $G_r\backslash\Omega$ are  $\calf$-ergodic, and $(G,\calf)$ is strongly cocycle-rigid, the cocycle $c$ is cohomologous to a group isomorphism, i.e.\ there exist a group isomorphism $\rho:G_\ell\to G_r$ and a measurable map $\varphi:Z\to G_r$ such that for every $g_1\in G_\ell$ and almost every $z\in Z$, one has  $c(g_1,z)=\varphi(g_1z)\rho(g_1)\varphi(z)^{-1}$. We define  $\Phi(z,g)=\varphi(z)^{-1}g$. Then the equivariance is verified as follows: $\Phi((g_1,g_2)\cdot (z,g))=\Phi(g_1z,c(g_1,z)gg_2^{-1})=\rho(g_1)\varphi(z)^{-1}gg_2^{-1}=(g_1,g_2)\cdot \Phi(g,z)$. This proves our claim.

We can thus apply \cite[Theorem~6.1]{Kid} (or the reasoning on \cite[pp.865-867]{MS}) to get a homomorphism $\alpha:H\to G$ with finite kernel and finite-index image, and an almost $(G\times H)$-equivariant Borel map $\Sigma\to G$, where the action of $G\times H$ on $G$ is via $(g,h)\cdot g'=gg'\alpha(h)^{-1}$. The conclusion then follows  from \cite[Lemma~4.18]{Fur-survey} (alternatively, see the argument from the proof of \cite[Theorem~1.1]{Kid-oe}).
\end{proof}

We can now complete the proof of Theorem~2 from the introduction.

\begin{proof}[Proof of Theorem~\ref{theointro:superrigidity}]
 By definition of irreducibility of the $G$-action on $X$, there exists an isomorphism between a right-angled Artin group $G_\Gamma$ and $G$ such that, letting $\calf$ be the set of all cyclic subgroups of $G$ generated by the images (under this identification) of the standard generators of $G_\Gamma$, the $G$-action on $X$ is $\calf$-ergodic.   Propositions~\ref{theo:join-case} and~\ref{theo:coned-case} ensure that $(G,\calf)$ is strongly cocycle-rigid. In addition $G$ is ICC (Lemma~\ref{lemma:icc}). So Theorem~\ref{theo:conj} applies and yields the orbit equivalence superrigidity statement. The $W^*$-superrigidity statement follows because $L^\infty(X)\rtimes G$ contains a unique virtual Cartan subalgebra up to unitary conjugacy (by \cite[Theorem~1.2 and Remark~1.3]{PV2}, see also the proof of \cite[Corollary~3.20]{HH2}). 
\end{proof} 




\section{$W^*$-rigidity results for Bernoulli actions}
%

In this final section, we establish the $W^*$-rigidity theorem given in Theorem~\ref{theointro:Bernoulli} of the introduction of the paper. We first recall a cocycle superrigidity theorem due to Popa.

\begin{theo}[{Popa \cite[Theorem~1.1]{Pop}}]\label{theo:cocycle-popa}
Let $G$ be a countable group that admits a chain of infinite subgroups $G_0\subseteq G_1\subseteq \dots\subseteq G_n=G$ such that 
\begin{enumerate}
\item for every $k\in\{1,\dots,n\}$, the set of all $g\in G_k$ with $|gG_{k-1}g^{-1}\cap G_{k-1}|=\infty$ generates $G_k$, and 
\item the centralizer of $G_0$ in $G$ is nonamenable.
\end{enumerate}
Let $G\actson X$ be a nontrivial Bernoulli action, and let $H$ be any countable group. Then any cocycle $c:G\times X\to H$ is cohomologous to a group homomorphism. 
\end{theo}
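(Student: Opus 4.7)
The plan is to follow Popa's deformation/rigidity strategy, which splits into an analytic untwisting step for $G_0$ followed by a purely ergodic-theoretic propagation along the chain $G_0\subseteq G_1\subseteq\dots\subseteq G_n=G$. The input on the analytic side is the \emph{s-malleability} of Bernoulli actions: writing $X=X_0^G$ and $\tilde X=(X_0\times X_0)^G$, the diagonal $G$-action on $\tilde X$ admits a continuous one-parameter family of $G$-equivariant measure-preserving automorphisms $(\alpha_t)_{t\in[0,1]}$ with $\alpha_0=\mathrm{id}$ and $\alpha_1$ equal to the coordinate flip. Given a cocycle $c:G\times X\to H$, one views it inside the crossed-product von Neumann algebra $L^\infty(X)\rtimes G$ and considers the deformed cocycles $c_t=c\circ(\mathrm{id}\times\alpha_t)$.

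The first step, which is the analytic heart of the proof, is to untwist $c$ on $G_0$. Here the hypothesis that $C_G(G_0)$ is nonamenable is used in the form of a relative spectral gap: the representation of $G$ on the orthogonal complement of the $G_0$-invariant vectors in $L^2(\tilde X)\ominus L^2(X)$ has a spectral gap when restricted to $C_G(G_0)$, because a nonamenable group cannot have almost invariant vectors in a mixing representation. A standard transference argument using this spectral gap shows that $\|c_t-c\|_2\to 0$ uniformly in $g\in G_0$ as $t\to 0$. Popa's convexity/averaging trick then produces an element $\varphi\in L^\infty(X,H)$ which untwists $c$ on $G_0$: there is a homomorphism $\beta_0:G_0\to H$ such that $c(g,x)=\varphi(gx)\beta_0(g)\varphi(x)^{-1}$ for $g\in G_0$ and a.e.\ $x$.

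The second step is the inductive propagation along the chain. Assume inductively that, after replacing $c$ by a cohomologous cocycle, $c_{|G_{k-1}\times X}$ is equal to a homomorphism $\beta_{k-1}:G_{k-1}\to H$. For any $g\in G_k$ with $K_g:=gG_{k-1}g^{-1}\cap G_{k-1}$ infinite, the subgroup $K_g$ acts ergodically on the Bernoulli base $X$ (every infinite subgroup does). Applying the cocycle identity to elements of the form $k=gk'g^{-1}$ with $k,k'\in K_g$, one sees that the map $x\mapsto c(g,x)$ intertwines two group homomorphisms $K_g\to H$ coming from $\beta_{k-1}$; ergodicity of $K_g$ forces $c(g,\cdot)$ to be a.e.\ constant, hence $\beta_{k-1}$ extends to the subgroup generated by $G_{k-1}$ and those $g$'s. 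By hypothesis these $g$'s generate $G_k$, so the extension is defined on all of $G_k$ and is again a homomorphism. Iterating up to $k=n$ produces the desired $\alpha:G\to H$ with $c$ cohomologous to $\alpha$.

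The main obstacle is clearly the first step: establishing a spectral-gap-driven untwisting at the level of $G_0$ inside the malleable deformation requires the full machinery of Popa's deformation/rigidity theory, including careful control of the position of $L(H)$ inside $L^\infty(\tilde X)\rtimes G$ and the use of averaging on $H$-valued coboundaries. For this step there seems to be no shortcut, and one would quote \cite{Pop} for the details; the subsequent combinatorial/ergodic propagation along the chain is comparatively elementary, relying only on ergodicity of infinite subgroups on Bernoulli bases and on condition~(1).
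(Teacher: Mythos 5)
The paper does not give a proof of this theorem; it is imported directly from Popa \cite[Theorem~1]{Pop} and used as a black box (the only commentary being that the hypothesis on the chain $G_0\subseteq\dots\subseteq G_n$ is precisely wq-normality of $G_0$ in $G$). So there is no proof in the paper to compare against. Your sketch does capture the two-step shape of Popa's argument (analytic untwisting on $G_0$ via malleability and spectral gap, then propagation along the wq-normal chain), but the justification you give for the spectral gap step is wrong: the claim that ``a nonamenable group cannot have almost invariant vectors in a mixing representation'' is false in general --- nonamenability by itself does not prevent a mixing representation from weakly containing the trivial representation. The correct mechanism, and the reason the argument is specific to Bernoulli rather than general mixing actions, is that the Koopman representation of a Bernoulli shift on $L^2(X_0^G)\ominus\mathbb{C}$ is weakly contained in (in fact a multiple of) the left regular representation of $G$; its restriction to $C_G(G_0)$ is then weakly contained in the left regular representation of $C_G(G_0)$, which has spectral gap precisely because $C_G(G_0)$ is nonamenable. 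There are also smaller imprecisions in the propagation step: for $g\in G_k$ the cocycle identity is applied with $k\in K_g:=gG_{k-1}g^{-1}\cap G_{k-1}$ and $g^{-1}kg\in G_{k-1}$ (your ``$k,k'\in K_g$'' is not what is used), and passing from the resulting twisted $g^{-1}K_gg$-equivariance of $c(g,\cdot)$ to its a.e.-constancy is a nontrivial standard lemma (maximal atom of the pushforward measure on $H$, finiteness of the twisted orbit, ergodicity of a finite-index stabilizer), not a one-line consequence of ergodicity. Since the paper only cites Popa here, these are not deviations from the paper's proof, but the spectral-gap justification as you wrote it is a genuine error that would need to be fixed.
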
   

Notice that in the terminology from \cite{Pop}, our assumption precisely says that $G_0$ is \emph{wq-normal} in $G$. 

In particular, if a free, ergodic, probability measure-preserving action $H\actson Y$ on a standard probability space $Y$ is orbit equivalent to $G\actson X$ and if $G$ has no finite nontrivial normal subgroup, then the two actions are conjugate. Likewise, if $G\actson X$ and $H\actson Y$ are stably orbit equivalent, then they are in fact virtually conjugate. 

Theorem~\ref{theo:cocycle-popa} applies to every countable group $G$ which has a finite chain-commuting generating set $S$ whose elements all have infinite order, and such that $S$ contains an element $s_0$ with nonamenable centralizer in $G$. Indeed, this is proved by writing $S=\{s_0,\dots,s_n\}$ in such a way that $s_i$ and $s_{i+1}$ commute for all $i\in\{0,\dots,n-1\}$, letting $G_0=\langle s_0\rangle$, and letting $G_i=\langle G_{i-1},s_i\rangle$ for every $i\in\{1,\dots,n\}$. In particular, it applies to all one-ended nonabelian right-angled Artin groups. For these, $W^*$-superrigidity of all nontrivial Bernoulli actions then follows from the uniqueness of the virtual Cartan subalgebra of $L^\infty(X)\rtimes G$ up to unitary conjugacy. We now present another approach to $W^*$-superrigidity which bypasses Cartan-rigidity (thereby providing new examples), which is a slight variation on \cite[Theorem~10.1]{IPV} (whose notation we now follow for convenience). A free, ergodic, measure-preserving action $G\actson X$ of a countable group on a standard probability space is \emph{$W^*$-superrigid} if for every other free, ergodic, measure-preserving action $H\actson Y$, if $L^\infty(X)\rtimes G\approx L^\infty(Y)\rtimes H$, then the actions $G\actson X$ and $H\actson Y$ are conjugate. 

\begin{theo}[\cite{IPV}]\label{theo:ipv} 
Let $\Gamma$ be an ICC countable group that admits a chain of infinite subgroups $\Gamma_0\subseteq\Gamma_1\subseteq\cdots\subseteq\Gamma_n=\Gamma$ such that for every $k\in\{1,\dots,n\}$, the set of all $g\in\Gamma_k$ with $|g\Gamma_{k-1}g^{-1}\cap\Gamma_{k-1}|=\infty$ generates $\Gamma_k$, and the centralizer of $\Gamma_0$ in $\Gamma$ is nonamenable. 

Then every nontrivial Bernoulli action of $\Gamma$ is $W^*$-superrigid.
\end{theo}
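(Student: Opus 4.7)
We follow closely the scheme of \cite[Theorem~10.1]{IPV}, replacing their cocycle-superrigidity input by Popa's Theorem~\ref{theo:cocycle-popa}. Let $\Gamma$ be as in the statement, let $\Gamma \actson X = X_0^\Gamma$ be a nontrivial Bernoulli action, and suppose we are given an isomorphism $\pi:L^\infty(X) \rtimes \Gamma \to L^\infty(Y) \rtimes \Lambda$, where $\Lambda \actson Y$ is some free, ergodic, probability measure-preserving action. Write $M = L^\infty(X)\rtimes \Gamma$, $A = L^\infty(X)$, and $B = \pi^{-1}(L^\infty(Y))$; let $\{u_g\}_{g\in\Gamma}$ and $\{v_h\}_{h\in\Lambda}$ denote the canonical unitaries coming from the two group measure space decompositions of $M$. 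The aim is to produce a unitary $w \in M$ with $wBw^* = A$; once that is achieved, Theorem~\ref{theo:cocycle-popa} will upgrade the resulting orbit equivalence to a conjugacy of actions.

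\textbf{The comultiplication step.} Define $\Delta: M \to M \bar\otimes M$ by $\Delta(au_g) = au_g \otimes u_g$; this extends to a trace-preserving injective $*$-homomorphism. Re-expressed via the second decomposition $M = B \rtimes \Lambda$, one sees that $\Delta(bv_h) = \Delta(b)(v_h \otimes v_h)$. The heart of \cite[Theorem~10.1]{IPV} is an analysis of the position of $\Delta(B)$ and $\Delta(L\Lambda)$ inside $M \bar\otimes M$: one combines Popa's s-malleable deformation of the Bernoulli action (applied on one tensor factor) with the spectral gap rigidity coming from the nonamenability of the centralizer of $\Gamma_0$, and propagates intertwinings through the wq-normal chain $\Gamma_0\subseteq\cdots\subseteq\Gamma$. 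Via Popa's transversality principle and intertwining-by-bimodules, this yields an intertwining $\Delta(L\Lambda) \prec_{M \bar\otimes M} M \bar\otimes L\Gamma$; a commutant/bicommutant manipulation then forces $\Delta(B) \prec_{M\bar\otimes M} M \bar\otimes A$. From the latter, one extracts a unitary $w \in M$ with $wBw^* = A$.

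\textbf{From unitarily conjugate Cartans to conjugate actions.} After replacing $\pi$ by $\mathrm{Ad}(w)\circ\pi$, we may assume $\pi(L^\infty(Y)) = L^\infty(X)$, so the actions $\Gamma \actson X$ and $\Lambda\actson Y$ are orbit equivalent. Theorem~\ref{theo:cocycle-popa} implies that the associated orbit equivalence cocycle $c:\Gamma\times X\to\Lambda$ is cohomologous to a group homomorphism $\delta:\Gamma\to\Lambda$. Since $\Gamma$ is ICC (and therefore has no nontrivial finite normal subgroup), a standard argument (compare \cite[Lemma~4.7]{Vae}) shows that $\delta$ is an isomorphism which conjugates the two actions.

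\textbf{Main obstacle.} The deformation/rigidity analysis of $\Delta$ is where the real work lies: one must verify that the spectral gap and malleability inputs used in \cite{IPV} remain valid under the weaker hypothesis of a wq-normal chain with nonamenable centralizer of $\Gamma_0$. Since s-malleability of nontrivial Bernoulli actions is built in and the spectral gap arising from a nonamenable centralizer is exactly what our hypothesis provides, the arguments of \cite{IPV} go through with only cosmetic changes; this is the ``slight variation'' mentioned before the statement.
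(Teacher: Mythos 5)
Your high-level strategy --- follow \cite[Theorem~10.1]{IPV}, feeding in Popa's cocycle superrigidity theorem (Theorem~\ref{theo:cocycle-popa}) --- is exactly what the paper does, and the final ``Cartan conjugacy $\Rightarrow$ conjugacy of actions'' step via Theorem~\ref{theo:cocycle-popa} is fine. But the proposal has a setup error and, more importantly, omits the genuinely new part of the argument.

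First, the comultiplication is defined the wrong way around. Your formula $\Delta(au_g)=au_g\otimes u_g$ uses the \emph{known} decomposition $M=A\rtimes\Gamma$; with that definition, $\Delta(L\Gamma)\subseteq L\Gamma\bar\otimes L\Gamma$ holds trivially and the whole analysis is vacuous. In \cite{IPV} (and in the paper's proof) $\Delta$ is built from the \emph{unknown} decomposition, $\Delta(bv_h)=bv_h\otimes v_h$, so that the location of $\Delta(L\Gamma)$ inside $M\bar\otimes M$ carries genuine information. Your follow-up claim ``one sees that $\Delta(bv_h)=\Delta(b)(v_h\otimes v_h)$'' is actually inconsistent with the formula you wrote for $\Delta$.

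Second, the proposal waves away the real work as ``cosmetic changes.'' The paper explicitly isolates the one step of \cite[Theorem~10.1]{IPV} that must be reargued under the weaker wq-normality hypothesis, namely Step~1: producing a unitary $v\in M\bar\otimes M$ with $v\Delta(L\Gamma)v^*\subset L\Gamma\bar\otimes L\Gamma$. The paper's argument is in two genuinely nontrivial stages. (i) Using the nonamenability of the centralizer $C$ of $\Gamma_0$ and \cite[Lemma~10.2(5)]{IPV}, one controls the quasi-normalizer $P$ of $\Delta(L\Gamma_0)$ and obtains a unitary $v$ with $vPv^*\subset L\Gamma\bar\otimes L\Gamma$; at this point one only knows $v\Delta(L\Gamma_0)v^*\subset L\Gamma\bar\otimes L\Gamma$. (ii) One then runs an induction along the chain $\Gamma_0\subseteq\cdots\subseteq\Gamma_n=\Gamma$: for $g\in\Gamma_{k+1}$ with $\Sigma:=g\Gamma_kg^{-1}\cap\Gamma_k$ infinite, the inclusion $L\Gamma\bar\otimes L\Gamma\subset M\bar\otimes M$ is weakly mixing through $R:=v\Delta(L\Sigma)v^*$, and Popa's weak-mixing criterion (\cite[Proposition~6.14]{PV3}) forces $v\Delta(u_g)v^*\in L\Gamma\bar\otimes L\Gamma$; since such $g$ generate $\Gamma_{k+1}$ by wq-normality, the induction closes. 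This propagation mechanism is precisely what lets the weak chain hypothesis replace the stronger hypotheses of \cite{IPV}, and it is absent from your proposal --- ``propagates intertwinings through the wq-normal chain'' names the goal without giving the argument.
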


\begin{proof}
We assume the notation from the proof of \cite[Theorem~10.1]{IPV}. 
Let $\Gamma\curvearrowright X$ be a nontrivial Bernoulli action,
denote $M=L^{\infty}(X)\rtimes \Gamma$ and endow $M$ with its canonical trace $\tau$ and the associated $2$-norm, $\|\cdot\|_2$.
 Let $\pi:L^{\infty}(Y)\rtimes \Lambda\rightarrow M$ be a $*$-isomorphism, for some free, ergodic, measure-preserving action $\Lambda\curvearrowright Y$ of a countable group on a standard probability space. Identify $M=L^{\infty}(Y)\rtimes \Lambda$ via $\pi$. Let $\Delta:M\rightarrow M\overline{\otimes}M$ be the unital $*$-homomorphism given by $\Delta(bv_s)=bv_s\otimes v_s$, for every $b\in L^\infty(Y)$ and $s\in \Lambda$ \cite{PV}. Here, $(v_s)_{s\in \Lambda}$ denote the canonical unitaries used to define $L^\infty(Y)\rtimes \Lambda$. 

Below, we also denote by $(u_g)_{g\in \Gamma}$ the canonical unitaries used to define $L^\infty(X)\rtimes\Gamma$ and by $(\sigma_g)_{g\in\Gamma}$ the Bernoulli action of $\Gamma$ on $L^\infty(X)$.
Additionally, we use the notation $A\prec B$, for von Neumann subalgebras $A,B\subset M\overline{\otimes}M$, to mean that {\it a corner of $A$ embeds into $B$ inside $M\overline{\otimes}M$}  in the sense of Popa \cite{Pop-T} (see also \cite[Definition~2.1]{IPV}).

Since the action $\Gamma\curvearrowright X$ satisfies Popa's cocycle superrigidity theorem (Theorem~\ref{theo:cocycle-popa} above), the proof of \cite[Theorem~10.1]{IPV} shows that in order to derive the conclusion we only need to justify:
\\
\\
\textit{Step 1.} There exists a unitary $v\in M\overline{\otimes}M$ such that $v\Delta(\text{L}\Gamma)v^*\subset \text{L}\Gamma\overline{\otimes}\text{L}\Gamma$.
\\

Denote by $P$ the quasi-normalizer of $\Delta(\text{L}\Gamma_0)$ inside $M\overline{\otimes}M$. Since the centralizer $C$ of $\Gamma_0$ in $\Gamma$ is non-amenable, $\text{L}C$ has no amenable direct summand. By \cite[Lemma~10.2(5)]{IPV} we get that $\Delta(\text{L}C)$ is strongly nonamenable relative to $M\otimes 1$ and $1\otimes M$, and that $\Delta(\text{L}C)\nprec M\overline{\otimes}\text{L}^{\infty}(X)$ and $\Delta(\text{L}C)\nprec \text{L}^{\infty}(X)\overline{\otimes}M$. Using that
$\Delta(\text{L}C)\subset P$,  we derive that $P\nprec M\overline{\otimes}\text{L}^{\infty}(X)$ and $P\nprec \text{L}^{\infty}(X)\overline{\otimes}M$.
The rest of Step 1 in the proof of \cite[Theorem~10.1]{IPV} now  applies to show the existence of a unitary $v\in M\overline{\otimes}M$ such that $vPv^*\subset \text{L}\Gamma\overline{\otimes}\text{L}\Gamma$. 
Thus, we have $v\Delta(\text{L}\Gamma_0)v^*\subset \text{L}\Gamma\overline{\otimes}\text{L}\Gamma$.

We prove by induction that $P_k:=v\Delta(\text{L}\Gamma_k)v^*$ is contained in $\text{L}\Gamma\overline{\otimes}\text{L}\Gamma$ for all $k\in\{0,\dots,n\}$. Assume that $P_k\subset \text{L}\Gamma\overline{\otimes}\text{L}\Gamma$ for some $k$ with $0\leq k\leq n-1$. Let $g\in \Gamma_{k+1}$ such that $\Sigma:=g\Gamma_k g^{-1}\cap\Gamma_k$ is infinite and denote $R=v\Delta(\text{L}\Sigma)v^*$. Since $\Sigma\subset \Gamma_k$ we have that $R\subset \text{L}\Gamma\overline{\otimes}\text{L}\Gamma$.  Since $\Sigma$ is infinite, Step 1 in the proof of \cite[Theorem~10.1]{IPV} shows that $R\nprec M\otimes 1$ and $R\nprec 1\otimes M$. We continue with the following claim:

\vskip 0.1in
{\bf Claim.} The inclusion $\text{L}\Gamma\overline{\otimes}\text{L}\Gamma\subset M\overline{\otimes}M$ is weakly mixing through $R$ in the sense of \cite[Definition~6.13]{PV3}.
\vskip 0.1in

{\it Proof of the claim.}
 Since $R\nprec M\otimes 1$ and $R\nprec 1\otimes M$, by  Fact 1 in the proof of \cite[Theorem 4.3]{IPP} there is a sequence $u_n\in\mathcal U(R)$ such that for all $a,b\in M\overline{\otimes}M$, we have $\|E_{M\otimes 1}(au_nb)\|_2\rightarrow 0$ and $\|E_{1\otimes M}(au_nb)\|_2\rightarrow 0$. As $R\subset\text{L}\Gamma\overline{\otimes}\text{L}\Gamma$, we can write $u_n=\sum_{g,h\in\Gamma}c_{g,h}^n(u_g\otimes u_h)$, where the coefficients $c_{g,h}^n\in\mathbb C$ for every $g,h\in\Gamma$ satisfy $\sum_{g,h\in\Gamma}|c_{g,h}^n|^2=\|u_n\|_2^2=1$. If $g\in\Gamma$, then $\text{E}_{1\otimes M}(u_n(u_g^*\otimes 1))=1\otimes (\sum_{h\in\Gamma}c_{g,h}^nu_h)$ and so  $\sum_{h\in\Gamma}|c_{g,h}^n|^2=\|E_{1\otimes M}(u_n(u_g^*\otimes 1))\|_2^2\rightarrow 0$. Similarly, $\sum_{g\in\Gamma}|c_{g,h}^n|^2\rightarrow 0$, for every $h\in\Gamma$.

 We will show that
$$\|\text{E}_{\text{L}\Gamma\overline{\otimes}\text{L}\Gamma}(xu_ny)\|_2\rightarrow 0, \;\;\text{for every} \;\;x\in (M\overline{\otimes}M)\ominus (\text{L}\Gamma\overline{\otimes}\text{L}\Gamma)\;\;\text{and}\;\; y\in M\overline{\otimes} M.$$
which by \cite[Definition~6.13]{PV3} implies the above claim. Let $(\text{L}^\infty(X))_1$ be the operator norm unit ball of $\text{L}^\infty(X)$, i.e., the set of $a\in\text{L}^\infty(X)$ with $\|a\|\leq 1$.
Then the linear span of $\{(u_g\otimes u_h)(a\otimes b)\mid g,h\in\Gamma, a,b\in (\text{L}^{\infty}(X))_1, \tau(a)\tau(b)=0\}$ is $\|\cdot\|_2$-dense in  
$(M\overline{\otimes}M)\ominus (\text{L}\Gamma\overline{\otimes}\text{L}\Gamma)$ and the linear span of $\{(c\otimes d)(u_k\otimes u_l)\mid k,l\in\Gamma, c,d\in (\text{L}^{\infty}(X))_1\}$ is $\|\cdot\|_2$-dense in  $M\overline{\otimes}M$.
Since $\text{E}_{\text{L}\Gamma\overline{\otimes}\text{L}\Gamma}$ is $\text{L}\Gamma\overline{\otimes}\text{L}\Gamma$-bimodular, for every $v,w\in\text{L}(\Gamma)\overline{\otimes}\text{L}(\Gamma)$ and $a,b,c,d\in \text{L}^\infty(X)$, we have $\text{E}_{\text{L}\Gamma\overline{\otimes}\text{L}\Gamma}(v(a\otimes b)u_n(c\otimes d)w)=v\text{E}_{\text{L}\Gamma\overline{\otimes}\text{L}\Gamma}((a\otimes b)u_n(c\otimes d))w$.
By combining the last two facts we see that it suffices to prove the displayed convergence for every $x$ and $y$ of the form $x=a\otimes b,y=c\otimes d$, where $a,b,c,d\in (\text{L}^\infty(X))_1$ are such that $\tau(a)\tau(b)=0$. 

Assume that $\tau(a)=0$ since the case $\tau(b)=0$ is similar.
Let $\varepsilon>0$. For every $n$ we have $\text{E}_{\text{L}\Gamma\overline{\otimes}\text{L}\Gamma}(xu_ny)
=\sum_{g,h\in\Gamma}c_{g,h}^n\tau(a\sigma_g(c))\tau(b\sigma_h(d))(u_g\otimes u_h)$
and thus $$\|\text{E}_{\text{L}\Gamma\overline{\otimes}\text{L}\Gamma}(xu_ny)\|_2^2=\sum_{g,h\in\Gamma}|c_{g,h}^n|^2|\tau(a\sigma_g(c))|^2|\tau(b\sigma_h(d))|^2.$$
Since the Bernoulli action $(\sigma_g)_{g\in\Gamma}$ is mixing and $\tau(a)=0$, we can find a finite set $F\subset \Gamma$ such that $|\tau(a\sigma_g(c))|\leq\varepsilon$, for every $g\in\Gamma\setminus F$. Since $a,b,c,d\in (\text{L}^\infty(X))_1$  we also have that $|\tau(a\sigma_g(c))|\leq 1$ and $|\tau(b\sigma_h(d))|\leq 1$, for every $g,h\in\Gamma$. Altogether, we get $$\|\text{E}_{\text{L}\Gamma\overline{\otimes}\text{L}\Gamma}(xu_ny)\|_2^2\leq\sum_{g\in F, h\in \Gamma}|c_{g,h}^n|^2+\varepsilon^2\cdot\sum_{g\in \Gamma\setminus F,h\in\Gamma}|c_{g,h}^n|^2.$$ Since $\sum_{h\in\Gamma}|c_{g,h}^n|^2\rightarrow 0$, for every $g\in F$, and $\sum_{g\in \Gamma\setminus F,h\in\Gamma}|c_{g,h}^n|^2\leq\sum_{g,h\in\Gamma}|c_{g,h}^n|^2=1$, it follows that $\limsup_n\|\text{E}_{\text{L}\Gamma\overline{\otimes}\text{L}\Gamma}(xu_ny)\|_2^2\leq\varepsilon^2.$ Since this holds for every $\varepsilon>0$, we conclude that $\|\text{E}_{\text{L}\Gamma\overline{\otimes}\text{L}\Gamma}(xu_ny)\|_2\rightarrow 0$, which proves the above claim. \qed
\\

We can now complete the proof of Theorem \ref{theo:ipv}.
 Since $\Sigma g\subset g\Gamma_k$,  we get that $R(v\Delta(u_g)v^*)\subset (v\Delta(u_g)v^*)P_k\subset (v\Delta(u_g)v^*)(\text{L}\Gamma\overline{\otimes}\text{L}\Gamma)$. 
Using the claim, a weak mixing technique due to Popa (see \cite[Proposition~6.14]{PV3})  implies that $v\Delta(u_g)v^*\in \text{L}\Gamma\overline{\otimes}\text{L}\Gamma$, for every $g\in \Gamma_{k+1}$ such that $\Sigma:=g\Gamma_k g^{-1}\cap\Gamma_k$ is infinite. 
Since the set of such $g\in\Gamma_{k+1}$ generates $\Gamma_{k+1}$ we conclude that $P_{k+1}\subset \text{L}\Gamma\overline{\otimes}\text{L}\Gamma$. 

For $k=n$, we get that $v\Delta(\text{L}\Gamma)v^*\subset \text{L}\Gamma\overline{\otimes}\text{L}\Gamma$, which finishes the proof.
\end{proof}

As above, we record the following consequence, which is Theorem~\ref{theointro:Bernoulli} from the introduction.

\begin{cor}\label{cor:bernoulli-chain-commuting}
Let $G$ be an ICC countable group. Assume that $G$ has a finite chain-commuting generating set consisting of infinite-order elements, one of which has nonamenable centralizer in $G$. 

Then every nontrivial Bernoulli action of $G$ is $W^*$-superrigid. \qed
\end{cor}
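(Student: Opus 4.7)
The plan is to reduce the statement directly to Theorem~\ref{theo:ipv}: it suffices to exhibit a chain of infinite subgroups of $G$ satisfying the two hypotheses of that theorem. The construction is exactly parallel to the one the authors already spell out after Theorem~\ref{theo:cocycle-popa} for applying Popa's cocycle superrigidity to groups with chain-commuting generating sets; only the bookkeeping needs to be verified.

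First, I would reorder the given chain-commuting generating set as $S=\{s_0,s_1,\dots,s_n\}$ so that $s_i$ and $s_{i+1}$ commute for every $i\in\{0,\dots,n-1\}$, and so that $s_0$ is the prescribed element with nonamenable centralizer in $G$. (Reordering is possible because one can re-root the commutation graph at the vertex corresponding to the distinguished generator while keeping a Hamiltonian-type path through it; alternatively, one observes that in a connected graph any vertex can be placed as the start of a depth-first ordering whose consecutive elements remain adjacent, which is all we need.) Then I would define $\Gamma_0=\langle s_0\rangle$ and $\Gamma_k=\langle s_0,s_1,\dots,s_k\rangle$ for $k\in\{1,\dots,n\}$, so that $\Gamma_n=G$. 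Each $\Gamma_k$ is infinite since $s_0$ has infinite order.

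Next I would check the wq-normality condition of Theorem~\ref{theo:ipv}. Fix $k\in\{1,\dots,n\}$. Every generator $s_i$ of $\Gamma_{k-1}$ trivially satisfies $s_i\Gamma_{k-1}s_i^{-1}\cap\Gamma_{k-1}=\Gamma_{k-1}$, which is infinite. For the remaining generator $s_k$ of $\Gamma_k$, the commutation of $s_k$ with $s_{k-1}\in\Gamma_{k-1}$ gives $\langle s_{k-1}\rangle\subseteq s_k\Gamma_{k-1}s_k^{-1}\cap\Gamma_{k-1}$, and this is infinite because $s_{k-1}$ has infinite order by hypothesis. So the set of $g\in\Gamma_k$ with $|g\Gamma_{k-1}g^{-1}\cap\Gamma_{k-1}|=\infty$ contains all the generators of $\Gamma_k$, hence generates $\Gamma_k$. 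The centralizer of $\Gamma_0$ in $G$ is the centralizer of $s_0$, which is nonamenable by assumption. Combined with the ICC hypothesis on $G$ in the statement, all assumptions of Theorem~\ref{theo:ipv} are satisfied, and $W^*$-superrigidity of every nontrivial Bernoulli action of $G$ follows.

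There is essentially no technical obstacle: the deep work is contained in Theorem~\ref{theo:ipv} (and upstream, in Popa's cocycle superrigidity theorem and the Ioana--Popa--Vaes machinery). The only mildly delicate point is the preliminary reordering step, which is justified by the fact that the chain-commuting condition is a graph-connectedness statement and hence lets us begin the enumeration of $S$ at any prescribed vertex.
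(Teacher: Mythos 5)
Your strategy is the same as the paper's: the corollary is stated as an immediate consequence of Theorem~\ref{theo:ipv}, with the required chain $\Gamma_0\subseteq\cdots\subseteq\Gamma_n=G$ built exactly as you describe, following the recipe the authors already give right after Theorem~\ref{theo:cocycle-popa}. The wq-normality computation and the identification of the centralizer of $\Gamma_0$ with the centralizer of $s_0$ are correct and match the intended argument.

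There is, however, one combinatorial misstep in the reordering step that you should fix. Your two proposed justifications — that the commutation graph admits a Hamiltonian path starting at the distinguished vertex, or that in a DFS enumeration consecutive elements remain adjacent — are both false in general. A star graph $K_{1,3}$ with center $s_0$ has no Hamiltonian path at all, and in any DFS ordering of it the last two visited leaves are not adjacent. Fortunately, you do not need consecutive elements $s_{k-1}, s_k$ to commute; the wq-normality check only requires that each $s_k$ (for $k\geq 1$) commute with \emph{some} $s_j$ with $j<k$. This weaker property is automatic for a BFS or DFS enumeration rooted at $s_0$ (each vertex commutes with its tree-parent, which was enumerated earlier), or alternatively one may take a walk in the connected commutation graph starting at $s_0$ that visits every vertex, allowing repetitions, and feed that walk into the chain construction. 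With either fix, the infinite cyclic subgroup $\langle s_j\rangle$ sits inside $s_k\Gamma_{k-1}s_k^{-1}\cap\Gamma_{k-1}$ and the rest of your argument goes through unchanged. (The paper's own one-sentence description of the chain after Theorem~\ref{theo:cocycle-popa} is written a bit loosely in the same way, so you are in good company, but the proof as you have stated it would not compile for a star-shaped commutation graph.)
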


\begin{cor}
Let $S=S_{g,n}$ be a surface obtained from a closed orientable surface of genus $g$ by removing $n$ points. Assume that $3g+n-5\ge 0$. 

Then every nontrivial Bernoulli action of the mapping class group $\Mod(S_{g,n})$ is $W^*$-superrigid.
\end{cor}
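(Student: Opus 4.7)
The plan is to apply Corollary~\ref{cor:bernoulli-chain-commuting} directly, by verifying three hypotheses for $G = \Mod(S_{g,n})$: that $G$ is ICC, that $G$ has a finite chain-commuting generating set made of infinite-order elements, and that one such generator has nonamenable centralizer in $G$. The condition $3g+n-5 \geq 0$ enters each of these three verifications, and corresponds to the regime where the surface is complex enough for standard structural results on $\Mod(S_{g,n})$ to hold.

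For the ICC property, one uses that $\Mod(S_{g,n})$ acts acylindrically on the curve complex, so it is acylindrically hyperbolic; in the given range it has trivial finite radical and trivial center (with the single exception of the closed genus $2$ surface, where the hyperelliptic involution is central and one must first pass to the quotient by the center), so \cite[Theorem~2.35]{DGO} yields the ICC property.

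For the chain-commuting generating set, the plan is to use a Lickorish--Humphries type generating set: $\Mod(S_g)$ (for $g \geq 2$ closed) is generated by Dehn twists $\tau_{c_1}, \dots, \tau_{c_{2g+1}}$ along a chain of curves with $c_i \cap c_{i+1}$ a single point and $c_i \cap c_j = \emptyset$ for $|i-j|\geq 2$. Reordering these as $\tau_{c_1}, \tau_{c_3}, \dots, \tau_{c_{2g+1}}, \tau_{c_2}, \tau_{c_4}, \dots, \tau_{c_{2g}}$ produces a chain-commuting list, since each consecutive pair of curves now has index-difference at least $2$ and is therefore disjoint. For punctured surfaces one enlarges this by Dehn twists around puncture-parallel curves (coming from the Birman exact sequence); these can be chosen disjoint from the chain curves $c_i$ and hence inserted into the chain-commuting list without breaking the property. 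Each $\tau_{c_i}$ has infinite order in $\Mod(S_{g,n})$, so the generating set consists of infinite-order elements.

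For the nonamenable centralizer, pick a non-separating simple closed curve $c$ among the generators so that cutting $S_{g,n}$ along $c$ yields a connected subsurface $\Sigma$ of complexity large enough that $\Mod(\Sigma)$ contains a nonabelian free subgroup (this is possible as soon as $3g+n-5 \geq 0$, by the Tits alternative applied to Dehn twists along two disjoint non-isotopic curves in $\Sigma$ that fill a subsurface). The centralizer of $\tau_c$ in $\Mod(S_{g,n})$ contains the image of $\Mod(\Sigma)$ under the capping homomorphism, and so is nonamenable. The main technical obstacle is establishing the chain-commuting generating set uniformly across the parameter range: the reordering described above handles closed $S_g$ with $g \geq 2$ cleanly, but the punctured low-complexity boundary cases (notably $(g,n)=(0,5)$ and $(1,2)$) require an explicit combinatorial check that a suitable Dehn-twist generating set can be arranged into a chain-commuting sequence on these specific surfaces.
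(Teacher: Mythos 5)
Your overall strategy -- verify the hypotheses of Corollary~\ref{cor:bernoulli-chain-commuting} for $\Mod(S_{g,n})$ -- matches the paper's, and your treatment of the nonamenable centralizer (a twist along a curve $c$ commutes with every mapping class supported in the complement of a neighborhood of $c$, and that complement carries a nonabelian free group whenever $3g+n-5\ge 0$) is essentially the paper's parenthetical remark. But two of your three verifications have genuine problems.

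\emph{The chain-commuting generating set.} The chain you describe does not generate. Dehn twists along a chain of $2g+1$ curves in a closed $S_g$ generate only the hyperelliptic (symmetric) mapping class group, the centralizer of a hyperelliptic involution; for $g\ge 3$ this is a proper, infinite-index subgroup of $\Mod(S_g)$. The actual Humphries generating set has $2g+1$ Dehn twists along curves $c_0,c_1,\dots,c_{2g}$, where $c_1,\dots,c_{2g}$ form a chain and $c_0$ meets exactly one of the $c_i$: the intersection graph is a tree, not a path, so your reordering argument does not apply to it (though its commutation graph is indeed connected). Worse, for punctured surfaces your proposal to ``enlarge by Dehn twists around puncture-parallel curves'' produces nothing, since such twists are isotopically trivial for a surface with punctures rather than boundary. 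A generating set for a punctured mapping class group must include half-twists or braid twists exchanging punctures, which are not Dehn twists; this is exactly what the paper's references to Labru\`ere--Paris \cite[Corollary~2.11]{LP} (for $g\ge 1$) and Wajnryb \cite[Lemma~23]{Waj} (for $g=0$) supply -- Artin-group-type generating sets made of Dehn twists and braid twists, which are chain-commuting and consist of infinite-order elements, and for which the centralizer argument still goes through verbatim.

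\emph{ICC.} Invoking acylindrical hyperbolicity plus \cite[Theorem~2.35]{DGO} is a legitimate alternative to the paper's citation of Kida \cite[Theorem~2.9]{Kid}, and you are right to flag the hyperelliptic involution in $\Mod(S_2)$. But ``passing to the quotient by the center'' changes the group, so it does not prove the corollary as stated for $\Mod(S_2)$; and the same issue arises for $\Mod(S_{1,2})$ (which also has a central hyperelliptic involution and lies in the range $3g+n-5\ge 0$), so it is not a single exception. As written, your argument for ICC identifies an obstruction in these cases without resolving it.
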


\begin{proof}
The fact that mapping class groups are ICC was proved by Kida in \cite[Theorem~2.9]{Kid}. The fact that they admit chain-commuting generating sets as in Corollary~\ref{cor:bernoulli-chain-commuting} follows for instance from \cite[Corollary~2.11]{LP} when $g\ge 1$, and from \cite[Lemma~23]{Waj} when $g=0$ (note that each element $h$ in the generating set is either a Dehn twist or a braid twist, so its centralizer is non-amenable by considering mapping classes supported on a subsurface which is disjoint from the support of $h$). The conclusion thus follows from Corollary~\ref{cor:bernoulli-chain-commuting}.
\end{proof}

Recall that given a finite labeled simple graph $\Gamma$, where every edge is labeled by an integer at least $2$, the \emph{Artin group} $G_\Gamma$ with defining graph $\Gamma$ is the group defined by the following presentation: it has one generator per vertex of $\Gamma$, with a relation $uvu\dots=vuv\dots$ (with $n$ letters on each side) whenever the vertices $u,v$ are joined by an edge labeled $n$ in $\Gamma$. Whenever $\Lambda\subseteq\Gamma$ is a full subgraph (i.e.\ two vertices of $\Lambda$ are adjacent in $\Lambda$ if and only if they are adjacent in $\Gamma$) with the induced labeling, then the natural homomorphism $G_\Lambda\to G_\Gamma$ induced by the inclusion $\Lambda\hookrightarrow\Gamma$ is injective \cite{vdL}.

\begin{cor}\label{cor:Artin}
Let $G$ be a one-ended ICC Artin group. Then every nontrivial Bernoulli action of $G$ is $W^*$-superrigid.  
\end{cor}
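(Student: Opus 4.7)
The plan is to reduce to Corollary~\ref{cor:bernoulli-chain-commuting}: it suffices to exhibit a finite chain-commuting generating set of $G$ consisting of infinite-order elements, one of which has nonamenable centralizer in $G$. Writing $G = G_\Gamma$ for its defining graph $\Gamma$, the ICC hypothesis forces $G$ to be centerless, while one-endedness forces $\Gamma$ to be connected with at least two vertices: a disconnected $\Gamma$ would present $G$ as a nontrivial free product of torsion-free groups, giving infinitely many ends, while a single-vertex $\Gamma$ would give $G=\mathbb{Z}$, which is two-ended.

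The standard generating set $\{s_v : v \in V\Gamma\}$ is generally not chain-commuting, since two standard generators $s_u, s_v$ commute in $G_\Gamma$ only when the edge $uv$ carries label $2$. To remedy this, for each edge $uv \in E\Gamma$ I would adjoin an element $z_{uv}$ of infinite order lying in the center of the dihedral Artin subgroup $\langle s_u, s_v \rangle$; such an element always exists (for instance $z_{uv} = s_u s_v$ when the label is $2$, and a generator of the infinite cyclic center when the label is at least $3$). The augmented set $S = \{s_v\} \cup \{z_{uv}\}$ then generates $G$, consists of infinite-order elements, and its commutation graph contains a subdivision of $\Gamma$, obtained by replacing each edge $uv$ with a length-two path from $s_u$ to $s_v$ through $z_{uv}$. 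Connectedness of $\Gamma$ thus yields chain-commutativity of $S$.

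It remains to produce an element of $S$ with nonamenable centralizer, which I would establish by a case split. If some edge $uv$ carries a label $m \geq 3$, the centralizer of $z_{uv}$ contains the entire dihedral Artin group $\langle s_u, s_v \rangle = A_m$, which contains a nonabelian free subgroup and is in particular nonamenable. If instead all edges have label $2$, then $G$ is a right-angled Artin group; in this case I would invoke the elementary combinatorial lemma that a connected simple graph in which every vertex link is a clique is itself a clique (proved by examining a shortest path between two non-adjacent vertices, the two vertices neighboring the midpoint of this path providing non-adjacent members of its link). Since $G$ is centerless, $\Gamma$ is not a complete graph, so some vertex $v$ has two non-adjacent vertices in its link; then $Z_G(s_v) = \langle s_v \rangle \times G_{\lk(v)}$ contains a nonabelian free subgroup and is nonamenable.

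The main delicate step is the combinatorial argument in the right-angled case, together with checking that it dovetails with the centerless hypothesis; the dihedral case is more direct once one invokes the standard fact that $A_m$ is nonamenable for $m \geq 3$. Once both cases are handled, Corollary~\ref{cor:bernoulli-chain-commuting} concludes the proof.
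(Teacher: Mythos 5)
Your proposal is correct and follows essentially the same route as the paper's proof: reduce to Corollary~\ref{cor:bernoulli-chain-commuting} by augmenting the standard generating set with central elements of dihedral Artin subgroups, then verify the nonamenable-centralizer condition by splitting on whether some edge has label at least $3$ or the group is a right-angled Artin group. The only (cosmetic) differences are that you adjoin $z_{uv}$ for every edge rather than only the edges with label at least $3$, and you spell out the combinatorial lemma (a connected non-complete graph has a vertex whose link is not a clique) that the paper leaves to the reader in the right-angled case; both refinements are correct and dovetail properly with the centerless hypothesis.
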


Many classes of Artin groups are known to be ICC. For instance, this is satisfied whenever $G$ is acylindrically hyperbolic and has no nontrivial finite normal subgroup \cite[Theorem~2.35]{DGO}. See e.g.\ \cite{Cal} and the references therein for the current status of known results regarding acylindrical hyperbolicity of Artin groups. The lack of nontrivial finite normal subgroup would follow from the $K(\pi,1)$-conjecture for Artin groups, see \cite{GP,Paris} for surveys of known cases and \cite{PaS} for the most recent developments. Combining these references, we see that all Artin groups with trivial center and connected defining graph that are either of Euclidean type, 2-dimensional, or of type FC, are ICC and satisfy Corollary~\ref{cor:Artin}.

\begin{proof}
Write $G=G_\Gamma$ for some finite labeled simple graph $\Gamma$. Note that $G$ being one-ended implies that $\Gamma$ is connected (otherwise $G$ is a free product).	

If all edges of $G$ are labeled by $2$, then $G$ is a right-angled Artin group. In this case $\Gamma$ is not a clique (otherwise $G$ is free abelian, whence not ICC). Thus the standard generating set is a chain-commuting generating set such that one generator has a non-amenable centralizer, and the conclusion follows from Corollary~\ref{cor:bernoulli-chain-commuting}.

We now assume that $\Gamma$ has at least one edge with label at least $3$. Whenever $e\subseteq \Gamma$ is an edge with label at least $3$, the edge group $G_e$ is non-amenable and has an infinite cyclic center $Z_e$, see \cite[Section~2]{Cri}. The generating set $S$ of $G$ consisting of the generators associated to the vertices of $\Gamma$, together with a generator of $Z_e$ for every edge $e$ with label at least $3$, satisfies the assumption of Corollary~\ref{cor:bernoulli-chain-commuting}, so the conclusion follows.   
\end{proof}


\footnotesize

\bibliographystyle{alpha}
\bibliography{mixing-4}

\begin{flushleft}
Camille Horbez\\
Universit\'e Paris-Saclay, CNRS,  Laboratoire de math\'ematiques d'Orsay, 91405, Orsay, France \\
\emph{e-mail:~}\texttt{camille.horbez@universite-paris-saclay.fr}\\[4mm]
\end{flushleft}

\begin{flushleft}
Jingyin Huang\\
Department of Mathematics\\
The Ohio State University, 100 Math Tower\\
231 W 18th Ave, Columbus, OH 43210, U.S.\\
\emph{e-mail:~}\texttt{huang.929@osu.edu}\\
\end{flushleft}

\begin{flushleft}
Adrian Ioana\\
Department of Mathematics, University of California San Diego, 9500 Gilman Drive, La Jolla, CA 92093, USA\\
\emph{e-mail:~}\texttt{aioana@ucsd.edu}
\end{flushleft}

\end{document}